\newtheorem{theorem}{Theorem}[subsection]
\newtheorem{definition}[theorem]{Definition}
\newtheorem{remark}[theorem]{Remark}
\newtheorem{proposition}[theorem]{Proposition}
\newtheorem{corollary}[theorem]{Corollary}
\newtheorem{lemma}[theorem]{Lemma}
\newtheorem{example}[theorem]{Example}
\newtheorem{examples}[theorem]{Examples}
\newtheorem{claim}[theorem]{Claim}
\def\Q{\mathbb{Q}}
\def\R{\mathbb{R}}
\def\Z{\mathbb{Z}}
\def\A{\mathbb{A}}
\def\C{\mathbb{C}}
\def\G{\mathbb{G}}
\def\Al{\mathcal{A}}
\def\Fl{\mathcal{FL}}
\def\S{\mathcal{S}}
\def\ch{\mathrm{ch}}
\def\CAff{\mathrm{CAff^{op}}}
\def\Lie{\mathrm{Lie}}
\def\M{\mathrm{M}}
\def\Perf{\mathrm{Perf}}
\def\Sh{\mathrm{Sh}}
\def\GSp{\mathrm{GSp}}
\def\SO{\mathrm{SO}}
\def\GSpin{\mathrm{GSpin}}
\def\Spin{\mathrm{Spin}}
\def\Spa{\mathrm{Spa}}
\def\Spec{\mathrm{Spec}}
\def\Adic{\mathrm{Adic}}
\def\lan{\langle}
\def\ran{\rangle}
\def\lra{\longrightarrow}
\def\ra{\rightarrow}
\def\ov{\overline}
\def\wh{\widehat}
\def\wt{\widetilde}
\def\st{\stackrel}
\def\tr{\textrm}
\begin{document}

\title{Perfectoid Shimura varieties of abelian type}
\author{Xu Shen}
\date{}
\address{Fakult\"at f\"{u}r Mathematik\\
Universit\"at Regensburg\\
Universitaetsstr. 31\\
93040, Regensburg, Germany} \email{xu.shen@mathematik.uni-regensburg.de}

\address{Current address: Morningside Center of Mathematics\\
	No. 55, Zhongguancun East Road\\
	Beijing 100190, China}\email{shen@math.ac.cn}
	
\renewcommand\thefootnote{}
\footnotetext{2010 Mathematics Subject Classification. Primary: 11G18; Secondary: 14G35.}

\renewcommand{\thefootnote}{\arabic{footnote}}
\begin{abstract}
We prove that Shimura varieties of abelian type with infinite level at $p$ are perfectoid. As a corollary, the moduli spaces of polarized K3 surfaces with infinite level at $p$ are also perfectoid.
\end{abstract}

\maketitle
\tableofcontents

\section{Introduction}
\subsection{Background and motivation}
The theory of perfectoid spaces was originally developed by Scholze in \cite{Sch1} to prove some cases of the weight-monodromy conjecture over $p$-adic fields. Since then, this theory has proved to be very useful in quite a lot areas of number theory and arithmetic geometry, cf. \cite{Sch2, Sch3, SW}. In \cite{Sch3}, Scholze has proved that Shimura varieties of Hodge type with infinite level at $p$ are perfectoid, which is the key geometric ingredient of his construction of automorphic Galois representations there. More precisely, in \cite{Sch3} Scholze constructed Galois represenations associated with the mod $p^m$ cohomology of the locally symmetric spaces for $\mathrm{GL}_n$ over a totally real or CM field, proving conjectures of Ash and others on the mod $p$ version of the global Langlands conjecture. The cohomology of the locally symmetric spaces for $\mathrm{GL}_n$ can be realized as a boundary contribution of the cohomology of symplectic or unitary Shimura varieties (which are of Hodge type), and the perfectoid structure on these Shimura varieties with infinite level at $p$ plays a crucial role in understanding the torsion appearing in the cohomology. In this note, we prove that Shimura varieties of abelian type with infinite level at $p$ are also perfectoid. As a corollary, we get many new interesting examples of perfectoid spaces, including perfectoid quaternionic Shimura varieties and the moduli spaces of polarized K3 surfaces with infinite level at $p$.

Shimura varieties of abelian type are exactly those studied by Deligne in \cite{D}, where he proved that the canonical models of these Shimura varieties exist. When the weight is rational, Shimura varieties of abelian type (over characteristic 0) are known as moduli spaces of abelian motives (defined by using absolute Hodge cycles), cf. \cite{Mi2} Theorem 3.31. Recently, Kisin (cf. \cite{Ki}) and Vasiu (cf. \cite{V1, V2}, as well as the more recent \cite{V3,V4}) have proved that integral canonical models for Shimura varieties of abelian type (in the case of hyperspecial levels at $p$) exist. These works and the results of Scholze in \cite{Sch3} are the motivation of this paper. Recall that Shimura varieties of Hodge type are the Shimura varieties which can be realized as closed subspaces of the Siegel Shimura varieties. In principle, Deligne's paper \cite{D} suggests that, if a (reasonable) statement is true for Shimura varieties of Hodge type, then it should also be true for Shimura varieties of abelian type, cf. \cite{MS, Ki, Mo} for examples. However, the situations in \cite{Ki} and \cite{Sch3} are quite different. Let $(G, X)$ be a Shimura datum. Fix a prime $p$. In \cite{Ki}, one fixes the level $K_p$ at $p$ as the hyperspecial level (thus the reductive group is unramified at $p$), and lets the level $K^p$ outside $p$ vary. Equivalently, one studies the pro-variety
\[\Sh_{K_p}(G,X)=\varprojlim_{K^p}\Sh_{K_pK^p}(G,X)\]over the reflex field and the integer ring of the local reflex field. Here, $K^p\subset G(\A_f^p)$ runs through open compact subgroups of $G(\A_f^p)$. In the situation of \cite{Sch3}, one fixes the level $K^p$ outside $p$ and lets the level $K_p$ at $p$ vary. Equivalently, one studies the object \footnote[1]{Since the usual projective limit does not exist in the category of adic spaces: the closest object to this limit is as in Definition \ref{def}. See also Lemma \ref{L:existence}.}
\[\varprojlim_{K_p}\Sh_{K_pK^p}(G,X)^{ad}\] in the pro-\'etale site of the adic Shimura varieties $\Sh_{K_pK^p}(G,X)^{ad}$  over a perfectoid field like $\C_p$. Here, $K_p\subset G(\Q_p)$ runs through open compact subgroups of $G(\Q_p)$. In this paper, we will mainly work with the latter situation. Nevertheless, we will use in the course of the argument (cf. Proposition \ref{P:delta} in subsection 3.2) the case where a hyperspecial level $K_\ell$ is fixed, for a prime $\ell\neq p$, and the level $K^\ell$ varies.

Recall that $(G, X)$ is called of abelian type, if there exists a Shimura datum $(G_1,X_1)$ of Hodge type, together with a central isogeny $G_1^{der}\ra G^{der}$ which induces an isomorphism between the associated adjoint Shimura data $(G_1^{ad},X^{ad}_1)\st{\sim}{\ra}(G^{ad},X^{ad})$. Thus the geometry of Shimura varieties of abelian type and of Hodge type are very closely related. In fact, in \cite{D, Ki, Mo} Deligne, Kisin and Moonen all studied connected Shimura varieties of Hodge type first. Then passing to a compact (finite) quotient, they got connected Shimura varieties of abelian type. By the theory of connected components of Shimura varieties, they got results for the non-connected Shimura varieties of abelian type. In this paper, we adapt their methods and construction to the situation studied in \cite{Sch3}. 

\subsection{Main results and the strategy}
Let $(G, X)$ be a Shimura datum of abelian type. Fix a sufficiently small prime to $p$ level $K^p\subset G(\A_f^p)$. As an orientation,
we first study the connected Shimura variety \[\Sh_{K^p}^0(G,X)=\varprojlim_{K_p}\Sh_{K_pK^p}^0(G,X)\] as a pro-variety over $\C_p$, following the line as in \cite{D, Ki, Mo}.
Let $(G_1, X_1)$ be the Shimura datum of Hodge type as above. Then we can find an open compact subgroup $K_1^p\subset G_1(\A_f^p)$, such that
\[\Sh_{K^p}^0(G,X)=\Sh_{K^p_1}^0(G_1,X_1)/\Delta\] for some \emph{finite} group (cf. Proposition \ref{P:delta} in subsection 3.2) $\Delta$ which acts freely on the connected component $\Sh_{K^p_1}^0(G_1,X_1)$ of $\Sh_{K^p_1}(G_1,X_1)=\varprojlim_{K_{1p}}\Sh_{K_{1p}K^p_1}(G_1,X_1)$. The finiteness of $\Delta$ will be crucial for our later use. Now we want a perfectoid version of this construction. First of all, with some effort, we can show that there is a perfectoid Shimura variety $S_{K^p_1}^0(G_1,X_1)$, which occurs as in some sense a ``connected component'' of the perfectoid Shimura variety $S_{K^p_1}(G_1,X_1)$ constructed by Scholze in \cite{Sch3}. Moreover, it is equipped with a free action of $\Delta$, induced from the algebraic situation. In fact, what we are doing is to adapt the construction of \cite{D} 2.7 to the perfectoid situation. This (natural and not very hard) adaption will also be a key tool in our study of perfectoid Shimura varieties here. Then, we want to define an adic space by taking the quotient on the connected perfectoid Shimura variety $S_{K^p_1}^0(G_1,X_1)$ by $\Delta$:
\[S_{K^p}^0(G, X)=S_{K^p_1}^0(G_1, X_1)/\Delta,\]
which is the adic version of $\Sh_{K^p}^0(G,X)$. 

At this point, some curious reader may wonder that, why do not we first perform the (algebraic) quotients at finite levels, and then take the limit of the associated adic spaces to get a perfectoid space as what we want? If so, in fact one will not need to take quotients at all, since the desired objects at finite levels are given by the connected Shimura varieties $\Sh_{K_pK^p}^0(G,X)$, and it is by the theory of Shimura varieties that we know each $\Sh_{K_pK^p}^0(G,X)$ can be realized as a finite quotient of some suitable $\Sh_{K_{1p}K^p_1}^0(G_1,X_1)$, for details see subsections 3.1 and 3.2. Therefore, if one wants to take this approach, then the only problem is prove the existence of the limit $\varprojlim_{K_p}\Sh_{K_pK^p}^0(G,X)^{ad}$ in the sense of Definition \ref{def}, as a perfectoid space. This is exactly what we want, up to our adaption of Deligne's construction above! Then, one meets an essential difficulty: it is not clear at all that the limit (in the sense of Definition \ref{def}) exists, even as an adic space! See Lemma \ref{L:existence} (and Lemma \ref{L:perfectoid-local}) for some discussion on the existence of limits of adic spaces in some cases.  In fact, even in Lemma \ref{L:existence}, one is asserting something (the existence of a limit in the sense of Definition \ref{def}) a bit weaker than the existence of the usual limit, but it is the unique possibility for what an adic-space limit could be (see also the footnote 1 in the second paragraph of this introduction on comparison with Kisin's work \cite{Ki}). Here one finds some subtleties in the world of adic spaces.  Thus, we come back to our previous approach: try to construct the quotient at infinite level $S_{K^p_1}^0(G_1, X_1)/\Delta$ first as an adic space, and then prove that it is perfectoid. As a summary, with all of our efforts so far, we have reduced our specific problem in the setting of Shimura varieties to a problem on quotients of perfectoid spaces.

However, it is not obvious at all that such a quotient above $S_{K^p_1}^0(G_1, X_1)/\Delta$ as adic space exists! This leads us to study group quotients of perfectoid spaces seriously. Here come the essential new difficulties relative to known methods in the algebraic setting. First, we remark that in the classical rigid analytic geometry setting and assuming that the spaces are separated, by resorting to the associated Berkovich spaces, there exist nice results on \'etale quotients of these spaces (and their counterparts in other versions of $p$-adic geometry, e.g. Berkovich spaces and adic spaces) in \cite{CT}. However, we can not apply these affirmative results in \cite{CT}, as we are now working with perfectoid spaces, which are adic spaces almost never of finite type (over a fixed perfectoid field $k$), while the adic spaces associated to rigid analytic spaces and Berkovich spaces (as studied in \cite{CT}) are of finite type over the base field $k$.  By using the notion of diamonds introduced in \cite{Sch4}, the group quotient above exists naturally as a diamond. However, since we want to find some new perfectoid spaces coming from Shimura varieties, we will not make use of the theory of diamonds in this paper.  In our specific situation, thanks to the fact that $\Delta$ is \emph{finite}, we can show that there indeed exists such an adic space\[S_{K^p}^0(G, X),\] which may be viewed as a quotient space of $S_{K^p_1}^0(G_1, X_1)$ by $\Delta$, see Proposition \ref{P:quotient}, Corollary \ref{C:Quotient} and Proposition \ref{P:abelian-connected}. In fact, in Proposition \ref{P:quotient} (and Corollary \ref{C:quotient}) we prove a general result on the existence of \'etale quotients for finite free group actions on perfectoid spaces (over a perfectoid field). By construction, we have a finite \'etale Galois cover \[S_{K^p_1}^0(G_1, X_1)\ra S_{K^p}^0(G, X)\] with Galois group $\Delta$. In such a situation, as $S_{K^p_1}^0(G_1, X_1)$ is a perfectoid space,
by a theorem of Kedlaya-Liu (\cite{KL} Proposition 3.6.22, which is the affinoid situation here), $S_{K^p}^0(G, X)$  is also a perfectoid space. Then it will be not hard to construct a perfectoid Shimura variety $S_{K^p}(G,X)$ from $S_{K^p}^0(G,X)$, by using the theory of connected components of Shimura varieties (see the previous paragraph on our adaption of Delgine's construction). Moreover, there is naturally a Hodge-Tate period map
\[\pi_{\tr{HT}}: S_{K^p}(G,X)\lra \Fl_G,\]where $\Fl_G$ is the $p$-adic flag variety associated to the Shimura datum $(G, X)$, see subsection 3.4.
The main theorem of this paper is as follows, cf. Theorems \ref{S:thm}, \ref{HT}.
\begin{theorem} Assume that $(G,X)$ is an abelian type Shimura datum.
\begin{enumerate}
\item There is a perfectoid space $S_{K^p}=S_{K^p}(G,X)$ over $\C_p$ such that
\[S_{K^p}\sim \varprojlim_{K_p}\Sh_{K_pK^p}(G,X)^{ad},\]where $\Sh_{K_pK^p}(G,X)^{ad}$ is the adic space associated to $\Sh_{K_pK^p}(G,X)$ over $\C_p$, and the meaning of $\sim$ is as the Definition 2.4.1 of \cite{SW}. 
\item There is a $G(\Q_p)$-equivariant map of adic spaces
\[\pi_{\emph{HT}}: S_{K^p}\lra \Fl_G,\]compatible with the construction in \cite{CS} in the case that $(G, X)$ is of Hodge type. The map $\pi_{\emph{HT}}$ is invariant for the prime to $p$ Hecke action on $S_{K^p}$. Moreover,  pullbacks of automorphic vector bundles over finite level Shimura varieties to $S_{K^p}$ can be understood by using the map $\pi_{\emph{HT}}$.
\end{enumerate}
\end{theorem}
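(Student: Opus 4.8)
The plan is to follow the strategy laid out in the introduction, reducing the whole statement to the Hodge type case (handled in \cite{Sch3, CS}) by means of Deligne's construction of connected Shimura varieties (\cite{D} 2.7) and a quotient by a finite group. Fix a Shimura datum $(G_1, X_1)$ of Hodge type together with the central isogeny $G_1^{der}\ra G^{der}$ inducing $(G_1^{ad}, X_1^{ad})\st{\sim}{\ra}(G^{ad}, X^{ad})$. By \cite{Sch3} (or \cite{CS}) there is a perfectoid space $S_{K^p_1}(G_1, X_1)$ over $\C_p$ with $S_{K^p_1}(G_1, X_1)\sim\varprojlim_{K_{1p}}\Sh_{K_{1p}K^p_1}(G_1, X_1)^{ad}$. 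First I would carry out the adic/perfectoid analogue of the construction of \cite{D} 2.7: by singling out one geometric connected component at infinite level and forming the correct limit of its Hecke translates, one produces a perfectoid space $S^0_{K^p_1}(G_1, X_1)$ sitting inside $S_{K^p_1}(G_1, X_1)$ with $S^0_{K^p_1}(G_1, X_1)\sim\varprojlim_{K_{1p}}\Sh^0_{K_{1p}K^p_1}(G_1, X_1)^{ad}$; a point requiring care here is that a ``connected component'' of a perfectoid space in this sense is not a connected component for the adic topology, so it must be defined as such a limit. By Proposition \ref{P:delta} one may then choose $K^p_1$ so that there is a \emph{finite} group $\Delta$ acting freely on $S^0_{K^p_1}(G_1, X_1)$, with the quotients at each finite level recovering the connected abelian type Shimura varieties $\Sh^0_{K_pK^p}(G, X)$; the inputs are the description of $\pi_0$ of the Shimura varieties of $(G, X)$ and $(G_1, X_1)$ through the common adjoint group and the commensurability of arithmetic subgroups (subsections 3.1, 3.2).

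Next I would construct the quotient. By Proposition \ref{P:quotient} and Corollary \ref{C:quotient}, a finite free action of a finite group on a perfectoid space over a perfectoid field admits a quotient which is again an adic space, with the quotient morphism finite \'etale Galois; applying this to the action of $\Delta$ yields an adic space
\[S^0_{K^p}(G, X):=S^0_{K^p_1}(G_1, X_1)/\Delta\]
together with a finite \'etale Galois cover $S^0_{K^p_1}(G_1, X_1)\ra S^0_{K^p}(G, X)$ of group $\Delta$. Since the source is perfectoid, \cite{KL} Proposition 3.6.22 shows that $S^0_{K^p}(G, X)$ is perfectoid; this is Proposition \ref{P:abelian-connected}. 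Transporting the $\sim$-property down this finite \'etale tower --- using that the $\Delta$-quotient is compatible with the finite-level quotients of subsections 3.1 and 3.2 --- gives $S^0_{K^p}(G, X)\sim\varprojlim_{K_p}\Sh^0_{K_pK^p}(G, X)^{ad}$. Finally, by the theory of connected components, $\Sh_{K_pK^p}(G, X)$ is a finite disjoint union of $\C_p$-forms of connected Shimura varieties, indexed by a $\pi_0$-set independent of $K_p$; gluing the corresponding copies of $S^0_{K^p}(G, X)$ together with its prime-to-$p$ Hecke translates produces the perfectoid space $S_{K^p}=S_{K^p}(G, X)$ of part (1), and the $\sim$-property of part (1) follows from that at the connected level since this gluing commutes with $\varprojlim_{K_p}$. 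This is Theorem \ref{S:thm}.

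For part (2), recall that Caraiani--Scholze constructed in \cite{CS} a $G_1(\Q_p)$-equivariant, prime-to-$p$ Hecke invariant map $\pi_{\tr{HT}}:S_{K^p_1}(G_1, X_1)\ra\Fl_{G_1}$, where $\Fl_{G_1}=\Fl_{G_1,\mu_1}$ is the $p$-adic flag variety attached to the Shimura cocharacter. Since the kernel of $G\ra G^{ad}$ is central, hence contained in every parabolic, there are canonical identifications $\Fl_{G}\cong\Fl_{G^{ad}}=\Fl_{G_1^{ad}}\cong\Fl_{G_1}$. Restricting $\pi_{\tr{HT}}$ to the connected component $S^0_{K^p_1}(G_1, X_1)$ and checking, via the explicit description of the $\Delta$-action and the equivariance properties of $\pi_{\tr{HT}}$, that the resulting composite to $\Fl_G$ descends along the $\Delta$-quotient, one obtains $\pi_{\tr{HT}}:S^0_{K^p}(G, X)\ra\Fl_G$; spreading this over the $\pi_0$-set and the prime-to-$p$ Hecke translates gives the $G(\Q_p)$-equivariant, prime-to-$p$ Hecke invariant map $\pi_{\tr{HT}}:S_{K^p}\ra\Fl_G$ of Theorem \ref{HT}, which agrees with \cite{CS} when $(G, X)$ is itself of Hodge type by construction. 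The statement on automorphic vector bundles follows by descending, along the finite \'etale cover $S^0_{K^p_1}(G_1, X_1)\ra S^0_{K^p}(G, X)$, the corresponding description over $\Fl_{G_1}$ established in \cite{CS}, using that automorphic bundles for $(G, X)$ restrict, on connected components, to automorphic bundles for $(G_1, X_1)$.

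The main obstacle is the construction and study of the quotient $S^0_{K^p_1}(G_1, X_1)/\Delta$ as a perfectoid adic space (Propositions \ref{P:quotient} and \ref{P:abelian-connected}): this is the genuinely new geometric input beyond the algebraic theory of \cite{D, Ki, Mo}, since perfectoid spaces are essentially never of finite type so that the finite-type quotient results of e.g.\ \cite{CT} do not apply, while the general theory of \cite{Sch4} only yields a diamond. It is precisely the \emph{finiteness} of $\Delta$ --- not available for general profinite quotients --- that makes an adic-space, and hence (via Kedlaya--Liu) a perfectoid, quotient possible. A secondary difficulty, woven throughout, is the careful adic-geometric adaptation of \cite{D} 2.7 and the verification that all the $\sim$-comparisons are compatible with the quotient maps and with the passage between connected and non-connected Shimura varieties at every finite level.
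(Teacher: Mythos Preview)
Your proposal is correct and follows essentially the same route as the paper: pass to the connected perfectoid Shimura variety on the Hodge type side via the adic analogue of Deligne's \cite{D} 2.7 (Proposition \ref{P:connected}/Corollary \ref{C:connected}), take the quotient by the finite group $\Delta$ using Proposition \ref{P:quotient}/Corollary \ref{C:Quotient} together with Kedlaya--Liu (Proposition \ref{P:abelian-connected}), and then induce back up to the full Shimura variety (Proposition \ref{P:perf-induction}); part (2) is handled exactly as you describe, by descending the Caraiani--Scholze map through the $\Delta$-quotient (using that $\Delta$ acts trivially on $\Fl_{G_1}\simeq\Fl_G$) and then extending via the $\Al(K^p)$-action. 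One small imprecision: the $\pi_0$-set of $\Sh_{K_pK^p}(G,X)$ does depend on $K_p$, so the ``gluing'' in your final step is really the induction $[\Al(K^p)\times S^0_{K^p}]/\Al(K^p)^0$ along the profinite quotient $\Al(K^p)/\Al(K^p)^0$, as in Proposition \ref{P:perf-induction}, rather than a single finite disjoint union.
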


In fact, we can say more about the theorem above in the general setting.
Let $(G,X)$ be a Shimura datum. Fix a prime to $p$ level $K^p\subset G(\A_f^p)$. Consider the statement
\[\mathcal{P}(G,X): \tr{There exists a perfectoid space}\, S_{K^p}\, \tr{over}\, \C_p \,\tr{such that}\]\[ 
	S_{K^p}\sim \varprojlim_{K_p}\Sh_{K_pK^p}(G,X)^{ad}.\]
Fix a connected component $X^+\subset X$, and consider the statement
\[\mathcal{P}(G^{der},X^+): \tr{There exists a perfectoid space}\, S_{K^p}^0\, \tr{over}\, \C_p \,\tr{such that}\]\[ S_{K^p}^0\sim \varprojlim_{K_p}\Sh_{K_pK^p}^0(G,X)^{ad}.\]
Then we can prove that the two statements are equivalent
	\[\mathcal{P}(G,X)\Longleftrightarrow \mathcal{P}(G^{der},X^+).\]
And to prove the statement $\mathcal{P}(G,X)$, it suffices to work in the case that $G$ is simply connected, cf. Corollaries \ref{C:P} and \ref{C:shim}.
This opens the way to prove a theorem like the above toward all Shimura varieties.
Here, we assume that $(G, X)$ is of abelian type in order to use the theorem of Scholze on Hodge type Shimura varieties as the input.

Let $d$ be a positive integer, and $L_d$ be the quadratic lattice over $\Z$ of discriminant $2d$ and rank 21 introduced in 2.10 of \cite{MP}, see also subsection 4.2. Let $G=\SO(L_d\otimes\Q)$. Then there exists a Hermitian symmetric domain $X$ such that $(G, X)$ forms a Shimura datum of abelian type with reflex field $\Q$. Fix a tame (i.e. sufficiently small) level $K^p\subset G(\A_f^p)$. For any open compact subgroup $K_p\subset G(\Q_p)$ such that $K_pK^p$ is admissible (see subsection 4.2), let $\M_{2d,K_pK^p}$ be the moduli space of K3 surfaces together with a polarization of degree $2d$ over $\Q$, and let $\Sh_{K_pK^p}$ be the Shimura variety of level $K_pK^p$ associated to the datum $(G, X)$ over $\Q$. By Corollaries 4.4 and 4.15 of \cite{MP}, there is a period map over $\Q$
\[\M_{2d,K_pK^p}\lra \Sh_{K_pK^p}.\]More importantly, this period map is an open immersion. This is essentially the global Torelli theorem for K3 surfaces. Let $\M_{2d, K_pK^p}^{ad}$ be the adic space associated to $\M_{2d,K_pK^p}\times\C_p$. As a corollary of the above theorem, we get the following interesting perfectoid space.
\begin{corollary}
 There is a perfectoid space $\M_{2d, K^p}$ over $\C_p$ such that
\[\M_{2d,K^p}\sim \varprojlim_{K_p}\M_{2d, K_pK^p}^{ad}.\]
\end{corollary}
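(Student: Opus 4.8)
The plan is to realize $\M_{2d,K^p}$ as an open subspace of the perfectoid Shimura variety $S_{K^p}=S_{K^p}(G,X)$ furnished by the main theorem, namely as the preimage of the adic moduli space at one fixed finite level.

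First I would record the tower structure on the K3 side. For open compact subgroups $K_p\subset K_p'\subset G(\Q_p)$ with $K_pK^p$ and $K_p'K^p$ admissible, the forgetful map $\M_{2d,K_pK^p}\to\M_{2d,K_p'K^p}$ is finite \'etale, and by the compatibility of the period maps of \cite{MP} with change of level the square
\[\xymatrix{
\M_{2d,K_pK^p}\ar[r]\ar[d] & \Sh_{K_pK^p}\ar[d]\\
\M_{2d,K_p'K^p}\ar[r] & \Sh_{K_p'K^p}
}\]
commutes, the horizontal arrows being the open immersions of Corollaries 4.4 and 4.15 of \cite{MP}. I would then argue that this square is Cartesian: a point of $\Sh_{K_pK^p}$ lying over a point of $\M_{2d,K_p'K^p}\subset\Sh_{K_p'K^p}$ corresponds to a polarized K3 surface, already defined over the residue field, equipped with the $K_p'K^p$-level structure refined, as prescribed by the $\Sh_{K_pK^p}$-point, to a $K_pK^p$-level structure; such data constitute a point of $\M_{2d,K_pK^p}$ mapping to the given one. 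Consequently, after passing to adic spaces over $\C_p$ and fixing a sufficiently small $K_0$ with $K_0K^p$ admissible, one gets that for every admissible $K_pK^p$ with $K_p\subset K_0$ the open subspace $\M_{2d,K_pK^p}^{ad}\subset\Sh_{K_pK^p}(G,X)^{ad}$ is the preimage of $\M_{2d,K_0K^p}^{ad}$ under the transition map.

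Next I would fix such a $K_0$ and set $\M_{2d,K^p}:=\pi_{K_0}^{-1}\big(\M_{2d,K_0K^p}^{ad}\big)\subset S_{K^p}$, where $\pi_{K_0}\colon S_{K^p}\to\Sh_{K_0K^p}(G,X)^{ad}$ is the projection coming from $S_{K^p}\sim\varprojlim_{K_p}\Sh_{K_pK^p}(G,X)^{ad}$. This is an open subspace of a perfectoid space over $\C_p$, hence it is itself perfectoid over $\C_p$, perfectoidness being local on the adic space. It then remains to check $\M_{2d,K^p}\sim\varprojlim_{K_p}\M_{2d,K_pK^p}^{ad}$ in the sense of \cite{SW}, Definition 2.4.1. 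On underlying topological spaces, under the homeomorphism $|S_{K^p}|\xrightarrow{\sim}\varprojlim_{K_p}|\Sh_{K_pK^p}(G,X)^{ad}|$ the subset $|\M_{2d,K^p}|$ is the locus of compatible systems $(x_{K_p})$ with $x_{K_0}\in|\M_{2d,K_0K^p}^{ad}|$; the Cartesian property for $K_p\subset K_0$ together with the open immersions for general $K_p$ (passing through $K_p\cap K_0$) identifies this locus with $\varprojlim_{K_p}|\M_{2d,K_pK^p}^{ad}|$. For the affinoid perfectoid condition, since $\M_{2d,K^p}$ is open in the perfectoid space $S_{K^p}$ it is covered by affinoid perfectoid opens $V\subset S_{K^p}$, and each such $V$ already satisfies $V\sim\varprojlim_{K_p}(\text{image of }V\text{ in }\Sh_{K_pK^p}(G,X)^{ad})$; since $|V|\subset|\M_{2d,K^p}|$, for $K_p\subset K_0$ the image of $V$ lies in $\M_{2d,K_pK^p}^{ad}$, and as these $K_p$ are cofinal we get $V\sim\varprojlim_{K_p}(\text{image of }V\text{ in }\M_{2d,K_pK^p}^{ad})$. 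This yields $\M_{2d,K^p}\sim\varprojlim_{K_p}\M_{2d,K_pK^p}^{ad}$.

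The hard part is the claim that the displayed squares are Cartesian, that is, that the open-immersion period maps of \cite{MP} are compatible with the $p$-level transition maps and that refining the level structure neither creates nor destroys K3-points; granting this, everything else is formal, using only that an open subspace of a perfectoid space is perfectoid and that the conditions defining $\sim$ restrict to preimages of open subspaces.
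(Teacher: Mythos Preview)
Your proposal is correct and follows essentially the same route as the paper: both use the Torelli open immersion together with the Cartesian compatibility of the level tower (which the paper also records, right after Theorem~\ref{Torelli}) to realize $\M_{2d,K^p}$ as an open subspace of the perfectoid Shimura variety $S_{K^p}$. The only difference is packaging: the paper invokes Lemma~\ref{L:sub} (applied to the open subsystem $\M_{2d,K_pK^p}^{ad}\subset \Sh_{K_pK^p}^{ad}$) together with Theorem~\ref{S:thm}, whereas you unpack that lemma by hand, defining $\M_{2d,K^p}$ as a preimage and verifying the $\sim$ condition directly.
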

By construction, we have an open immersion of perfectoid spaces over $\C_p$
\[\M_{2d,K^p}\subset S_{K^p}, \]where $S_{K^p}$ is the perfectoid Shimura variety with prime to $p$ level $K^p$ associated to the datum $(G, X)$. In particular, the restriction on $\M_{2d,K^p}$ of the Hodge-Tate period map $\pi_{\tr{HT}}$ for $S_{K^p}$ gives rise to a Hodge-Tate period map
\[\pi_{\tr{HT}}: \M_{2d,K^p}\ra \Fl_G,\]which can be understood by the Kuga-Satake construction for K3 surfaces, and the Hodge-Tate period map of \cite{CS} (and \cite{Sch3}).

The same method can be used to prove that some other moduli spaces of polarized higher dimensional Calabi-Yau varieties with infinite level at $p$ are perfectoid: use perfectoid Shimura varieties of abelian type and the global Torelli theorem for the period map as the input. See the last paragraph of subsection 4.3.

In \cite{Sch3}, Scholze has also proved stronger versions of the above theorem for some compactification of Hodge type Shimura varieties (but less information for the Hodge-Tate period map). In the Siegel case, it is the minimal compactification studied there. For general Hodge type Shimura varieties, it is not known that whether the compactification used in \cite{Sch3} is the minimal one. Here, we feel that the issue of compactification will require an independent treatment, and we will leave it to a future work.

\subsection{Overview of the paper}
We now briefly describe the content of this article. In section 2, we make some preparation on adic and perfectoid spaces. In particular, we prove the existence of the \'etale quotient of a perfectoid space under a free action of a finite group. In section 3, we first review the constructions of Deligne, Moonen and Kisin as the motivation of our construction. Then after the preparation of the construction on the scheme level, we prove our main theorems on perfectoid Shimura varieties of abelian type and the Hodge-Tate period map on them. In section 4, we study an example, namely Shimura varieties of orthogonal type. In a special case, these varieties are closely related to the moduli spaces of polarized K3 surfaces. Then we prove that these moduli spaces with infinite level at $p$ are also perfectoid, by using our main theorem.
 
\subsection{Acknowledgments} I would like to thank Laurent Fargues for his constant encouragement and support in mathematics. I would also like to thank Peter Scholze sincerely, for his encouragement, suggestions, and his series works with deep insight on perfectoid spaces, which provide the cornerstone for the birth of this article. I should thank Ruochuan Liu for pointing out the precise reference in \cite{KL}. I wish to express my gratitude to Beno\^{\i}t Stroh for pointing out some gaps of arguments on adic spaces in the previous version. Also, I want to thank Liang Xiao for proposing some useful questions. I would like to thank Brian Conrad for helpful comments which make some parts of this paper more clear. Finally, I should thank the referee(s) for careful reading and valuable suggestions. This work was supported by the SFB 1085 ``Higher Invariants'' of the DFG.
\

\section{Some preliminaries on adic spaces}
We make some preparation on adic spaces in this section. 
We will use the conventions of Scholze on adic spaces in \cite{Sch4} section 4, see also \cite{SW} 2.1. 

\subsection{A brief review of adic spaces}
Consider the category $\CAff$, opposite of the category of complete Huber pairs $(A,A^+)$. We give it the structure of a site by declaring a cover of $(A,A^+)$ to be a family of morphisms $(A,A^+)\ra (A_i,A_i^+)$, such that $(A_i,A_i^+)=(\mathcal{O}_X(U_i),\mathcal{O}_X^+(U_i))$ for a covering of $X=\Spa_{top}(A,A^+)$ by rational subsets $U_i\subset X$. Here $\Spa_{top}(A,A^+)$ is the topological space (adic spectrum) associated to $(A,A^+)$, i.e. the set of equivalence classes of continuous valuations $|-|$ on $A$ which satisfy $|A^+|\leq 1$, equipped the topology generated by rational subsets. Let $\Spa(A,A^+)$ be the sheafification of the presheaf
\[(B,B^+)\mapsto \mathrm{Hom}((A,A^+),(B,B^+)) \]
on the site $\CAff$. An adic space is a sheaf on $\CAff$ such that \emph{locally} (cf. \cite{SW} Definition 2.1.5 for the precise meaning) it is of the form 
\[\Spa(A,A^+).\]  
The category of adic spaces is denoted by $\Adic$.
In particular, this category is larger than that studied by Huber in \cite{H}, where Huber restricted to the adic spaces built from those $(A,A^+)\in \CAff$ such that the pre-structure sheaves $\mathcal{O}_{\Spa_{top}(A,A^+)}$ are sheaves. The adic spaces in loc. cit. are called honest adic spaces in \cite{Sch4} and \cite{SW}. The category of honest adic spaces is denoted by $\Adic^h$, which is a full subcategory of $\Adic$.

Let $k$ be a non-archimedean field, by which we mean a field that is complete with respect to a nontrivial non-archimedean rank one valuation. We will work with the category \[\Adic/\Spa(k,k^\circ)\] of adic spaces over $k$, where $k^\circ$ is the integer ring of $k$. We denote this category as $\Adic_k$ for simplicity. Correspondingly, we consider complete Huber pairs over $(k, k^\circ)$, and denote this category as $k-\CAff$. We have the full subcategory $\Adic_k^h$ of honest adic spaces over $k$. The full subcategory of $\Adic_k$ (resp. $\Adic_k^h$) consisting of analytic adic (resp. analytic honest adic) spaces over $k$ will be denoted as $\Adic_k^a$ (resp. $\Adic_k^{ah}$). By \cite{Sch4} Proposition 4.5.2,  any analytic adic space $X$ over $k$ is the colimit of a diagram of affinoid adic spaces $\Spa(A,A^+)$, with $A$ Tate $k$-algebras. We remark that the adic spaces studied in \cite{Sch1} are exactly objects in $\Adic_k^{ah}$.

Recall that an adic space $X$ over $k$ is called locally noetherian, if it is locally of the form $\Spa(A,A^+)$, where $A$ is a strongly noetherian Tate $k$-algebra. These spaces form a full subcategory of $\Adic_k^{ah}$, which contains the essential image of the fully faithful functor \[\{\tr{rigid analytic spaces over}\, k\}\ra \Adic_k^{ah}\] associating a rigid analytic space its corresponding adic space over $k$, cf. \cite{H}, p.42.

When $k$ is a perfectoid field, there is another full subcategory $\Perf_k$ of $\Adic_k^{ah}$, the category of perfectoid spaces over $k$, cf. \cite{Sch1} Theorem 6.3.

\subsection{Inverse limits of adic spaces}
We recall the following definition, which will be crucial to our study of perfectoid Shimura varieties in the next section. 
 \begin{definition}[\cite{SW} Definition 2.4.1]\label{def}
 	Let $(X_i)_{i\in I}$ be a filtered inverse system of adic spaces with quasicompact and quasiseparated transition maps, let $X$ be an adic space, and let $f_i: X\ra X_i$ be a compatible family of morphisms. We write $X\sim \varprojlim_i X_i$  if the map of underlying topological spaces $|X|\ra \varprojlim_i |X_i|$ is a homeomorphism, and if there is an open cover of $X$ by affinoid $\Spa(A,A^+)\subset X$, such that the map
 	\[\varinjlim_{\Spa(A_i,A_i^+)\subset X_i}A_i\ra A\]has dense image, where the direct limit runs over all open affinoid
 	\[\Spa(A_i,A_i^+)\subset X_i\]over which $\Spa(A,A^+)\subset X\ra X_i$ factors.
 \end{definition}
 Sometimes we will not mention the compatible family of morphisms $f_i: X\ra X_i$ explicitly: once we write $X\sim \varprojlim_i X_i$, we mean that there exists such a family of morphisms, which is usually clear from the context.
 
 We have the following immediate lemma, which may be implicitly contained in \cite{Sch3} and \cite{SW}.
 \begin{lemma}\label{L:sub}
 	In the setting of Definition~\ref{def}, let $Y_i\subset X_i$ be a locally closed subspace for each $i$ and these $Y_i$ form a subsystem, i.e. $(Y_i)_{i\in I}$ forms a filtered inverse system under the induced transition maps such that $Y_j\simeq Y_i\times_{X_i}X_j$ for any $j\geq i$. Then there is a locally closed subspace $Y\subset X$ such that
 	\[Y\sim \varprojlim_i Y_i.\]
 Moreover, $Y\subset X$ is an open (resp. a closed) subspace if the system $Y_i$ is an open (resp. a closed) subsystem of $X_i$. In particular, if $X$ is a perfectoid space over a perfectoid field $k$ and the system $Y_i\subset X_i$ is an open subsystem of adic spaces over $k$, then $Y$ is also a perfectoid space over $k$. 
 \end{lemma}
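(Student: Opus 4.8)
The plan is to treat separately the case of an open subsystem, the case of a closed subsystem, and then to reduce the general locally closed case to these two. The starting point is topological. Fix an index $i_0\in I$. Because $Y_j\simeq Y_i\times_{X_i}X_j$ for $j\ge i$ and because open (resp.\ closed) immersions of adic spaces are stable under base change with underlying topological space the preimage, the subsets $|Y_j|\subset|X_j|$ are ``eventually stable'': for every $j\ge i_0$, $|Y_j|$ is exactly the preimage of $|Y_{i_0}|$ under the transition map $|X_j|\to|X_{i_0}|$. Since $I$ is filtered, the homeomorphism $|X|\cong\varprojlim_i|X_i|$ provided by $X\sim\varprojlim_iX_i$ then identifies $\varprojlim_i|Y_i|$ with $f_{i_0}^{-1}(|Y_{i_0}|)$; denote this subset by $|Y|\subset|X|$. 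As the preimage under a continuous map of a set that is locally closed (resp.\ open, resp.\ closed), $|Y|$ is itself locally closed (resp.\ open, resp.\ closed) in $|X|$.

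In the \emph{open} case, $|Y|$ is open and one takes $Y\subset X$ to be the associated open adic subspace (over $k$ if the $X_i$ are over $k$). The homeomorphism $|Y|\cong\varprojlim_i|Y_i|$ is obtained by restriction. For the density condition in Definition~\ref{def}, one refines the affinoid cover of $X$ given there to a cover of $Y$ by affinoids $\Spa(A,A^+)\subset Y$. Each such $\Spa(A,A^+)$ is quasicompact and maps into the open set $|Y_{i_0}|$, hence into $|Y_i|$ for all sufficiently large $i$; picking an affinoid open of $X_i$ through which $\Spa(A,A^+)\to X_i$ factors and shrinking it to a rational subset lying in $Y_i$, one gets an affinoid open $\Spa(A_i,A_i^+)\subset Y_i$ through which the map factors, and the density of $\varinjlim_iA_i\to A$ follows from the corresponding statement on $X$ because density is preserved under rational localization.

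In the \emph{closed} case one works locally on $X$. On an affinoid $\Spa(A,A^+)\subset X$ with $\varinjlim_iA_i\to A$ of dense image, $Y_i$ meets $\Spa(A_i,A_i^+)$ in a closed subspace $\Spa(A_i/\mathfrak a_i,(A_i/\mathfrak a_i)^+)$, and $Y_j\simeq Y_i\times_{X_i}X_j$ forces $\mathfrak a_j=\overline{\mathfrak a_iA_j}$. Setting $\mathfrak a=\overline{(\varinjlim_i\mathfrak a_i)A}\subset A$, $B=A/\mathfrak a$, and $B^+$ the integral closure of the image of $A^+$, one checks that $\Spa(B,B^+)\to\Spa(A,A^+)$ is a closed immersion with underlying space $V(\mathfrak a)=\varprojlim_i|Y_i\cap\Spa(A_i,A_i^+)|$ and that $\varinjlim_iA_i/\mathfrak a_i\to B$ has dense image; crucially, in the category of adic spaces of \cite{Sch4} the object $\Spa(B,B^+)$ is an adic space with no sheafiness assumption. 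The defining ideals are compatible on overlaps, so these local closed subspaces glue to a closed subspace $Y\subset X$ with $Y\sim\varprojlim_iY_i$. For a general locally closed subsystem, one chooses a factorization $Y_{i_0}\hookrightarrow W_{i_0}\hookrightarrow X_{i_0}$ of the locally closed immersion into an open immersion followed by a closed immersion, and puts $W_j=W_{i_0}\times_{X_{i_0}}X_j$ for $j\ge i_0$; by transitivity of fibre products the $W_j$ form a closed subsystem of the $X_j$ and the $Y_j$ form an open subsystem of the $W_j$. Applying the closed case yields a closed $W\subset X$ with $W\sim\varprojlim_iW_i$, and applying the open case inside $W$ then yields the desired locally closed $Y\subset X$ with $Y\sim\varprojlim_iY_i$.

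Finally, if $X$ is perfectoid over a perfectoid field $k$ and the $Y_i$ form an open subsystem of adic spaces over $k$, then $Y$ is an open subspace of the perfectoid space $X$, hence perfectoid over $k$ by \cite{Sch1}. I expect the main obstacle to be the closed case: one must choose the \emph{completed} ideal $\mathfrak a$ in the limit so that $\Spa(A/\mathfrak a,\cdot)$ has precisely the underlying space $\varprojlim_i|Y_i|$ and so that the density assertion is not destroyed by completion, and then one must verify the gluing of the local closed subspaces. The reduction of the locally closed case to the closed and open cases, by contrast, is a formal consequence of the stability of open and closed immersions under base change, and the open case itself is essentially a routine unwinding of Definition~\ref{def}.
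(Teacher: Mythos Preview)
Your argument is essentially correct, but it is considerably more elaborate than the paper's proof, which dispatches the lemma in a single stroke. The paper simply fixes any index $i$, defines $Y:=Y_i\times_{X_i}X$ (noting that this is independent of $i$ by the compatibility $Y_j\simeq Y_i\times_{X_i}X_j$), and then invokes \cite{SW} Proposition~2.4.3, which asserts that the relation $\sim$ of Definition~\ref{def} is preserved under base change; this immediately gives $Y\sim\varprojlim_iY_i$. The locally closed (resp.\ open, resp.\ closed) nature of $Y\subset X$ follows because locally closed (resp.\ open, resp.\ closed) immersions are stable under base change.

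By contrast, you unpack what that fibre product ``is'' in each case (open, closed, locally closed) and verify the conditions of Definition~\ref{def} by hand. This has the virtue of being self-contained and of making transparent exactly which topological and ring-theoretic facts are being used, especially in the closed case where you explicitly build the defining ideal in the limit. The cost is that you re-prove a special case of \cite{SW} Proposition~2.4.3 and must worry about issues (gluing of closed ideals, completion preserving density, sheafiness of $A/\mathfrak a$) that the fibre-product formulation sidesteps entirely. In particular, your own stated concern about the closed case---getting the completed ideal and the gluing exactly right---simply does not arise in the paper's approach, since the fibre product $Y_i\times_{X_i}X$ is already a well-defined object in the category $\Adic$ and the cited proposition handles the $\sim$ relation uniformly.
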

 \begin{proof}
 	Indeed, we may assume that for any $i$, $Y_i\subset X_i$ is locally closed. Then for any $j\geq i$, we have $Y_j\simeq Y_i\times_{X_i}X_j\ra X_j$ with the composition giving the inclusion. Set $Y=Y_i\times_{X_i}X$, which does not depend on $i$. Then by Proposition 2.4.3 of \cite{SW}, we have \[Y\sim \varprojlim_i Y_i.\]
 	Since $Y_i\subset X_i$ is locally closed, the base change map $Y=Y_i\times_{X_i}X\ra X$ is a locally closed immersion. By the same argument, $Y\subset X$ is an open (resp. a closed) subspace if the system $Y_i$ is an open (resp. a closed) subsystem of $X_i$.
 \end{proof}
 
 Recall that a complete Huber pair $(A, A^+)\in k-\CAff$ with $A$ a Tate $k$-algebra is called \emph{stably uniform}, if $\mathcal{O}_X(U)$ is uniform, i.e. the subring of power-bounded elements $\mathcal{O}_X(U)^\circ$ is bounded, for all rational subsets $U\subset X=\Spa(A,A^+)$. If $(A, A^+)$ is stably uniform, then it is sheafy, i.e. $X=\Spa(A, A^+)$ is an honest adic space over $k$, cf. \cite{BV} Theorem 7.
 The following lemma is the global version of \cite{SW} Proposition 2.4.2, which asserts the existence of $X$ in Definition \ref{def} in some cases.
 \begin{lemma}\label{L:existence}
 Let $k$ be a non-archimedean field, and $(X_i)_{i\in I}$ be a filtered inverse system of adic spaces over $k$ with finite transition maps. Suppose that for any affinoid subsystem $\Spa(A_i, A_i^+)\subset X_i$, the $\varpi$-adic completion $(A, A^+)=\wh{\varinjlim_i (A_i, A_i^+)}$ is stably uniform, where $\varpi$ is a pseudo-uniformizer of $k$. Then there exists an adic space $X$ over $k$, with a compatible family of morphisms, such that $X\sim \varprojlim_i X_i$.
 \end{lemma}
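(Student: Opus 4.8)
The plan is to build $X$ by gluing affinoid pieces, using the local existence statement (\cite{SW} Proposition 2.4.2) as the basic input. First I would fix a cofinal index, say $0\in I$, and choose a cover of $X_0$ by open affinoids $\Spa(A_{0,\alpha}, A_{0,\alpha}^+)\subset X_0$. Since each transition map $X_j\to X_i$ is finite, it is in particular affine, so the preimage of $\Spa(A_{0,\alpha},A_{0,\alpha}^+)$ in $X_i$ is again affinoid, say $\Spa(A_{i,\alpha},A_{i,\alpha}^+)$, and $(\Spa(A_{i,\alpha},A_{i,\alpha}^+))_{i}$ is an affinoid subsystem of $(X_i)_i$. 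By hypothesis the $\varpi$-adic completion $(A_\alpha,A_\alpha^+)=\wh{\varinjlim_i(A_{i,\alpha},A_{i,\alpha}^+)}$ is stably uniform, hence sheafy by \cite{BV} Theorem 7, so $X_\alpha:=\Spa(A_\alpha,A_\alpha^+)$ is an honest analytic adic space over $k$; and \cite{SW} Proposition 2.4.2 gives $X_\alpha\sim\varprojlim_i \Spa(A_{i,\alpha},A_{i,\alpha}^+)$, together with the compatible family of maps $X_\alpha\to \Spa(A_{i,\alpha},A_{i,\alpha}^+)$. The goal is then to glue the $X_\alpha$ along overlaps to produce the global $X$.

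The key step is to identify, for two indices $\alpha,\beta$, the correct open subspaces of $X_\alpha$ and $X_\beta$ along which to glue, and to check the cocycle condition. The natural candidate is the following: the overlap $\Spa(A_{0,\alpha},A_{0,\alpha}^+)\cap\Spa(A_{0,\beta},A_{0,\beta}^+)$ is an open subset of $X_0$, covered by rational subsets which pull back compatibly through the whole tower (again using that finite maps are affine, and that rational subsets pull back to rational subsets), giving an open subsystem $(U_{i,\alpha\beta})_i$ of both $(\Spa(A_{i,\alpha},A_{i,\alpha}^+))_i$ and $(\Spa(A_{i,\beta},A_{i,\beta}^+))_i$. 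Applying Lemma \ref{L:sub} to $X_\alpha\sim\varprojlim_i\Spa(A_{i,\alpha},A_{i,\alpha}^+)$ with the open subsystem $U_{i,\alpha\beta}$ produces an open subspace $X_{\alpha\beta}\subset X_\alpha$ with $X_{\alpha\beta}\sim\varprojlim_i U_{i,\alpha\beta}$; likewise inside $X_\beta$ we get $X_{\beta\alpha}\sim\varprojlim_i U_{i,\alpha\beta}$. Since two adic spaces that are $\sim$-limits of the same filtered system (with compatible maps) are canonically isomorphic — this uniqueness is exactly the content of the "unique possibility'' remarks in the introduction, and follows from Definition \ref{def} by comparing underlying topological spaces and the dense images of rings of functions — we obtain a canonical isomorphism $\varphi_{\alpha\beta}\colon X_{\alpha\beta}\st{\sim}{\ra}X_{\beta\alpha}$. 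Triple overlaps are handled the same way, so the cocycle identity $\varphi_{\alpha\gamma}=\varphi_{\beta\gamma}\circ\varphi_{\alpha\beta}$ holds on $X_{\alpha\beta\gamma}$, both sides being the canonical identification of $\sim$-limits of the triple-overlap subsystem. Gluing the $X_\alpha$ along the $\varphi_{\alpha\beta}$ yields an adic space $X$ over $k$ (a sheaf on $\CAff$ that is locally of the form $\Spa(A,A^+)$, so an object of $\Adic_k$), and the maps $X_\alpha\to\Spa(A_{i,\alpha},A_{i,\alpha}^+)$ glue to morphisms $X\to X_i$ compatible with the transition maps.

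It remains to verify $X\sim\varprojlim_i X_i$ in the sense of Definition \ref{def}. On underlying spaces, $|X|\to\varprojlim_i|X_i|$ is a homeomorphism because it is so locally: $|X_\alpha|\to\varprojlim_i|\Spa(A_{i,\alpha},A_{i,\alpha}^+)|$ is a homeomorphism by the local statement, the $|X_\alpha|$ form an open cover of $|X|$, and $\varprojlim_i|X_i|$ is covered by the corresponding opens $\varprojlim_i|\Spa(A_{i,\alpha},A_{i,\alpha}^+)|$ (using quasicompactness of the transition maps so that the inverse limit of topological spaces behaves well, as in \cite{SW} Section 2.4); one checks the identifications on overlaps agree. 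For the density condition, the affinoid cover $\{X_\alpha=\Spa(A_\alpha,A_\alpha^+)\}$ of $X$ works: by construction $\varinjlim_i A_{i,\alpha}\to A_\alpha$ has dense image, and any open affinoid $\Spa(A_{i,\alpha}',A_{i,\alpha}'^+)\subset X_i$ through which $X_\alpha\to X_i$ factors contains $\Spa(A_{i,\alpha},A_{i,\alpha}^+)$ in the relevant sense, so the direct limit over such affinoids still surjects densely onto $A_\alpha$. The main obstacle I anticipate is the bookkeeping around uniqueness of $\sim$-limits and the cocycle condition: one must be careful that the canonical isomorphisms $\varphi_{\alpha\beta}$ are genuinely canonical (depending only on the subsystem and the compatible families of maps, not on auxiliary choices) so that they automatically satisfy the cocycle identity; this is where the subtleties of adic spaces flagged in the introduction could bite, and it is worth isolating a small lemma asserting that if $Y\sim\varprojlim_i Z_i$ and $Y'\sim\varprojlim_i Z_i$ with compatible maps, then there is a unique isomorphism $Y\st{\sim}{\ra}Y'$ over the $Z_i$.
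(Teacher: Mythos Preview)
Your overall strategy---cover the base by affinoids, use \cite{SW} Proposition 2.4.2 locally, and glue---is the same as the paper's, which phrases the gluing step as a one-line ``reduce to the affinoid case''. But there is a genuine gap in your justification of the gluing isomorphisms.

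You propose to identify $X_{\alpha\beta}\subset X_\alpha$ with $X_{\beta\alpha}\subset X_\beta$ by invoking a general uniqueness statement for $\sim$-limits, and you even suggest isolating a lemma to that effect. That lemma is \emph{false} in general: as the paper itself remarks immediately after Lemma \ref{L:existence}, uniqueness of $X$ with $X\sim\varprojlim_i X_i$ is only known when $X$ is perfectoid (\cite{SW} Proposition 2.4.5), and Scholze's examples (\cite{Sch4} Proposition 10.2.6) show it can fail for mere adic spaces. So you cannot build the $\varphi_{\alpha\beta}$ this way, and the cocycle check is not automatic.

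The fix is precisely where the stably uniform hypothesis is used, and this is the paper's point. Cover the overlap $\Spa(A_{0,\alpha},A_{0,\alpha}^+)\cap\Spa(A_{0,\beta},A_{0,\beta}^+)$ by rational subsets $V_0$ of $\Spa(A_{0,\alpha},A_{0,\alpha}^+)$ (and simultaneously of $\Spa(A_{0,\beta},A_{0,\beta}^+)$). The preimages $V_i\subset\Spa(A_{i,\alpha},A_{i,\alpha}^+)$ are rational, and because $(A_\alpha,A_\alpha^+)$ is \emph{stably} uniform, the corresponding open in $X_\alpha$ is \emph{explicitly} $\Spa\big(\wh{\varinjlim_i\mathcal{O}(V_i)},\wh{\varinjlim_i\mathcal{O}^+(V_i)}\big)$. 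The same computation inside $X_\beta$ gives the \emph{same} affinoid, so the identification $X_{\alpha\beta}\simeq X_{\beta\alpha}$ is not an abstract uniqueness statement but a literal equality of Huber pairs on a rational cover, and the cocycle condition is then tautological. In short: stable uniformity is not just there to make the local pieces sheafy, it is what makes the gluing data canonical.
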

 \begin{proof}
 By the assumption that $(A, A^+)=\wh{\varinjlim_i (A_i, A_i^+)}$ is stably uniform, in the above Lemma \ref{L:sub}, if each $Y_i=\Spa(B_i,B_i^+)\subset X_i$ is a rational subspace, then $Y=\Spa(B, B^+)$ with $(B,B^+)=\wh{\varinjlim_i (B_i, B_i^+)}$. Therefore, we can reduce to the affinoid case: $X_i=\Spa(A_i, A_i^+)$ with $(A_i, A_i^+) \in k-\CAff$. 
 Since we are working over a base field $k$, all the transition maps are adic. Let $X=\Spa(A, A^+)$ with 
 \[(A, A^+)=\wh{\varinjlim_i (A_i, A_i^+)},\]where the completion is taken with respect to the $\varpi$-adic topology. Then by \cite{SW} Proposition 2.4.2, $X$ satisfies the requirement.
 \end{proof}
 Let the situation be as in the above lemma. Fix an $i_0\in I$. Suppose that all the adic spaces $X_{i}$ are locally noetherian, and for any $i\geq i_0$, the transition map $X_i\ra X_{i_0}$ is \'etale. Then the presentation $X':=\varprojlim_{i\geq i_0} X_i$ defines an object in the pro-\'etale site $X_{i_0,\tr{pro\'et}}$. The adic space $X$ constructed in Lemma \ref{L:existence} is denoted by $\wh{X'}$ in \cite{Sch2} section 4.
 
 How about the uniqueness of the space $X$ in the setting of Definition \ref{def}? In general, not much is known. However, if $X$ is perfectoid, then \cite{SW} Proposition 2.4.5 says that $X$ is unique among perfectoid spaces satisfying the condition in Definition \ref{def}. In this case, $X$ is locally constructed as in the Lemma \ref{L:existence}. More precisely, we have (see also \cite{Sch5} Proposition 2.22)
 \begin{lemma}\label{L:perfectoid-local}
 Let $k$ be a perfectoid field with residue field of characteristic $p$, $(X_i)_{i\in I}$ be a filtered inverse system of adic spaces over $k$, with finite transition maps, and $X$ be a perfectoid space over $k$ such that $X\sim \varprojlim_i X_i$. Fix an $i_0\in I$. Then there exists an affinoid covering $(\Spa(A_{i_0}, A_{i_0}^+))$ of $X_{i_0}$, such that the pullbacks $\Spa(A, A^+)$  of $\Spa(A_{i_0}, A_{i_0}^+)$ under $X\ra X_{i_0}$ form a perfectoid affinoid covering of $X$ (i.e. $A$ is a perfectoid $k$-algebra). Moreover, for each $X_i\ra X_{i_0}$ with $i\geq i_0$, let $\Spa(A_i, A_i^+)$ be the pullback of the affinoid $\Spa(A_{i_0}, A_{i_0}^+)$. Then
 \[(A, A^+)\simeq \wh{\varinjlim_i (A_i, A_i^+)},\]where the completion is taken with respect to the $\varpi$-adic topology (as before, $\varpi$ is a pseudo-uniformizer of $k$), and 
 \[\Spa(A, A^+)\sim \varprojlim_i \Spa(A_i, A_i^+).\]
 \end{lemma}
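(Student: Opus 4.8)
The plan is to deduce the whole statement from one local claim together with the $\sim$-limit formalism of \cite{SW} and the gluing theorems for perfectoid algebras recalled before Lemma~\ref{L:existence}; this is the strategy of \cite{Sch5} Proposition 2.22. The local claim I will aim for is: \emph{for every affinoid open $\Spa(A_{i_0}, A_{i_0}^+)\subset X_{i_0}$, its preimage $U\subset X$ is an affinoid perfectoid space with $U=\Spa(A, A^+)$, where $(A, A^+)=\wh{\varinjlim_{i\ge i_0}(A_i, A_i^+)}$ (the completion being $\varpi$-adic for a pseudo-uniformizer $\varpi$ of $k$) and $\Spa(A_i, A_i^+)\subset X_i$ is the preimage of $\Spa(A_{i_0}, A_{i_0}^+)$.} Granting the claim, the lemma follows immediately: I take any affinoid covering $(\Spa(A_{i_0}^{(\alpha)}, A_{i_0}^{(\alpha)+}))_\alpha$ of $X_{i_0}$ (which exists since $X_{i_0}$ is an adic space), note that its preimages under $X\ra X_{i_0}$ cover $X$ automatically, use that $\{i\in I\colon i\ge i_0\}$ is cofinal in the filtered set $I$ (so colimits over it compute colimits over $I$), and use that each $X_i\ra X_{i_0}$ is finite, hence pulls an affinoid open back to an affinoid open, so that the $\Spa(A_i^{(\alpha)}, A_i^{(\alpha)+})\subset X_i$ form filtered inverse subsystems with finite transition maps in the sense of Lemma~\ref{L:sub}.

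To prove the local claim, I fix $\Spa(A_{i_0}, A_{i_0}^+)\subset X_{i_0}$, with preimages $\Spa(A_i, A_i^+)\subset X_i$ for $i\ge i_0$ and $U\subset X$. By Lemma~\ref{L:sub} (that is, \cite{SW} Proposition 2.4.3 applied as in its proof), the open subspace $U=\Spa(A_{i_0}, A_{i_0}^+)\times_{X_{i_0}}X$ satisfies $U\sim\varprojlim_{i\ge i_0}\Spa(A_i, A_i^+)$; being open in the perfectoid space $X$, $U$ is itself perfectoid, and $|U|=\varprojlim_{i\ge i_0}|\Spa(A_i, A_i^+)|$. I then set $(A, A^+):=\wh{\varinjlim_{i\ge i_0}(A_i, A_i^+)}$, which makes sense since all the $A_i$ are $(k,k^\circ)$-algebras; it is a complete Tate Huber pair over $(k,k^\circ)$, and since a point of $\Spa_{top}(A, A^+)$ is nothing but a compatible system of continuous valuations on the $A_i$ bounded by the $A_i^+$, there is a natural homeomorphism $\Spa_{top}(A, A^+)\cong\varprojlim_{i\ge i_0}|\Spa(A_i, A_i^+)|=|U|$.

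The heart of the argument — and the step I expect to be the main obstacle — is to show that $A$ is a perfectoid $k$-algebra, bypassing the fact that a priori one does not even know $(A, A^+)$ to be sheafy. Since $U$ is open in the perfectoid space $X$ it has an affinoid perfectoid cover; since moreover the preimages in $U$ of rational subsets of the $\Spa(A_i, A_i^+)$ form a basis of the topology of $|U|$ and are rational — hence affinoid perfectoid — inside any affinoid perfectoid open of $X$ containing them, I may refine to an affinoid perfectoid cover of $U$ each of whose members $M$ is the preimage of a rational subset $R_j\subset\Spa(A_j, A_j^+)$ for some $j\ge i_0$. Enlarging $j$, the data $(S,t)$ cutting out $R_j$ may be taken pulled back from $A_{i_0}$, giving compatible rational subsets $R_i\subset\Spa(A_i, A_i^+)$ for $i\ge i_0$ and a rational localization, with underlying ring $A\langle S/t\rangle$, of $(A, A^+)$. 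Now $M\sim\varprojlim_{i\ge j}R_i$ by Lemma~\ref{L:sub}, and since $M$ is affinoid perfectoid the construction underlying \cite{SW} Propositions 2.4.2 and 2.4.5 (cf.\ \cite{Sch5} Proposition 2.22) recovers the coordinate ring of $M$ as the $\varpi$-adic completion of $\varinjlim_{i\ge j}\mathcal{O}_{X_i}(R_i)=\varinjlim_{i\ge j}A_i\langle S/t\rangle$; as rational localization commutes with filtered colimits and with $\varpi$-adic completion, this completion is precisely $A\langle S/t\rangle$, so $A\langle S/t\rangle$ is a perfectoid $k$-algebra. Thus $\Spa_{top}(A, A^+)=|U|$ is covered by rational subsets on which the structure presheaf of $(A, A^+)$ takes perfectoid values, and the Kedlaya--Liu/Buzzard--Verberkmoes gluing theorems then show that $(A, A^+)$ is stably uniform, hence sheafy, and — perfectoidness being local on $\Spa(A, A^+)$ — that $A$ is a perfectoid $k$-algebra. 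The delicacy throughout is that, until sheafyness is known, one cannot work with a structure sheaf on $\Spa(A, A^+)$ directly: one must bootstrap from the perfectoid opens supplied by the ambient $X$ and match them with abstract rational localizations of $A$ via the homeomorphism $\Spa_{top}(A, A^+)\cong|U|$, keeping track of the subrings $A_i^+$ and arranging the rational-subset data compatibly over the whole tower.

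Once $A$ is known to be a perfectoid $k$-algebra, the remainder is formal. Then $(A, A^+)$ is sheafy, $X':=\Spa(A, A^+)$ is an affinoid perfectoid space, and $X'\sim\varprojlim_{i\ge i_0}\Spa(A_i, A_i^+)$ by \cite{SW} Proposition 2.4.2 — which is the final displayed assertion of the lemma. Both $U$ and $X'$ are perfectoid spaces equipped with compatible families of maps to the $\Spa(A_i, A_i^+)$ exhibiting them as $\sim\varprojlim_{i\ge i_0}\Spa(A_i, A_i^+)$, so the uniqueness statement \cite{SW} Proposition 2.4.5 provides a canonical isomorphism $U\cong X'=\Spa(\wh{\varinjlim_{i\ge i_0}(A_i, A_i^+)})$ compatible with the projections to the $X_i$. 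This proves the local claim, and with it the lemma.
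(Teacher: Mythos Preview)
Your proof is correct, and in fact you prove a sharper local statement than the lemma requires: you show that \emph{every} affinoid open of $X_{i_0}$ pulls back to an affinoid perfectoid open of $X$ with the indicated coordinate ring, whereas the lemma only asserts the existence of \emph{some} affinoid covering with this property.

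The paper's own proof is much shorter and exploits this weaker formulation. It starts from an arbitrary affinoid covering of $X_{i_0}$, forms the completed colimits $(A,A^+)=\wh{\varinjlim_i(A_i,A_i^+)}$, invokes Lemma~\ref{L:sub} to identify $\Spa(A,A^+)$ with the open preimage in $X$ and to obtain $\Spa(A,A^+)\sim\varprojlim_i\Spa(A_i,A_i^+)$, and then simply declares that ``by refining the affinoid covering of $X_{i_0}$ if necessary'' one arranges $A$ to be perfectoid. No justification is given for this refinement step. Your argument, by contrast, supplies exactly what that step needs: you cover the preimage $U$ by affinoid perfectoid opens coming from rational subsets in the tower, match their coordinate rings with rational localizations of $A$ via \cite{SW} Propositions 2.4.2 and 2.4.5, and then invoke the Kedlaya--Liu/Buzzard--Verberkmoes gluing theorems to conclude that $A$ itself is perfectoid. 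This is essentially the argument behind \cite{Sch5} Proposition~2.22, as you note. What your approach buys is a self-contained proof that does not leave the crucial perfectoidness step to the reader; what the paper's approach buys is brevity, at the cost of deferring that step to the literature (or to the reader's imagination).
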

 \begin{proof}
 Take any affinoid covering $(\Spa(A_{i_0}, A_{i_0}^+))$ of $X_{i_0}$, let $\Spa(A_i, A_i^+)$ be the pullback of the affinoid $\Spa(A_{i_0}, A_{i_0}^+)$, for each $X_i\ra X_{i_0}$ with $i\geq i_0$. Set  \[(A, A^+)\simeq \wh{\varinjlim_i (A_i, A_i^+)}.\] By Lemma \ref{L:sub}, $\Spa(A, A^+)\subset X$ is an open subspace, and $\Spa(A, A^+)\sim \varprojlim_i \Spa(A_i, A_i^+).$ By refining the affinoid covering $(\Spa(A_{i_0}, A_{i_0}^+))$ of $X_{i_0}$ if necessary, we get that $A$ is a perfectoid $k$-algebra. These perfectoid affinoid subspaces $\Spa(A, A^+)$ form a covering of $X$.
\end{proof}
 
 We fix a perfectoid field $k$ with residue field of characteristic $p$ throughout the rest of this section. Let $(X_i)_{i\in I}$ be a filtered inverse system of adic spaces over $k$, with finite transition maps. If the conditions in Lemma \ref{L:existence} are satisfied, we get an adic space $X$ over $k$, such that $X\sim \varprojlim_i X_i$. By Lemma \ref{L:perfectoid-local} and \cite{SW} Proposition 2.4.5, this space is the only candidate for being perfectoid. 
 
 \subsection{\'Etale quotients for finite groups actions on perfectoid spaces}
 Let $X\in \Perf_k$ be a perfectoid space, equipped with a right action over $k$ of a profinite group $G$.  We say that this action is free, if the natural map \[X\times G\ra X\times_k X,\] with the natural projection on the first factor and the group action on the second factor,  is functorially injective (i.e. a monomorphism). Here, $X\times G$ is the perfectoid space associated to (in the sense of Definition \ref{def}) $\varprojlim_{H}X\times G/H$, where $H$ runs through the set of open normal subgroups of $G$, and $X\times G/H$ is a finite disjoint union of copies of $X$. This is also the fiber product over $k$ of $X$ and (the perfectoid space associated to) $G$, see Lemma \ref{L:profinite} (and Lemma \ref{L:product}). In the above setting, let $R=X\times G\ra X\times_kX$ be the functorial equivalence relation defined by the free action of $G$ on $X$. We denote by $X/G$ the sheafification of the presheaf \[Y\mapsto X(Y)/R(Y)\] on the category $\Perf_k$ equipped with the pro-\'etale topology (cf. \cite{Sch4} 8.2). By definition, $X/G$ is then a diamond, cf. loc. cit. 8.2.8. We would like to know whether this diamond is representable. Examples in \cite{Sch4} (Proposition 10.2.6) show that, contrary to the cases of schemes (\cite{SGA}) and rigid analytic spaces (\cite{CT}), one should put additional structures to ensure that the quotients as adic (perfectoid) spaces exist, if we only require the group $G$ to be \emph{profinite}. More precisely, if $G$ is a profinite rather than finite group, then Proposition 10.2.6 of \cite{Sch4} says that there exists a finite universal homeomorphism $f: Y\ra Y'$ of two different locally noetherian adic spaces, such that $Y$ and $Y'$ have the same perfectoid profinite Galois cover $X$ with Galois group $G$, and the associated diamonds of $Y$ and $Y'$ are isomorphic via $f$. In this case, the quotient of $X$ by $G$ as a diamond exists uniquely, but both $Y$ and $Y'$ can be natural candidates of quotients of $X$ by $G$ as adic spaces. Therefore, we assume that $G$ is a \emph{finite} group, and this will be the only case that will be used later. Then it is enough to consider the category $\Perf_k$ equipped with the \'etale topology, cf. \cite{Sch1} section 7.
 The following proposition is an analogue of \cite{SGA} Th\'eor\`eme 4.1 of Expos\'e V and \cite{CT} Lemma 3.2.1.
  \begin{proposition} \label{P:quotient}
 	Let $(A,A^+)$ be an affinoid perfectoid $k$-algebra.
 	Assume that $X=\Spa(A,A^+)$ is equipped with a free right action of a finite group $G$ over $k$. Consider the induced $G$-action on $(A,A^+)$. Set \[B=A^G, \quad B^+=(A^+)^G,\] the $G$-invariant subalgebras of $A$ and $A^+$ respectively. Then the following statements hold true.
 	\begin{enumerate}
 		\item $(B,B^+)$ is a stably uniform affinoid Tate $k$-algebra. In particular, $Y=\Spa(B,B^+)$ is an analytic honest adic space over $k$. 
 		\item The \'etale sheaf associated to $Y$ on the category $\Perf_k$  is given by $X/G$ with the above notation. Moreover, the natural morphism $X\ra Y$ is a finite \'etale Galois cover with Galois group $G$.
 		\item $Y$ is a perfectoid space.
 	\end{enumerate}
 \end{proposition}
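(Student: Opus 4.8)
The plan is to reduce all three assertions to one structural fact: a free action of a \emph{finite} group on an affinoid perfectoid space is, locally on the quotient, split, i.e.\ it looks like the projection $G\times V\to V$. Granting this, $Y$ is built by gluing such local charts, the map $X\to Y$ is finite \'etale Galois, and the perfectoid property is inherited along it. \textbf{Step 1 (separating orbits).} First I would extract from the freeness hypothesis that for every $g\in G\setminus\{e\}$ the fixed-point locus $\mathrm{Fix}(g)\subset X$ --- the closed adic subspace cut out by the closed ideal of $A$ generated by $\{a-g(a):a\in A\}$ --- is empty; this is exactly what ``$X\times G\to X\times_kX$ is a monomorphism'' gives over the diagonal. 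In particular $G$ acts with no fixed points on the spectral space $|X|$, so every orbit has $|G|$ distinct points. Using that $|X|$ is spectral, that $X$ is quasiseparated, and that $G$ is \emph{finite} (for instance by separating the finitely many points of an orbit on the Hausdorff space of rank-one points of $\Spa(A,A^+)$ and pulling back), for each $x\in|X|$ one produces a rational subset $V\subset X$ with $x\in V$ and $gV\cap V=\emptyset$ for all $g\neq e$. Then $W_V:=\bigsqcup_{g\in G}gV$ is a $G$-stable quasicompact open of $X$, isomorphic to $G\times V$ compatibly with the $G$-actions, so $\mathcal{O}_X(W_V)\cong\prod_{g\in G}\mathcal{O}_X(V)$ and $\mathcal{O}_X(W_V)^G\cong\mathcal{O}_X(V)$, which is again an affinoid perfectoid $k$-algebra since it is a rational localization of $A$. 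These $W_V$ form a $G$-stable quasicompact open cover of $X$.

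\textbf{Step 2 (building $Y$ and the cover).} The images $\overline{W_V}\subset|X|/G$ of the $W_V$ form an open cover; over $\overline{W_V}$ I take the chart $V=\Spa(\mathcal{O}_X(V),\mathcal{O}_X^+(V))$, a perfectoid affinoid. I would glue these charts along their overlaps --- which carry free $G$-actions of the same split type, so the gluing data is visible on the $V$-charts --- to obtain a perfectoid space $\widetilde Y$ over $k$ together with a morphism $q\colon X\to\widetilde Y$ which, over the chart $V\subset\widetilde Y$, is the split cover $G\times V\to V$. Hence $q$ is a finite \'etale Galois cover with group $G$, the natural $G$-action being by deck transformations, and $\widetilde Y$ represents the \'etale (equivalently pro-\'etale, since $G$ is finite) sheaf quotient $X/G$ because $q$ is a surjective \'etale map with kernel pair $R=X\times G$. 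This is the perfectoid analogue of \cite{SGA} Th\'eor\`eme 4.1 of Expos\'e V and \cite{CT} Lemma 3.2.1.

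\textbf{Step 3 (identification and perfectoidness).} Since $q\colon X\to\widetilde Y$ is finite \'etale and $X=\Spa(A,A^+)$ is affinoid perfectoid, \cite{KL} Proposition 3.6.22 shows $\widetilde Y$ is affinoid perfectoid, $\widetilde Y=\Spa(\widetilde B,\widetilde B^+)$ with $\widetilde B$ a perfectoid $k$-algebra and $A$ finite \'etale over $\widetilde B$. Taking global sections (using that the structure presheaf is a sheaf on the \'etale site of perfectoid spaces) and then $G$-invariants gives $\widetilde B=\mathcal{O}_X(X)^G=A^G=B$; and, since $q$ is surjective and $A$, $\widetilde B$ are uniform, the ring of integral elements satisfies $\widetilde B^+=\{f\in\widetilde B:q^*f\in A^+\}=A^+\cap B=(A^+)^G=B^+$. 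Thus $\widetilde Y=\Spa(B,B^+)=Y$, which proves (2) and (3) at once. Finally (1) follows: $B=\widetilde B$ is a perfectoid, hence uniform, Tate $k$-algebra, and every rational localization of $(B,B^+)$ is a rational localization of the perfectoid affinoid $\widetilde Y$, hence again perfectoid and in particular uniform --- so $(B,B^+)$ is stably uniform and $Y$ is an analytic honest adic space over $k$.

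\textbf{Main obstacle.} The real work is in Steps 1--2: producing the quotient as an honest adic --- indeed perfectoid --- space rather than merely a diamond. Perfectoid spaces are essentially never of finite type over $k$, so the finite-type quotient machinery of \cite{CT} (via Berkovich spaces) cannot be invoked directly; the substitute is the orbit-separation argument, which uses in an essential way that $G$ is \emph{finite} --- a quasicompact neighbourhood of a point can be shrunk so as to be disjoint from its finitely many translates --- followed by finite \'etale descent of the perfectoid property. Some care is also needed to confirm that ``free'' in the sheaf-theoretic sense of the hypothesis is exactly strong enough to force each $\mathrm{Fix}(g)$ ($g\neq e$) to be empty as an adic subspace, and to match up the rings of integral elements in Step 3.
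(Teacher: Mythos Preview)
Your strategy is genuinely different from the paper's, and the difference is instructive. The paper never separates orbits or glues. Instead it transfers the freeness hypothesis to the scheme world: from the monomorphism $X\times G\to X\times_kX$ one extracts (via the diagonal) an isomorphism of Huber rings $C\simeq C\otimes_D C$, hence a monomorphism $\Spec(A)\times G\to\Spec(A)\times_k\Spec(A)$; then SGA3, Exp.~V, Th\'eor\`eme~4.1 gives directly that $B=A^G\hookrightarrow A$ is finite \'etale Galois with group $G$. With this purely algebraic fact in hand, the paper checks stable uniformity of $(B,B^+)$ by the elementary observation $\mathcal{O}_Y(U_1)^\circ=(\mathcal{O}_X(U_2)^\circ)^G$, and only then invokes \cite{KL}~3.6.22 on the algebra map $B\to A$ to conclude $B$ is perfectoid. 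The logical order is (1)$\Rightarrow$(2)$\Rightarrow$(3), all at the level of rings.

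Your route --- build $\widetilde Y$ geometrically by gluing split charts, then recover the ring description --- is attractive but has a real gap in Step~3. You write that ``\cite{KL} Proposition 3.6.22 shows $\widetilde Y$ is affinoid perfectoid''. It does not: as the paper itself records, \cite{KL}~3.6.22(a) is a statement about uniform Banach \emph{algebras} (if $B\to A$ is faithfully finite \'etale and $A$ is perfectoid then $B$ is perfectoid). It presupposes that the base is already of the form $\Spa(\widetilde B,\widetilde B^+)$; it cannot be used to deduce that a perfectoid space assembled by gluing is globally affinoid. What you actually need is a descent statement of the shape ``if $X\to Y$ is finite \'etale surjective with $X$ affinoid perfectoid, then $Y$ is affinoid perfectoid'', and you have not supplied one. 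Without it you cannot identify $\widetilde Y$ with $\Spa(B,B^+)$, so parts (1) and (2) as stated (which are assertions about $Y=\Spa(B,B^+)$, not about an abstract quotient) remain unproved. The cleanest repair is precisely the paper's: prove $B\to A$ finite \'etale by passing to $\Spec$ and citing SGA3, which immediately gives $Y=\Spa(B,B^+)$ and makes your glued $\widetilde Y$ superfluous.

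A smaller point: your orbit-separation in Step~1 is essentially correct but more delicate than your sketch suggests. Distinct points of a $G$-orbit are pairwise incomparable for specialization (a nontrivial specialization $x\rightsquigarrow gx$ would iterate to a cycle, forcing equality), hence their unique rank-one generalizations are pairwise distinct; one then separates those in the compact Hausdorff space of rank-one points by rational domains and pulls back along the continuous retraction. Finally one intersects $\bigcap_{g}g^{-1}U_g$ to obtain a rational $V\ni x$ with $gV\cap V=\emptyset$. This works, but it is worth spelling out, since ``pulling back'' open sets from the Berkovich quotient to the adic space is exactly where a careless argument can go wrong.
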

 \begin{proof}
(1) Let $A_0\subset A$ be a ring of definition. For any $g\in G$, $A_0g\subset A$ is also a ring of definition. We want to first find a ring of definition stable under the $G$-action. Let $A^\circ\subset A$ be the subring of powerbounded elements. Then $A^\circ$ is stable under the $G$-action on $A$. Indeed, by \cite{Sch1} Definition 2.6 (i), there is a subring $A_0\subset A$ such that $aA_0, a\in k^\times$ forms a basis of open neighborhoods of 0. For any $x\in A^\circ$, by definition, there exists some $a\in k^\times$ such that $\{x^n|\,n\geq0\}\subset aA_0$. Let $g\in G$ be any element. Then $aA_0g, a\in k^\times$ also forms a basis of open neighborhoods of 0, and $\{(xg)^n|\,n\geq0\}\subset aA_0g$. Therefore $\{(xg)^n|\,n\geq0\}$ is bounded, i.e. $xg\in A^\circ$. Since $A$ is perfectoid, $A^\circ$ is bounded, therefore a ring of definition. The fact that $B=A^G$ is a Tate $k$-algebra is now clear, since $aA^{\circ G}, a\in k^\times$ forms a basis of open neighborhoods of 0. By construction, we have
$B^\circ=A^{\circ G}$, which is bounded, and \[B^+=B \cap A^+=(A^+)^G \subset B^\circ\] is an open and integrally closed subring. Hence $(B,B^+)$ is an affinoid Tate $k$-algebra. 

Since $A$ is perfectoid, therefore stably uniform. Let $f_1,\dots, f_n, g\in B$ such that $(f_1,\dots, f_n)B$ is open in $B$. Let $U_1\subset Y=\Spa(B,B^+)$ be the rational subset defined by these elements. By viewing $f_1,\dots, f_n, g$ as elements of $A$ by the inclusion $B\subset A$, 
we get rational subset $U_2\subset X=\Spa(A,A^+)$. We have
 \[\mathcal{O}_Y(U_1)^\circ=B\lan \frac{f_1}{g},\cdots,\frac{f_n}{g}\ran^\circ=(A\lan \frac{f_1}{g},\cdots,\frac{f_n}{g}\ran^\circ)^G=(\mathcal{O}_X(U_2)^\circ)^G,\]
 where $B\lan \frac{f_1}{g},\cdots,\frac{f_n}{g}\ran$ and $A\lan \frac{f_1}{g},\cdots,\frac{f_n}{g}\ran$ are as in the proof of Corollary \ref{C:geometric} below.
Therefore $B\lan \frac{f_1}{g},\cdots,\frac{f_n}{g}\ran^\circ$ is bounded, and $(B, B^+)$ is stably uniform.

(2) By assumption, $X\times G\ra X\times_kX$ is a monomorphism. We view $G$ as the constant group scheme over $k$, and let $k[G]$ be the associated group algebra. Write \[Z=X\times G=\Spa(C, C^+),\quad W=X\times_k X=\Spa(D, D^+),\] where $C=A\otimes_k k[G], D=A\otimes_k A$. Then arguing in the same as \cite{EGA} Proposition 5.3.8, we get that the diagonal map $\delta_{Z/W}: Z\ra Z\times_WZ$ is an isomorphism, which implies that $C\simeq C\otimes _DC$ as complete Huber rings. This implies in turn that 
\[\Spec(A)\times G\ra \Spec(A)\times_k\Spec(A)\]is a monomorphism by \cite{EGA} Proposition 5.3.8. Therefore, by \cite{SGA} Expos\'e V, Th\'eor\`eme 4.1, $A$ is integral over $B=A^G$, and we get an \'etale quotient of the affine scheme $\Spec(A)$ by the free right action of $G$, which is given by $\Spec(B)$. Moreover, the natural map $\Spec(A)\ra \Spec(B)$ is a finite \'etale Galois cover with Galois group $G$, and we have an isomorphism \[\Spec(A)\times G\simeq \Spec(A)\times_{\Spec(B)}\Spec(A),\] i.e. an isomorphism of rings \[C=A\otimes_k k[G]\simeq A\otimes_{B}A.\] By loc. cit. $A^+$ is integral over $B^+=(A^+)^G$. Therefore, the natural morphism $X\ra Y$ is a finite \'etale Galois cover with Galois group $G$. In particular, $Y=X/G$ as \'etale sheaves on $\Perf_k$.

(3) Recall \cite{KL} Proposition 3.6.22 (a) says that, if $B\ra A$ is a morphism of uniform Banach algebras such that $A$ is perfectoid, and $A$ is faithfully finite \'etale over $B$ (i.e. the map $B\ra A$ is finite \'etale and faithfully flat), then $A$ is also perfectoid (here we have switched the notations $A$ and $B$ as opposed to those in loc. cit.). Now, $\Spa(A,A^+)$ is by assumption perfectoid, by statement (2) $B\ra A$ is faithfully finite \'etale. Thus $\Spa(B,B^+)$ is also perfectoid.
\end{proof}
The following corollary is immediate.
\begin{corollary}\label{C:quotient}
Let $X \in \Perf_k$ be a perfectoid space, equipped with a free right action of a finite group $G$. Assume that there is an affinoid perfectoid covering of $X$ with each affinoid space stable under the action of $G$. Then there is a perfectoid space $Y$ over $k$, together with a natural morphism $X\ra Y$ which is a finite \'etale Galois cover with Galois group $G$, such that as \'etale sheaves on the category $\Perf_k$,  we have the equality $Y=X/G$. 
\end{corollary}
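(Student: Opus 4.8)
The plan is to reduce the statement to the affinoid case, which is Proposition~\ref{P:quotient}, by a gluing argument; the hypothesis is tailored so that this works, as it furnishes an affinoid perfectoid cover of $X$ consisting of $G$-stable pieces.

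First I would fix such a covering $X=\bigcup_i U_i$ by $G$-stable affinoid perfectoid opens $U_i=\Spa(A_i,A_i^+)$ and apply Proposition~\ref{P:quotient} to each $U_i$. This produces a perfectoid affinoid $V_i=\Spa(A_i^G,(A_i^+)^G)$ together with a finite \'etale Galois cover $f_i\colon U_i\ra V_i$ with group $G$, which represents the \'etale quotient $U_i/G$. The next step is to assemble the $V_i$ into a gluing datum. For two indices $i,j$, the overlap $U_{ij}:=U_i\cap U_j$ is a $G$-stable open subspace of $U_i$; since $f_i$ is \'etale, hence an open map of adic spaces, and $G$-equivariant, the subset $V_{ij}:=f_i(U_{ij})\subset V_i$ is open and satisfies $f_i^{-1}(V_{ij})=G\cdot U_{ij}=U_{ij}$, so that $U_{ij}=U_i\times_{V_i}V_{ij}$ and $f_i$ restricts to a finite \'etale Galois cover $U_{ij}\ra V_{ij}$ with group $G$. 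By symmetry we also obtain an open $V_{ji}\subset V_j$ and a finite \'etale Galois $G$-cover $U_{ij}\ra V_{ji}$; since a finite \'etale Galois $G$-cover is canonically the \'etale-sheaf quotient of its total space by $G$, both $V_{ij}$ and $V_{ji}$ are canonically identified with $U_{ij}/G$, which yields a canonical isomorphism $\varphi_{ij}\colon V_{ij}\st{\sim}{\ra}V_{ji}$ compatible with $f_i$ and $f_j$ over $U_{ij}$. Restricting to triple overlaps $U_{ijk}$ and using the uniqueness of these identifications, one checks the cocycle condition $\varphi_{jk}\circ\varphi_{ij}=\varphi_{ik}$. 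Gluing the $V_i$ along the $\varphi_{ij}$ then gives an adic space $Y$ over $k$ that is covered by the perfectoid affinoids $V_i$, hence is a perfectoid space (being perfectoid is local on an adic space), and the $f_i$ glue to a natural morphism $\pi\colon X\ra Y$.

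It then remains to verify the properties of $\pi$. One checks that $\pi$ is finite \'etale on the cover $\{V_i\}$ of $Y$, where $X\times_Y V_i=U_i$ and $U_i\ra V_i$ is finite \'etale by construction; the $G$-action on $X$ is by $Y$-automorphisms, and the isomorphism $X\times_Y X\simeq X\times G$ follows from the corresponding isomorphisms over the $V_i$ supplied by Proposition~\ref{P:quotient}(2), so $\pi$ is a Galois cover with group $G$. Finally, to see $Y=X/G$ as \'etale sheaves on $\Perf_k$: the canonical map $X/G\ra Y$ is an epimorphism because $X\ra Y$ is a surjective \'etale cover, so any local section of $Y$ lifts \'etale-locally to $X$, and it is a monomorphism because two such lifts differ \'etale-locally by a unique element of $G$ in view of $X\times_Y X\simeq X\times G$; hence it is an isomorphism.

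I expect the gluing step to be the main obstacle: the delicate point is that in a perfectoid space an intersection of affinoid opens need not itself be affinoid, so one cannot apply Proposition~\ref{P:quotient} directly to $U_i\cap U_j$; instead one must pass through the openness of the finite \'etale maps $f_i$ to produce the patches $V_{ij}\subset V_i$, and invoke the canonicity (and uniqueness) of the \'etale quotient of a finite \'etale Galois cover to get well-defined transition isomorphisms satisfying the cocycle condition. The remaining verifications --- that $f_i(U_{ij})$ is open, that an adic space glued from perfectoid affinoids along open perfectoid subspaces is perfectoid, and the sheaf-theoretic identification $Y\simeq X/G$ --- are routine.
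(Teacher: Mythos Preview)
Your proposal is correct and follows the approach the paper has in mind: the paper simply declares this corollary ``immediate'' from Proposition~\ref{P:quotient} without writing a proof, and your argument spells out precisely the gluing procedure that reduces the global statement to the affinoid case. The care you take with the overlaps (using openness of the finite \'etale maps $f_i$ to produce $V_{ij}$ and invoking the canonical uniqueness of the quotient to get compatible transition maps) is exactly the content the paper leaves implicit.
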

The next corollary says that in the above situation, the \'etale quotient $Y$ is also a ``geometric quotient'', as in the case of schemes.
\begin{corollary}
\item Let the situation be as in the above corollary. The associated map of adic spaces
$\pi: X\ra Y$induces that
\begin{enumerate}\label{C:geometric}
	\item $|Y|=|X|/G$ as topological spaces,
	\item $(\mathcal{O}_Y,\mathcal{O}_Y^+)=\big((\pi_\ast\mathcal{O}_X)^G,(\pi_\ast\mathcal{O}_X^+)^G\big)$, where
	$(\pi_\ast\mathcal{O}_X)^G$ (resp. $(\pi_\ast\mathcal{O}_X^+)^G$) is the $G$-invariant subsheaf of $(\pi_\ast\mathcal{O}_X)$ (resp. $(\pi_\ast\mathcal{O}_X^+)$).
\end{enumerate} 
\end{corollary}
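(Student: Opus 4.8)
The plan is to deduce both statements from Proposition~\ref{P:quotient} and Corollary~\ref{C:quotient} by localising on $Y$. Both assertions are local on $Y$: statement (1) concerns a homeomorphism, which may be checked on an open cover, and statement (2) is an equality of sheaves on $Y$. By construction $Y$ is covered by affinoids of the form $\Spa(A^G,(A^+)^G)$ whose preimage in $X$ is the corresponding $G$-stable affinoid perfectoid $\Spa(A,A^+)$, so the first step is to reduce to the affinoid case $X=\Spa(A,A^+)$, $Y=\Spa(B,B^+)$ with $B=A^G$, $B^+=(A^+)^G$, and $\pi\colon X\to Y$ the finite \'etale Galois cover with group $G$ provided by Proposition~\ref{P:quotient}. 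Recall also from its proof that $A\otimes_k k[G]\cong A\otimes_B A$, i.e.\ $X\times G\cong X\times_Y X$, and that $A$ is finite over $B$.

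For (1): the map $|\pi|\colon|X|\to|Y|$ is continuous, surjective (as $\pi$ is finite \'etale and faithfully flat) and closed (as $\pi$ is finite), hence a quotient map; being $G$-invariant it factors through a quotient map $|X|/G\to|Y|$, and it remains to see this map is injective, i.e.\ that $G$ acts transitively on every fibre $\pi^{-1}(y)$, $y\in|Y|$. For this I would pass to the completed residue field $\mathcal{K}=\mathcal{K}(y)$, which is a perfectoid field since $Y$ is perfectoid, and use that $\pi^{-1}(y)$ is identified, compatibly with the $G$-actions, with the finite set of factor fields of the finite \'etale $\mathcal{K}$-algebra $A\otimes_B\mathcal{K}$. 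Base-changing $A\otimes_B A\cong A\otimes_k k[G]$ along $B\to\mathcal{K}$ exhibits $A\otimes_B\mathcal{K}$ as a $G$-Galois algebra over the field $\mathcal{K}$, and the structure theory of Galois algebras over a field shows that $G$ permutes its factors transitively. A bijective quotient map is a homeomorphism, giving $|Y|=|X|/G$.

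For (2): pullback along the $G$-invariant map $\pi$ gives natural maps $\mathcal{O}_Y\to(\pi_*\mathcal{O}_X)^G$ and $\mathcal{O}_Y^+\to(\pi_*\mathcal{O}_X^+)^G$, and it suffices to show these are isomorphisms after evaluation on rational subsets $U\subset Y=\Spa(B,B^+)$. Viewing the defining parameters of $U$ inside $A$ realises $\pi^{-1}(U)$ as the rational subset of $X$ with the same parameters, and since $A$ is finite over $B$ one has $\mathcal{O}_X(\pi^{-1}(U))=A\otimes_B\mathcal{O}_Y(U)$ (the relevant completed base change coinciding with the algebraic tensor product because $A$ is finite over $B$, $\mathcal{O}_Y(U)$ being perfectoid). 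Taking $G$-invariants and using that $B\to\mathcal{O}_Y(U)$ is flat (rational localisations of the perfectoid algebra $B$ are flat) yields $\mathcal{O}_X(\pi^{-1}(U))^G=A^G\otimes_B\mathcal{O}_Y(U)=\mathcal{O}_Y(U)$. For the integral part, combine this with the description $\mathcal{O}_X^+(\pi^{-1}(U))=\{f\in\mathcal{O}_X(\pi^{-1}(U)):|f(x)|\le 1\ \forall x\in\pi^{-1}(U)\}$, valid on analytic adic spaces: for $f\in\mathcal{O}_X(\pi^{-1}(U))^G=\mathcal{O}_Y(U)$ one has $|f(x)|=|f(\pi(x))|$, and since $\pi^{-1}(U)\to U$ is surjective the condition becomes $|f(y)|\le 1$ for all $y\in U$, i.e.\ $f\in\mathcal{O}_Y^+(U)$.

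I expect the only genuine obstacle to be the transitivity of $G$ on the fibres $\pi^{-1}(y)$ in (1): unlike the scheme-theoretic statement already used in the proof of Proposition~\ref{P:quotient}, one must control arbitrary continuous valuations rather than just points of $\Spec$, and the delicate point is verifying that the natural $G$-set structure on $\pi^{-1}(y)$ really is that of the set of factors of the Galois $\mathcal{K}(y)$-algebra $A\otimes_B\mathcal{K}(y)$. As an alternative one may try to invoke surjectivity of $|X\times_Y X|\to|X|\times_{|Y|}|X|$ for the finite morphism $\pi$, together with the identification $X\times_Y X\cong\coprod_{g\in G}X$ under which the first projection is the identity on each copy. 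The remaining steps — reduction to the affinoid case, gluing over the affinoid cover of $Y$, and the manipulations with rational localisations — are routine.
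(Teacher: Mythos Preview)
Your proof is correct, but differs from the paper's in emphasis.

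For (1), the paper simply says: since $Y=X/G$ as \'etale sheaves on $\Perf_k$, the associated topological spaces satisfy $|Y|=|X/G|=|X|/G$. That is the whole argument. Your approach is more concrete and topological: you show $|\pi|$ is a quotient map and then verify transitivity of $G$ on each fibre via the Galois-algebra structure of $A\otimes_B\mathcal{K}(y)$. This is perfectly valid and has the advantage of being self-contained (it does not rely on knowing how to extract underlying spaces from \'etale-sheaf quotients), at the cost of some extra work. Your alternative suggestion, surjectivity of $|X\times_YX|\to|X|\times_{|Y|}|X|$ combined with $X\times_YX\cong\coprod_{g\in G}X$, is the cleanest way to get fibre-transitivity and is closer in spirit to the paper's one-line argument.

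For (2), both you and the paper check the statement on rational subsets $U\subset Y$ and identify $\pi^{-1}(U)$ with the rational subset of $X$ defined by the same parameters. The paper then simply asserts $B\lan f_i/g\ran=(A\lan f_i/g\ran)^G$ and $B\lan f_i/g\ran^+=(A\lan f_i/g\ran^+)^G$ directly (these identities already appeared in the proof of Proposition~\ref{P:quotient}(1)), and notes that the resulting square is cocartesian. You instead write $\mathcal{O}_X(\pi^{-1}(U))=A\otimes_B\mathcal{O}_Y(U)$ and invoke flatness of perfectoid rational localisation to commute $(-)^G$ with $-\otimes_B\mathcal{O}_Y(U)$. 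This is a genuine but mild difference: your argument makes the mechanism (flat base change of invariants) explicit, while the paper's just records the outcome. Your treatment of $\mathcal{O}^+$ via the valuative description is a nice touch and arguably more transparent than the paper's bare assertion.

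The concern you flag at the end about matching the $G$-set $\pi^{-1}(y)$ with the factor set of $A\otimes_B\mathcal{K}(y)$ is not a real obstacle: this is the standard compatibility for finite \'etale morphisms of analytic adic spaces, and in any case your alternative via $|X\times_YX|\to|X|\times_{|Y|}|X|$ sidesteps it entirely.
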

\begin{proof}
	Since we have the equality of \'etale shaves $Y=X/G$, the associated topological spaces satisfy $|Y|=|X/G|=|X|/G$. The second statement can be easily verified by looking at all rational subsets of $Y$. Indeed, we check the equality $(\mathcal{O}_Y,\mathcal{O}_Y^+)=\big((\pi_\ast\mathcal{O}_X)^G,(\pi_\ast\mathcal{O}_X^+)^G\big)$ on rational subsets as follows. Let $f_1,\dots, f_n, g\in B$ such that $(f_1,\dots, f_n)B$ is open in $B$. Let $U_1\subset Y=\Spa(B,B^+)$ be the rational subset defined by these elements. By viewing $f_1,\dots, f_n, g$ as elements of $A$ by the inclusion $B\subset A$, 
	we get rational subset $U_2\subset X=\Spa(A,A^+)$. By construction, we have $U_2=\pi^{-1}(U_1)$. Consider the complete Huber pair
	\[\mathcal{O}_Y(U_1)=B\lan \frac{f_1}{g},\cdots,\frac{f_n}{g}\ran, \quad \mathcal{O}_Y^+(U_1)= B\lan \frac{f_1}{g},\cdots,\frac{f_n}{g}\ran^+,\]
	where $B\lan \frac{f_1}{g},\cdots,\frac{f_n}{g}\ran$ is the completion of $B[ \frac{f_1}{g},\cdots,\frac{f_n}{g}]$, and $B\lan \frac{f_1}{g},\cdots,\frac{f_n}{g}\ran^+$ is the completion of the integral closure of $B^+[\frac{f_1}{g},\cdots,\frac{f_n}{g}]$ in $B[\frac{f_1}{g},\cdots,\frac{f_n}{g}]$. Similarly, we have the complete Huber pair
	\[\mathcal{O}_X(U_2)=A\lan \frac{f_1}{g},\cdots,\frac{f_n}{g}\ran, \quad \mathcal{O}_X^+(U_2)= A\lan \frac{f_1}{g},\cdots,\frac{f_n}{g}\ran^+.\]
	We have
	\[B\lan \frac{f_1}{g},\cdots,\frac{f_n}{g}\ran=(A\lan \frac{f_1}{g},\cdots,\frac{f_n}{g}\ran)^G, \quad B\lan \frac{f_1}{g},\cdots,\frac{f_n}{g}\ran^+=(A\lan \frac{f_1}{g},\cdots,\frac{f_n}{g}\ran^+)^G.\]Moreover, the following commutative diagram is cocartesian:
	\[\xymatrix{
		(B,B^+)\ar[r]\ar[d]&(A,A^+)\ar[d]\\
		(B\lan \frac{f_1}{g},\cdots,\frac{f_n}{g}\ran, B\lan \frac{f_1}{g},\cdots,\frac{f_n}{g}\ran^+)\ar[r]&(A\lan \frac{f_1}{g},\cdots,\frac{f_n}{g}\ran,A\lan \frac{f_1}{g},\cdots,\frac{f_n}{g}\ran^+).
	}\]
	Therefore, we get $(\mathcal{O}_Y,\mathcal{O}_Y^+)=\big((\pi_\ast\mathcal{O}_X)^G,(\pi_\ast\mathcal{O}_X^+)^G\big)$.
\end{proof}

We return to the situation as in Definition \ref{def}.
In this paper, we will always assume that the adic spaces $X_i$ in Definition \ref{def} are locally noetherian over $k$. In fact, we will only work with adic spaces associated to rigid analytic spaces over $k$, or even adic spaces associated to schemes of locally of finite type over $k$.
The following proposition will be used crucially in the next section.
\begin{proposition}\label{prop}
	Let $X$ and $(X_i)_{i\in I}$ be as in  Definition~\ref{def}, with $X\in \Perf_k$ perfectoid, $X_i\in \Adic_k^{ah}$ locally noetherian. Let $(Y_i)_{i\in I}$ be another filtered inverse system of locally noetherian adic spaces over $k$ with finite transition maps. Assume that there is an adic space $Y$ over $k$ such that $Y\sim \varprojlim_i Y_i$. Suppose that for each $i$, there exisits a finite \'etale morphism of locally noetherian adic spaces $X_i\ra Y_i$ over $k$, and these morphisms are compatible with the transition maps of the two systems.  Suppose further that there exists an $i_0\in I$ such that $X_{i_0}\ra Y_{i_0}$ is a surjection, and for any $i\geq i_0$, the morphism $X_i\ra Y_i$ is the pullback of $X_{i_0}\ra Y_{i_0}$ under the transition map $Y_i\ra Y_{i_0}$. In other words, the following diagrams are cartesian
	\[\xymatrix{X_i\ar[r]\ar[d]& X_{i_0}\ar[d]\\
		Y_i\ar[r]& Y_{i_0}
		}\]
	for all $i\geq i_0$.  Then $Y$ is perfectoid, and there exists a finite \'etale morphism $X\ra Y$, which is the pullback of $X_i\ra Y_i$ for any $i\geq i_0$.
\end{proposition}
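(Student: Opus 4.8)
The strategy is to reduce to an affinoid situation over $Y_{i_0}$ and then descend the perfectoid property from the $X$-side to the $Y$-side by the Kedlaya--Liu argument already used in the proof of Proposition~\ref{P:quotient}, with the finite \'etale structure $X_{i_0}\to Y_{i_0}$ playing the role of the free group action there. First I would reduce to the affinoid case. Since $X_{i_0}\to Y_{i_0}$ is finite it is affine and, by hypothesis, surjective, so the preimages $\pi_{i_0}^{-1}(W)$ of the members $W$ of an affinoid covering of $Y_{i_0}$ form an affinoid covering of $X_{i_0}$. Fixing such a $W$ and writing $V_i\subset Y_i$, $U_i\subset X_i$ for the preimages of $W$ under $Y_i\to Y_{i_0}$ and $X_i\to X_{i_0}$ (for $i\geq i_0$), the cartesian hypothesis gives $U_i\simeq U_{i_0}\times_{V_{i_0}}V_i$ together with finite transition maps; by Lemma~\ref{L:sub} there are open subspaces $\widetilde V_W:=Y\times_{Y_{i_0}}W\subset Y$ and $\widetilde U_W:=X\times_{X_{i_0}}\pi_{i_0}^{-1}(W)\subset X$ with $\widetilde V_W\sim\varprojlim_iV_i$ and $\widetilde U_W\sim\varprojlim_iU_i$, the latter being perfectoid as an open subspace of the perfectoid space $X$. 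Since the $\widetilde V_W$ (resp. $\widetilde U_W$) cover $Y$ (resp. $X$), it is enough to treat each $W$ drawn from a suitable covering and then glue.

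So fix $W$; abbreviate $\pi_{i_0}^{-1}(W)=\Spa(A_{i_0},A_{i_0}^+)$, $W=\Spa(B_{i_0},B_{i_0}^+)$, and let $(A_i,A_i^+)$, $(B_i,B_i^+)$ be the Huber pairs of $U_i$, $V_i$. Put $(A,A^+)=\wh{\varinjlim_i(A_i,A_i^+)}$ and $(B,B^+)=\wh{\varinjlim_i(B_i,B_i^+)}$, the $\varpi$-adic completions. Because $A_{i_0}$ is a finite projective faithfully flat $B_{i_0}$-algebra and the completion of a finite module is the base change of the completion, the cartesian hypothesis yields $A\simeq A_{i_0}\otimes_{B_{i_0}}B$; thus $B\to A$ is faithfully finite \'etale and $A^+$ is integral over $B^+$. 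The key point is that $A$ is a perfectoid $k$-algebra; granting this, the computations of Proposition~\ref{P:quotient}(1),(3) carry over with the finite \'etale extension $B\to A$ in place of the $G$-action: the trace pairing splits $B\to A$, so $B$ is a closed subring of the uniform ring $A$ carrying the subspace topology, hence $B$ is uniform and in fact stably uniform (checking rational subsets exactly as in loc. cit. via the corresponding rational localization of $A$), and by \cite{KL} Proposition~3.6.22(a) $B$ is perfectoid. To see that $A$ is perfectoid I would arrange the covering of $Y_{i_0}$ so that $\pi_{i_0}^{-1}(W)$ is one of the affinoids produced by Lemma~\ref{L:perfectoid-local} applied to $(X_i)_{i\geq i_0}$, i.e. so that $\widetilde U_W$ is affinoid perfectoid with ring $A=\wh{\varinjlim_iA_i}$. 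Such $W$ can be made to cover $Y_{i_0}$: a finite \'etale morphism of locally noetherian adic spaces over $\C_p$ splits completely over a neighbourhood of every classical point, and classical points are dense, so one may cover $Y_{i_0}$ by small rational subsets $W$ over which $\pi_{i_0}^{-1}(W)$ is a finite disjoint union of copies of $W$, arranged to lie inside the affinoids of Lemma~\ref{L:perfectoid-local}, and a finite disjoint union of affinoid perfectoids is affinoid perfectoid. (Alternatively, one first passes to the Galois closure $\widetilde Y_{i_0}\to Y_{i_0}$ of $\pi_{i_0}$, base-changed along $Y_i\to Y_{i_0}$; its pullback to $X$ is finite \'etale over the perfectoid $X$, hence perfectoid by \cite{Sch1}, and one is reduced to a free finite group action, to which Corollary~\ref{C:quotient} applies.)

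Finally, by Lemma~\ref{L:existence} together with the stable uniformity just proved, one identifies $\widetilde V_W$ with the affinoid perfectoid space $\Spa(B,B^+)$, while $\Spa(A,A^+)\sim\varprojlim_iU_i$ realizes $\widetilde U_W$; the finite \'etale morphism $\Spa(A,A^+)\to\Spa(B,B^+)$ induced by $B\to A$ is, by construction, the pullback of $U_i\to V_i$ for every $i\geq i_0$. Gluing over the chosen covering --- the local constructions are compatible with refinement and with the transition maps --- one obtains that $Y$ is covered by affinoid perfectoid opens, hence is perfectoid, together with a finite \'etale morphism $X\to Y$ which is the pullback of $X_i\to Y_i$ for every $i\geq i_0$. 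The step I expect to be the genuine obstacle is precisely ensuring that $A$ is perfectoid, i.e. choosing the affinoid coverings of $X_{i_0}$ and $Y_{i_0}$ compatibly enough that $\pi_{i_0}^{-1}(W)$ falls among the ``good'' affinoids of Lemma~\ref{L:perfectoid-local}; the remainder is the Kedlaya--Liu descent of Proposition~\ref{P:quotient} together with formal properties of $\sim$-limits.
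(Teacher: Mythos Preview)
Your approach is essentially the same as the paper's: reduce to the affinoid situation and apply \cite{KL} Proposition~3.6.22 to the faithfully finite \'etale extension $B\to A$ to conclude that $B$ is perfectoid. The paper's proof is considerably terser than yours: it simply asserts the reduction ``we may assume $X_i=\Spa(A_i,A_i^+)$, $X=\Spa(A,A^+)$, $Y_i=\Spa(B_i,B_i^+)$, $Y=\Spa(B,B^+)$ are affinoid, with $(A,A^+)=\wh{\varinjlim(A_i,A_i^+)}$, $(B,B^+)=\wh{\varinjlim(B_i,B_i^+)}$, and $A$ perfectoid'', then observes that $B\to A$ is faithfully finite \'etale (being a limit of such), and invokes Kedlaya--Liu. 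In particular, the paper does not spell out the compatibility of affinoid coverings on the $X$-side and the $Y$-side that you worry about; it leaves this to the reader.

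You have correctly identified that the only nontrivial point in this reduction is ensuring the preimage in $X$ of an affinoid in $Y_{i_0}$ is affinoid \emph{perfectoid}, and your two proposed fixes (split locus over classical points, or passage to a Galois closure and then Corollary~\ref{C:quotient}) are both reasonable. One caution: the split-locus argument, as written, uses that finite \'etale covers split over neighbourhoods of classical points, which requires the residue fields there to be separably closed --- fine over $\C_p$, but not over a general perfectoid $k$. The Galois-closure alternative avoids this. In any case your extra care here goes beyond what the paper supplies, rather than deviating from its line of argument.
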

\begin{proof}
We may assume that $X_i=\Spa(A_i, A_i^+)$, 
$X=\Spa(A, A^+)$, $Y_i=\Spa(B_i, B_i^+)$ , $Y=\Spa(B, B^+)$ are affinoid, with 
\[(A, A^+)=\wh{\varinjlim_i (A_i, A_i^+)},\quad (B, B^+)=\wh{\varinjlim_i (B_i, B_i^+)}\]
and $A$ a perfectoid $k$-algebra. By assumption, we get finite \'etale morphisms of affinoid $k$-algebras
\[(B_i, B_i^+)\ra (A_i, A_i^+),\]taking limit and $\varpi$-adic completion ($\varpi$ is a pseudo-uniformizer of $k$) we get a finite \'etale morphism
\[(B, B^+)\ra (A, A^+),\]
which is the pushforward 
\[\xymatrix{(B_i, B_i^+)\ar[r]\ar[d]& (A_i, A_i^+)\ar[d]\\
	(B, B^+)\ar[r]& (A, A^+)
}\]
for any $i\geq i_0$. By assumption, $A_i$ is a faithfully finite \'etale $B_i$-algebra (see the proof of Proposition 2.5 (3)), hence $A$ is a faithfully finite \'etale $B$-algebra. As $A$ is perfectoid, by \cite{KL} Proposition 3.6.22, $B$ is perfectoid too.
\end{proof}
In the next section, we will apply the above proposition in the case that $X_i\ra Y_i$ are finite \'etale Galois covers with Galois groups $G_i$, such that the groups $G_i$ are constant for $i$ large enough. Let $G=G_i$ for $i$ sufficiently large, then it acts freely on $X$. Let $X, (X_i), (Y_i)$ be as in the above proposition, but without the assumption of the existence of $Y$. 
\begin{corollary}\label{C:Quotient}
 There exists an affinoid perfectoid covering of $X$ with each affinoid space stable under the action of $G$. In particular, the quotient $X/G$ exists as a perfectoid space $Y$ by Corollary \ref{C:quotient}. Moreover, $Y\sim \varprojlim_i Y_i$.
\end{corollary}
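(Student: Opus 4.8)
The plan is to reduce the existence of a $G$-stable affinoid perfectoid covering of $X$ to the pullback structure already furnished by the hypotheses, and then invoke Corollary~\ref{C:quotient} to form $Y$, finally checking the $\sim$-relation $Y\sim\varprojlim_i Y_i$ by a Galois-descent argument at the level of rings. First I would fix $i_0\in I$ as in the setup of Proposition~\ref{prop} (the index beyond which $G_i=G$ is constant and $X_i\to Y_i$ is the pullback of $X_{i_0}\to Y_{i_0}$). By Lemma~\ref{L:perfectoid-local}, there is an affinoid covering $(\Spa(B_{i_0},B_{i_0}^+))$ of $X_{i_0}$... more precisely of $Y_{i_0}$ is what I want: I would choose an affinoid covering $(\Spa(B_{i_0},B_{i_0}^+))$ of $Y_{i_0}$, let $\Spa(A_{i_0},A_{i_0}^+)$ be its preimage in $X_{i_0}$ under the finite \'etale cover, and let $\Spa(A_i,A_i^+)$, $\Spa(B_i,B_i^+)$ be the pullbacks to $X_i$, $Y_i$. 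Then the pullbacks $\Spa(A,A^+)$ of $\Spa(A_{i_0},A_{i_0}^+)$ to $X$ form, after refining the covering of $Y_{i_0}$ if necessary, a perfectoid affinoid covering of $X$ by Lemma~\ref{L:perfectoid-local}, and $(A,A^+)\simeq\wh{\varinjlim_i(A_i,A_i^+)}$.

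Next I would observe that each such $\Spa(A,A^+)$ is $G$-stable: the $G$-action on $X_i$ (for $i\ge i_0$) is the deck action of the Galois cover $X_i\to Y_i$, so it preserves $\Spa(A_i,A_i^+)=(\text{pullback of }\Spa(B_i,B_i^+))$; passing to the limit and $\varpi$-adic completion, $G$ preserves $\Spa(A,A^+)$. Moreover the $G$-action on $X$ is free in the sense of subsection~2.3: freeness of the $G$-action on $X$ follows because freeness holds at each finite level $X_i\to Y_i$ (a finite \'etale Galois cover has a free deck action) and $X\times G\sim\varprojlim_i (X_i\times G)$ while $X\times_k X\sim\varprojlim_i(X_i\times_k X_i)$, so the monomorphism property is inherited in the limit. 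This gives precisely the hypotheses of Corollary~\ref{C:quotient}, so the quotient $X/G$ exists as a perfectoid space $Y$, together with a finite \'etale Galois cover $X\to Y$ with group $G$.

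Finally, to prove $Y\sim\varprojlim_i Y_i$, I would argue affinoid-locally. On each member of the covering, set $(B,B^+)=(A^G,(A^+)^G)$ as in Proposition~\ref{P:quotient}; since $A\simeq\wh{\varinjlim_i A_i}$ and taking $G$-invariants commutes with the filtered colimit and with $\varpi$-adic completion (because $G$ is finite, so $(-)^G$ is exact and continuous), we get $(B,B^+)\simeq\wh{\varinjlim_i(B_i,B_i^+)}$, where $(B_i,B_i^+)=(A_i^G,(A_i^+)^G)$ and $\Spa(B_i,B_i^+)$ is exactly the corresponding affinoid of $Y_i$ (using that $X_i\to Y_i$ is a finite \'etale $G$-cover, so $B_i=A_i^G$ by \cite{SGA} as in the proof of Proposition~\ref{P:quotient}(2)). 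Then Lemma~\ref{L:existence}, or directly \cite{SW} Proposition~2.4.2, gives $\Spa(B,B^+)\sim\varprojlim_i\Spa(B_i,B_i^+)$; these glue over the covering to yield $Y\sim\varprojlim_i Y_i$, using that the homeomorphism $|Y|\to\varprojlim_i|Y_i|$ can be checked locally and follows from $|Y|=|X|/G$, $|Y_i|=|X_i|/G$ together with $|X|\xrightarrow{\sim}\varprojlim_i|X_i|$.

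The main obstacle I expect is the bookkeeping in the first two steps: one must ensure that a single affinoid covering of $Y_{i_0}$ can be chosen so that (a) its preimage in $X_{i_0}$ refines to a perfectoid affinoid covering of $X$ compatibly with Lemma~\ref{L:perfectoid-local}, and (b) each resulting affinoid of $X$ is genuinely $G$-stable (not merely $G$-stable up to the equivalence relation) — this relies on the deck transformations of $X_i\to Y_i$ being defined over $k$ and commuting with the transition maps, which is exactly the compatibility hypothesis in Proposition~\ref{prop}. Verifying that $(-)^G$ commutes with $\varpi$-adic completion for these Banach $k$-algebras, while elementary, is the other point that needs a line of care since completion is not exact in general; here one uses that $A$ is a faithfully finite \'etale (hence finite projective) $B$-module, so $A\simeq B\otimes_{B}A$ with $B$ a direct summand, and completions behave well.
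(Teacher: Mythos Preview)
Your proposal is correct and follows essentially the same approach as the paper: choose an affinoid covering of $Y_{i_0}$, pull it back through the finite \'etale Galois cover to get a $G$-stable affinoid covering of $X_{i_0}$, take limits and complete to obtain a $G$-stable affinoid perfectoid covering of $X$ (shrinking the base covering if necessary), and then invoke Corollary~\ref{C:quotient}. The paper's proof is considerably terser, simply asserting that ``it is easy to see $Y\sim\varprojlim_i Y_i$'' once $G=G_i$ is constant, whereas you spell out the Galois-descent identification $(B,B^+)\simeq\wh{\varinjlim_i(B_i,B_i^+)}$ and the topological check $|Y|=|X|/G\simeq\varprojlim_i|X_i|/G$; your extra care here is welcome but not a different method.
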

\begin{proof}
	Let $\Spa(B_i, B_i^+)\subset Y_i$ be an affinoid space. Let $\Spa(A_i, A_i^+)\subset X_i$ be the inverse image of $\Spa(B_i, B_i^+)$ under the finite \'etale Galois cover $X_i\ra Y_i$. Set $(A, A^+)=\wh{\varinjlim_i (A_i, A_i^+)}$, then shrinking $\Spa(B_i, B_i^+)$ if necessary, these $\Spa(A, A^+)$ form an affinoid perfectoid covering of $X$ with each affinoid space stable under the action of $G$. Let $Y=X/G$ be the perfectoid space constructed by Corollary \ref{C:quotient}. As $G=G_i$ for $i$ sufficiently large, it is easy to see $Y\sim \varprojlim_i Y_i$.
\end{proof}

\subsection{Some useful lemmas}
The following lemma will be used.
\begin{lemma}\label{L:profinite}
	Let $X$ be a profinite set, and $k$ be a perfectoid field as above. Then there exists a perfectoid space $X^{ad}$ over $k$, such that the underlying topological space $|X^{ad}|$ is homeomorphic to $X$.
\end{lemma}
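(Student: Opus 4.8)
The plan is to write the profinite set $X$ as a filtered inverse limit $X = \varprojlim_{j\in J} X_j$ of finite discrete sets, with surjective transition maps, and then build a compatible system of perfectoid spaces over $k$ realizing this limit. For a finite set $X_j = \{x_1,\dots,x_{n_j}\}$, take the perfectoid space $X_j^{ad}$ to be the disjoint union $\coprod_{i=1}^{n_j}\Spa(k,k^\circ)$ of $n_j$ copies of the point $\Spa(k,k^\circ)$; concretely this is $\Spa(A_j, A_j^+)$ with $A_j = \prod_{i=1}^{n_j} k$ (a product of finitely many copies of $k$, so a perfectoid $k$-algebra) and $A_j^+ = \prod_{i=1}^{n_j} k^\circ$. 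A transition map $X_{j'}\to X_j$ of finite sets induces the obvious ring map $A_j \to A_{j'}$ sending the factor indexed by $x_i$ diagonally into the factors indexed by its preimage; the corresponding map $X_{j'}^{ad}\to X_j^{ad}$ is finite (indeed it is a finite disjoint sum of identity maps and empty maps on the point), so we have a filtered inverse system $(X_j^{ad})_{j\in J}$ of perfectoid affinoid spaces over $k$ with finite transition maps.

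Next I would apply Lemma~\ref{L:existence} (or, more directly, \cite{SW} Proposition 2.4.2 in the affinoid setting) to produce an adic space $X^{ad}$ over $k$ with $X^{ad}\sim\varprojlim_j X_j^{ad}$. The hypothesis to check is stable uniformity of the completed colimit $(A,A^+) = \wh{\varinjlim_j (A_j, A_j^+)}$, where $A_j = \prod_{i=1}^{n_j} k$. This colimit is nothing but the ring $C(X,k)$ of continuous ($=$ locally constant) functions from the profinite set $X$ to $k$, completed for the $\vp$-adic (equivalently, supremum) topology; it is the ring of continuous $k$-valued functions on $X$, with $A^+ = C(X,k^\circ)$. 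Such a ring is perfectoid: a pseudo-uniformizer $\vp$ of $k$ with $\vp^p \mid p$ serves as a pseudo-uniformizer of $A$, the Frobenius on $A/\vp = C(X, k^\circ/\vp)$ is surjective since it is already so on the coefficients $k^\circ/\vp$ (and acts coefficient-wise), and $A$ is uniform because a supremum norm on a function ring is power-multiplicative. Since a perfectoid ring is stably uniform (each rational localization of a perfectoid affinoid is again perfectoid by \cite{Sch1}, hence uniform), the hypothesis of Lemma~\ref{L:existence} holds, and in fact $X^{ad} = \Spa(A, A^+)$ is itself a perfectoid affinoid space over $k$.

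Finally I would identify the underlying topological space. By Definition~\ref{def}, the map $|X^{ad}| \to \varprojlim_j |X_j^{ad}|$ is a homeomorphism; and $|X_j^{ad}| = X_j$ as finite discrete sets (the adic spectrum of a finite product of copies of $k$ is the finite set of its points, with the discrete topology), with transition maps matching those of the system $(X_j)$. Hence $|X^{ad}| \cong \varprojlim_j X_j = X$ as topological spaces, which is exactly the claim. I expect the only real content — and thus the main point requiring care rather than a genuine obstacle — to be the verification that $C(X,k)$ (completed) is a perfectoid $k$-algebra, i.e. checking uniformity and surjectivity of Frobenius mod $\vp$; everything else is formal manipulation of finite disjoint unions and the inverse-limit formalism already set up in Lemmas~\ref{L:sub}–\ref{L:perfectoid-local}.
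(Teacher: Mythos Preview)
Your proof is correct and arrives at exactly the same object as the paper: $X^{ad}=\Spa(R,R^+)$ with $R=C(X,k)$ and $R^+=C(X,k^\circ)$. The paper's proof consists entirely of a citation to \cite{Sch4} Remark 8.2.2 together with this description of $(R,R^+)$, whereas you supply the actual verification (via the inverse-limit machinery of Lemma~\ref{L:existence}) that this Huber pair is perfectoid and that its adic spectrum has underlying space $X$; your argument is thus a self-contained expansion of what the paper leaves to the reference, and the remark following the lemma in the paper records precisely the relation $X^{ad}\sim\varprojlim_i X_i^{ad}$ that you use as your construction.
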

\begin{proof}
	See \cite{Sch4} Remark 8.2.2. It is given by $X^{ad}=\Spa(R, R^+)$ with $R$ (resp. $R^+$) the ring of continuous functions from $X$ to $k$ (resp. $k^\circ$).
\end{proof}
If we write $X=\varprojlim_iX_i$ with each $X_i$ finite, then the above perfectoid space $X^{ad}$ is such that $X^{ad}\sim\varprojlim_iX_i^{ad}$, where $X_i^{ad}$ is the finite perfectoid space over $k$ attached to $X_i$. Sometimes by abuse of notation, we denote also by $X$ the perfectoid space associated to the profinite set $X$.

The next lemma will not be used explicitly in the following\footnote[2]{See Remark \ref{R}.}, but it is useful in the setting of products of Shimura varieties.
\begin{lemma}\label{L:product}
Let $(X_i)_{i\in I}$ and $(Y_i)_{i\in I}$ be two filtered inverse systems of adic spaces over $k$ as in Definition \ref{def}. Let $X, Y$ be  adic spaces over $k$ such that $X\sim \varprojlim_i X_i$, $Y\sim\varprojlim_i Y_i$. Then we have
\[X\times_k Y\sim \varprojlim_i X_i\times_k Y_i.\]
\end{lemma}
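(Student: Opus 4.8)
The plan is to reduce to the affinoid case and then invoke the local description of fiber products of adic spaces together with the already-established criterion for recognizing a space as an inverse limit in the sense of Definition~\ref{def}. First I would recall that both conditions in Definition~\ref{def} are local on $X\times_k Y$, so it suffices to exhibit an affinoid cover of $X\times_k Y$ on which the defining properties hold. Since $X\sim\varprojlim_i X_i$ and $Y\sim\varprojlim_i Y_i$, there are affinoid covers of $X$ (resp. $Y$) by opens $\Spa(A,A^+)\subset X$ (resp. $\Spa(B,B^+)\subset Y$) such that $\varinjlim_{\Spa(A_i,A_i^+)\subset X_i}A_i\to A$ and $\varinjlim_{\Spa(B_i,B_i^+)\subset Y_i}B_i\to B$ have dense image, where $\Spa(A_i,A_i^+)\subset X_i$ runs over affinoids through which the map $\Spa(A,A^+)\to X_i$ factors, and similarly for $B$. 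For such a pair, $\Spa(A,A^+)\times_k\Spa(B,B^+)$ is an open affinoid of $X\times_k Y$ of the form $\Spa(C,C^+)$ where $C=A\,\widehat{\otimes}_k\,B$ (completed tensor product), and as the pair $(\Spa(A,A^+),\Spa(B,B^+))$ varies these cover $X\times_k Y$. Correspondingly $\Spa(A_i\,\widehat{\otimes}_k\,B_i, (A_i\otimes_k B_i)^{+})$ is an open affinoid of $X_i\times_k Y_i$ through which $\Spa(C,C^+)\to X_i\times_k Y_i$ factors.

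For the topological-space condition I would argue that $|X\times_k Y|\to|X|\times_{|\Spa(k,k^\circ)|}|Y|$ need not be a homeomorphism in general, but $\varprojlim_i|X_i\times_k Y_i|$ is computed as an inverse limit of topological spaces, and one checks directly on the chosen affinoid charts that the continuous map $|X\times_k Y|\to\varprojlim_i|X_i\times_k Y_i|$ is a homeomorphism: it is a bijection because a point of the left side is a valuation on $C=A\,\widehat{\otimes}_k\,B$ restricting compatibly to the $A_i\,\widehat{\otimes}_k\,B_i$, and since $\varinjlim_i(A_i\otimes_k B_i)$ has dense image in $C$ (being the tensor product of two dense images), such compatible data determine the point; it is a homeomorphism because rational subsets of $\Spa(C,C^+)$ are, up to the density, pulled back from finite levels. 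For the density condition, I would note $\varinjlim_i (A_i\otimes_k B_i) = \big(\varinjlim_i A_i\big)\otimes_k\big(\varinjlim_i B_i\big)$ has dense image in $A\,\widehat{\otimes}_k\,B=C$ because $A_i\to A$ and $B_i\to B$ have dense image and the completed tensor product is the completion of the algebraic tensor product; here one must check that the relevant $\Spa(A_i\,\widehat{\otimes}_k\,B_i,\cdot)\subset X_i\times_k Y_i$ are cofinal among the affinoids of $X_i\times_k Y_i$ through which $\Spa(C,C^+)\to X_i\times_k Y_i$ factors, which follows because every affinoid neighborhood in $X_i\times_k Y_i$ of the image contains one of product form by definition of the fiber product of affinoid adic spaces.

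The main obstacle I expect is the bookkeeping around the direct limits indexing set in Definition~\ref{def}: one must show that the product affinoids $\Spa(A_i\,\widehat{\otimes}_k\,B_i,\cdot)$ through which $\Spa(C,C^+)$ factors are cofinal in the full diagram of affinoids of $X_i\times_k Y_i$ with this property, so that restricting the direct limit to product charts does not change its colimit and hence does not affect the density statement. Granting this cofinality — which is a formal consequence of the local structure of fiber products of adic spaces, since any affinoid open of $X_i\times_k Y_i$ containing a point is refined by one of the form $U\times_k V$ with $U\subset X_i$, $V\subset Y_i$ affinoid opens — the density of $\varinjlim_i(A_i\otimes_k B_i)$ in $C$ gives the second condition, and combined with the homeomorphism on topological spaces this yields $X\times_k Y\sim\varprojlim_i X_i\times_k Y_i$ as desired. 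Finally, the compatible family of morphisms $X\times_k Y\to X_i\times_k Y_i$ is of course the product of the given families $X\to X_i$ and $Y\to Y_i$.
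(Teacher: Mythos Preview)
Your proof is correct and is precisely the direct verification the paper intends: the paper's own proof is the single sentence ``One checks directly by definition.'' One small remark: the cofinality concern you flag is not actually needed, since once the product affinoids $\Spa(A_i\,\widehat{\otimes}_k\,B_i,\cdot)$ lie in the indexing diagram and their colimit already has dense image in $C=A\,\widehat{\otimes}_k\,B$, the full colimit over \emph{all} affinoids of $X_i\times_k Y_i$ through which $\Spa(C,C^+)$ factors a fortiori has dense image.
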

\begin{proof}
	One checks directly by definition.
\end{proof}

\section{Perfectoid Shimura varieties of abelian type}
To motivate our construction, we begin with some review of the theory of geometric connected components of Shimura varieties. For a connected reductive group $G$ over $\Q$, we denote by $G^{der}$ and $G^{ad}$ the associated derived subgroup and the adjoint group respectively. The identity element of a group will be denoted by $e$ usually.

\subsection{Geometric connected components of Shimura varieties}

\subsubsection{The rational case}
As in \cite{D},  an important technique for proving results about Shimura varieties is the reduction to a problem about connected Shimura varieties. We first review some constructions in \cite{D}.

Let $(G, X)$ be a Shimura datum. We consider the associated connected Shimura varieties. So fix a connected component $X^+$ of $X$, and consider the triplet $(G^{ad}, G^{der}, X^+)$. For any open compact subgroup $K\subset G(\A_f)$, let $\Sh_K(G,X)_\C$ be the associated Shimura variety over the complex field $\C$. Consider the inverse limit \[\Sh(G,X)_\C=\varprojlim_{K}\Sh_K(G,X)_\C,\] which is a scheme (not of finite type) over $\C$. Let $\Sh^0(G,X)_\C$ be the connected component of $\Sh(G, X)_\C$ containing the image of $X^+\times\{e\}\subset X\times G(\A_f)$, which is also given by the projective limit
\[\Sh^0(G,X)_\C=\varprojlim_{K}\Sh_{K}^0(G,X)_\C,\]with $\Sh_{K}^0(G,X)_\C$ defined as the connected component of $\Sh_K(G, X)_\C$ containing the image of $X^+\times\{e\}\subset X\times G(\A_f)$, for each $K\subset G(\A_f)$. The scheme $\Sh^0(G,X)_\C$ depends only on $(G^{ad}, G^{der}, X^+)$. More precisely, let $G^{ad}(\R)^+$ be the connected component of $G^{ad}(\R)$ containing the unit for the real topology, and let $G^{ad}(\Q)^+=G^{ad}(\Q)\cap G^{ad}(\R)^+$. We define $\tau(G^{der})$ to be the linear topology on $G^{ad}(\Q)$ for which the images in $G^{ad}(\Q)$ of the congruence subgroups in $G^{der}(\Q)$ form a fundamental system of neighborhood of the identity. Then the connected Shimura variety $\Sh^0(G,X)_\C$ is given by the projective limit (cf. \cite{D} 2.1.8)
\[\Sh^0(G,X)_\C=\varprojlim_\Gamma\Gamma\setminus X^+,\]where $\Gamma$ runs through the arithmetic subgroups of $G^{ad}(\Q)^+$ which are open in $\tau(G^{der})$. Here and in the following, we use the convention as in \cite{D} 2.1.2 and \cite{Mo} 1.5: we view $\Gamma\setminus X^+$ as the algebraic varieties over $\C$ by using the complex GAGA. The system $\Big(\Gamma\setminus X^+\Big)_\Gamma$ is equipped with an action on the right of $G^{ad}(\Q)^+$ by
\[\Gamma\setminus X^+\ra \gamma^{-1}\Gamma\gamma\setminus X^+, \quad [x]\mapsto [\gamma^{-1} x]\] for any $\gamma\in G^{ad}(\Q)^+$. Therefore, the scheme $\Sh^0(G,X)_\C$ is equipped with a continuous action on the right (see the paragraph above Proposition \ref{P:delgine}) of the completion $G^{ad}(\Q)^{+\wedge}$ of $G^{ad}(\Q)^+$ for the topology $\tau(G^{der})$. We will denote this completion also as \[G^{ad}(\Q)^{+\wedge} \big(\tr{rel}. \tau(G^{der})\big)\]
 to make precise the topology. Since $\cap \Gamma=1$, where $\Gamma$ runs through arithmetic subgroups of $G^{ad}(\Q)^+$ open in the topology $\tau(G^{der})$, we can identify $G^{ad}(\Q)^+$ as a subgroup of $G^{ad}(\Q)^{+\wedge}$. Sometimes we also denote the connected Shimura variety $\Sh^0(G,X)_\C$ by $\Sh^0(G^{ad}, G^{der}, X^+)_\C$. In fact, for any triplet $(G, G', X^+)$ consisting of an adjoint group $G$, a cover $G'$ of $G$, and a $G(\R)^+$-conjugacy class of morphisms $h: \mathbb{S}\ra G_\R$ which satisfy the axioms as in the definition of a Shimura datum (cf. \cite{D} 2.1.1.1-2.1.1.3), we can define the scheme $\Sh^0(G,G',X^+)_\C$ in a similar way as above, cf. \cite{Mi1} II.1 for example.
 
 Let the notations be as above. We can recover the scheme $\Sh(G, X)_\C$ from the connected scheme $\Sh^0(G,X)_\C$. First, we recall the construction of \cite{D} 2.0.1, see also \cite{Ki} 3.3.1. Let $G$ be a group equipped with an action of a group $H$, and $\Gamma\subset G$ a $H$-stable subgroup. Suppose given an $H$-equivariant map $\varphi: \Gamma\ra H$, where $H$ acts on itself by inner automorphisms, and suppose that for $\gamma\in \Gamma$, $\varphi(\gamma)$ acts on $G$ as inner conjugation by $\gamma$. Then the elements of the form $(\gamma,\varphi(\gamma)^{-1})$ form a normal subgroup of the semi-product $G\rtimes H$. We denote 
 \[G\ast_\Gamma H\] the quotient of $G\rtimes H$ by this normal subgroup. Now we return to the previous notations. Let $Z\subset G$ be the center of the reductive group, and we denote by $\ov{Z(\Q)}$ the closure of $Z(\Q)$ in $G(\A_f)$. Let $G(\R)_+$ be the preimage of $G^{ad}(\R)^+$ under the map $G(\R)\ra G^{ad}(\R)$. Set $G(\Q)_+=G(\Q)\cap G(\R)_+, G^{der}(\Q)_+=G^{der}(\Q)\cap G(\Q)_+$. We get natural maps $G(\Q)_+\ra G^{ad}(\Q)^+, G^{der}(\Q)_+\ra G^{ad}(\Q)^+$. Following Kisin, we denote
 \[\mathcal{A}(G)=G(\A_f)/\ov{Z(\Q)}\ast_{G(\Q)_+/Z(\Q)}G^{ad}(\Q)^+.\] By \cite{D} 2.1.13, this group acts on the right on $\Sh(G, X)_\C$.
 Let $\ov{G(\Q)}_+$ be the closure of $G(\Q)_+$ in $G(\A_f)$ and set
 \[\mathcal{A}(G)^0=\ov{G(\Q)}_+/\ov{Z(\Q)}\ast_{G(\Q)_+/Z(\Q)}G^{ad}(\Q)^+.\]Then $\mathcal{A}(G)^0$ depends only on $G^{der}$. In fact, we have the equalities (cf. \cite{D} 2.1.15.1,  2.1.6.2)
 \[\begin{split} \mathcal{A}(G)^0&=G^{ad}(\Q)^{+\wedge}\big(\tr{rel}. \tau(G^{der})\big)\\
 &=\ov{G^{der}(\Q)}_+\ast_{G^{der}(\Q)_+}G^{ad}(\Q)^+\\
 &=\rho \big(G^{sc}(\A_f)\big)\ast_{\Gamma}G^{ad}(\Q)^+,
 \end{split}\]
 where $\rho: G^{sc}\ra G^{der}$ is the simply connected cover, and \[\Gamma=\rho \big(G^{sc}(\A_f)\big)\cap G^{der}(\Q)\subset G^{der}(\Q)^+.\]
 The group $\mathcal{A}(G)$ acts transitively on the set $\pi_0(\Sh(G,X)_\C)$ of connected components of $\Sh(G,X)_\C$, and the stabilizer of the component $\Sh^0(G,X)_\C$ is given by $\mathcal{A}(G)^0$. The action of $ \mathcal{A}(G)^0$ on $\Sh^0(G, X)_\C$, induced by that of $\Al(G)$ on $\Sh(G, X)_\C$, coincides with the right action described in the above paragraph. In particular, the profinite set $\pi_0(\Sh(G,X)_\C)$ is a principal homogenous space under the abelian group \[\Al(G)/\Al(G)^0= G(\A_f)/\ov{G(\Q)}_+,\] cf. \cite{D} 2.1.16.
 There is an $\Al(G)$-equivariant map \[\Sh(G,X)_\C\ra G(\A_f)/\ov{G(\Q)}_+,\] and
 the scheme $\Sh^0(G,X)_\C$ is isomorphic to the fiber at $e$ of this map. On the other hand,
 the scheme $\Sh(G,X)_\C$ can be recovered by $\Sh^0(G,X)_\C$ by some induction from $\mathcal{A}(G)^0$ to $\mathcal{A}(G)$ in the following sense:
 \begin{proposition}\label{P:induction}
 	We have the following identity
 	\[\Sh(G,X)_\C=[\Al(G)\times \Sh^0(G,X)_\C]/\Al(G)^0,\]where $\Al(G)^0$ acts on the scheme $\Al(G)\times \Sh^0(G,X)_\C$ by $(\gamma',s)\gamma=(\gamma^{-1}\gamma',s\gamma)$.
 \end{proposition}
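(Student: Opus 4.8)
The plan is to write down the evident comparison morphism and verify that it induces an isomorphism on the quotient, using only the description of geometric connected components recalled above. Define
\[\Phi:\Al(G)\times\Sh^0(G,X)_\C\lra \Sh(G,X)_\C,\qquad (\gamma',s)\longmapsto s\cdot\gamma',\]
where $s\cdot\gamma'$ is formed via the right action of $\Al(G)$ on the whole Shimura scheme $\Sh(G,X)_\C$, and $\Al(G)\times\Sh^0(G,X)_\C$ is the evident scheme over $\C$, a disjoint union of copies of $\Sh^0(G,X)_\C$ indexed by the profinite set $\Al(G)$. The facts I would feed in are: (i) $\pi_0(\Sh(G,X)_\C)$ is a torsor under the abelian group $\Al(G)/\Al(G)^0=G(\A_f)/\ov{G(\Q)}_+$; (ii) the $\Al(G)$-equivariant structure map $\pi\colon\Sh(G,X)_\C\to\Al(G)/\Al(G)^0$ identifies $\Sh^0(G,X)_\C$ with the fibre over the base point $e$; and (iii) $\Al(G)^0$ is the stabiliser of the component $\Sh^0(G,X)_\C$, acting on it through the restriction of the $\Al(G)$-action, which is exactly the action described earlier.

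Next I would check that $\Phi$ is constant on the $\Al(G)^0$-orbits for the prescribed action $(\gamma',s)\gamma=(\gamma^{-1}\gamma',s\gamma)$: since $\gamma\in\Al(G)^0$ stabilises $\Sh^0(G,X)_\C$, the element $s\gamma$ again lies in $\Sh^0(G,X)_\C$, and
\[\Phi\big((\gamma',s)\gamma\big)=\Phi(\gamma^{-1}\gamma',\,s\gamma)=(s\gamma)\cdot(\gamma^{-1}\gamma')=s\cdot(\gamma\gamma^{-1}\gamma')=s\cdot\gamma'=\Phi(\gamma',s).\]
Hence $\Phi$ descends to a morphism $\wt\Phi\colon[\Al(G)\times\Sh^0(G,X)_\C]/\Al(G)^0\to\Sh(G,X)_\C$.

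It then remains to see that $\wt\Phi$ is an isomorphism. For surjectivity, given a point $t$ of $\Sh(G,X)_\C$, pick $\gamma'\in\Al(G)$ with $\pi(t)=\gamma'\Al(G)^0$ (possible by (i)); by equivariance $\pi\big(t\cdot(\gamma')^{-1}\big)=e$, so $s:=t\cdot(\gamma')^{-1}\in\Sh^0(G,X)_\C$ and $\Phi(\gamma',s)=t$. For injectivity, if $s_1\cdot\gamma_1'=s_2\cdot\gamma_2'$ with $s_i\in\Sh^0(G,X)_\C$, then applying $\pi$ and using $\pi(s_i)=e$ gives $\gamma_1'\Al(G)^0=\gamma_2'\Al(G)^0$, so $\gamma:=\gamma_1'(\gamma_2')^{-1}\in\Al(G)^0$; then $s_2=s_1\cdot\gamma\in\Sh^0(G,X)_\C$ and one checks $(\gamma_1',s_1)=(\gamma_2',s_2)\cdot\gamma^{-1}$, so the two pairs lie in a common $\Al(G)^0$-orbit. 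Finally, to upgrade this bijection to an isomorphism of schemes, note that $\Phi$ restricts to an isomorphism of each copy $\{\gamma'\}\times\Sh^0(G,X)_\C$ onto the connected component $\Sh^0(G,X)_\C\cdot\gamma'$ of $\Sh(G,X)_\C$ (the action of $\gamma'$ is an automorphism of the scheme); both sides are thus fibred over the profinite base $\Al(G)/\Al(G)^0$ with all fibres isomorphic to $\Sh^0(G,X)_\C$, and $\wt\Phi$ is a morphism over this base which is an isomorphism on each fibre and a bijection on the base — hence an isomorphism.

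I do not expect a genuine obstacle: the statement is a repackaging of Deligne's description of $\pi_0(\Sh(G,X)_\C)$ together with the $\Al(G)$-action, all of which is recalled above (this is essentially \cite{D} 2.1.16). The only delicate points are the left/right bookkeeping for the actions of $\Al(G)$ and $\Al(G)^0$ — eased by the fact that $\Al(G)/\Al(G)^0$ is abelian, so $\Al(G)^0$ is normal and left and right cosets coincide — and the observation that the comparison is scheme-theoretic rather than merely on underlying point sets, which is immediate once one knows that $\Phi$ is an isomorphism on each copy of $\Sh^0(G,X)_\C$.
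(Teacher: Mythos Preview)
Your argument is correct and is precisely the content of Deligne's Lemme~2.7.3 specialized to the present situation; the paper's own proof is nothing more than a citation of that lemma (together with the paragraph following it), so you are simply unpacking what the paper invokes. One terminological slip: $\Al(G)$ is locally profinite, not profinite (it contains $G(\A_f)/\ov{Z(\Q)}$), though the quotient $\Al(G)/\Al(G)^0$ is indeed profinite and your fibrewise argument over that base goes through as written.
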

 \begin{proof}
 	See \cite{D} Lemme 2.7.3 (cf. Proposition \ref{P:delgine} below) and the paragraph under it.
 \end{proof}

For later use, we need to consider the following situation. Let $(G, G', X^+)$ be a triplet as before, and let $G''$ be another cover of $G$ which is a quotient of $G'$. Consider the group
\[\Delta=\tr{Ker}\Big(G(\Q)^{+\wedge}\big(\tr{rel}. \tau(G')\big)\lra G(\Q)^{+\wedge}\big(\tr{rel}. \tau(G'')\big)\Big).\]Then it acts on $\Sh^0(G,G',X^+)$. This action comes from group actions on the finite levels as follows. Let $K\subset G'(\A_f)$ be an open compact subgroup. Write the covering maps by
$\phi_1: G'\ra G, \phi_2: G''\ra G, \pi: G'\ra G''$ with $\phi_1=\phi_2\circ\pi$.
Set
\[\Gamma_1=\phi_1\Big(K\cap G'(\Q)_+\Big),\quad \Gamma_2=\phi_2\Big(\pi(K)\cap G''(\Q)_+\Big).\] Then $\Gamma_1\subset \Gamma_2$, since we have
\[\pi\Big(K\cap G'(\Q)_+\Big) \subset \pi(K)\cap G''(\Q)_+.\] 
Set  \[\Delta(K)=\Gamma_2/\Gamma_1,\] which is finite. The natural map
\[\Gamma_1\setminus X^+\ra \Gamma_2\setminus X^+\] is a finite Galois cover with Galois group $\Delta(K)$. As $K\subset G'(\A_f)$ varies, these groups $\Gamma_2$ (resp. $\Gamma_1$) form a fundamental system of neighborhood of the identity for the topology $\tau(G'')$ (resp. $\tau(G')$). Moreover, these finite Galois covers $\Gamma_1\setminus X^+\ra \Gamma_2\setminus X^+ $ form an inverse system with compatible Galois groups. 
\begin{proposition} \label{P:rational-quotient}
	\begin{enumerate}
	\item We have $\Delta\simeq \varprojlim_K \Delta(K)$.
	\item $\Delta$ acts freely on $\Sh^0(G,G',X^+)_\C$ and we have 
\[\Sh^0(G,G'',X^+)_\C=\Sh^0(G,G',X^+)_\C/\Delta.\]
	\end{enumerate}
\end{proposition}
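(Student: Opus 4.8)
The plan is to deduce both assertions from an analysis of the inverse system of finite Galois covers $\Gamma_1\setminus X^+\to\Gamma_2\setminus X^+$ introduced just above the statement, together with the identification of $G(\Q)^{+\wedge}$ as a completion expressed through such systems of arithmetic subgroups. For part (1), I would first note that, as $K\subset G'(\A_f)$ runs over open compact subgroups, the groups $\Gamma_1=\phi_1(K\cap G'(\Q)_+)$ form a fundamental system of neighborhoods of the identity for $\tau(G')$, and the groups $\Gamma_2=\phi_2(\pi(K)\cap G''(\Q)_+)$ form one for $\tau(G'')$; this is exactly the compatibility of the two topologies recorded in the setup. Hence $G(\Q)^{+\wedge}(\mathrm{rel}.\,\tau(G'))=\varprojlim_K G(\Q)^+/\Gamma_1$ (viewing the quotients as the relevant coset spaces, or more precisely passing to the pro-group completion), and likewise for $\tau(G'')$ with the $\Gamma_2$'s. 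The transition map $G(\Q)^{+\wedge}(\mathrm{rel}.\,\tau(G'))\to G(\Q)^{+\wedge}(\mathrm{rel}.\,\tau(G''))$ is then the inverse limit of the surjections $G(\Q)^+/\Gamma_1\to G(\Q)^+/\Gamma_2$, whose kernels are exactly $\Delta(K)=\Gamma_2/\Gamma_1$. Since inverse limits are left exact, $\Delta=\mathrm{Ker}$ of the transition map is $\varprojlim_K\Delta(K)$. The one point requiring care is that one is dealing with completions of a non-abelian group $G(\Q)^+$, so one should phrase the limit via the cofiltered system of coset spaces and check that the kernel (a genuine subgroup, being the preimage of $e$) is computed correctly; this is routine once the topologies are matched.

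For part (2), freeness: $\Delta$ acts on $\Sh^0(G,G',X^+)_\C=\varprojlim_\Gamma\Gamma\setminus X^+$ through the residual action of $G(\Q)^{+\wedge}(\mathrm{rel}.\,\tau(G'))$ described earlier in Section 3.1, and the claim is that this action has no fixed points. At finite level, $\Delta(K)=\Gamma_2/\Gamma_1$ acts on $\Gamma_1\setminus X^+$ as the deck group of the cover $\Gamma_1\setminus X^+\to\Gamma_2\setminus X^+$; such a cover of (the analytification of) a quasi-projective variety by a finite quotient of arithmetic groups is free provided $\Gamma_1$ is neat/torsion-free, which one arranges by taking $K$ sufficiently small — and one is free to do so since the connected Shimura variety is the limit over all such $\Gamma$. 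At the limit, suppose $\delta\in\Delta$ fixes a point $s\in\Sh^0(G,G',X^+)_\C$; its image $\delta_K\in\Delta(K)$ fixes the image $s_K\in\Gamma_1\setminus X^+$ for every $K$, hence $\delta_K$ acts trivially on the $\Gamma_1$-orbit through a lift of $s_K$, which for neat $\Gamma_1$ forces $\delta_K=e$; since $\Delta=\varprojlim\Delta(K)$ by part (1), $\delta=e$. I expect this descent of freeness from finite levels to the limit to be the main (though still mild) obstacle, essentially a matter of organizing the neatness hypothesis and the compatibility of stabilizers under the transition maps.

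Finally, for the quotient identity $\Sh^0(G,G'',X^+)_\C=\Sh^0(G,G',X^+)_\C/\Delta$: at each finite level $\Gamma_2\setminus X^+=(\Gamma_1\setminus X^+)/\Delta(K)$ by the very definition of the cover and its Galois group. Passing to the limit over $K$, the left sides assemble to $\Sh^0(G,G'',X^+)_\C$ (since the $\Gamma_2$'s are cofinal for $\tau(G'')$) and the right sides to $\Sh^0(G,G',X^+)_\C/\Delta$, using that $\Delta=\varprojlim\Delta(K)$ acts compatibly and that forming the quotient of a scheme by a finite group commutes with cofiltered inverse limits along affine transition maps (here the transition maps $\Gamma_1\setminus X^+\to\Gamma_1'\setminus X^+$ are finite, hence affine). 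One checks the resulting map $\Sh^0(G,G',X^+)_\C/\Delta\to\Sh^0(G,G'',X^+)_\C$ is an isomorphism by verifying it is so at every finite level, which is the displayed equality $\Gamma_2\setminus X^+=(\Gamma_1\setminus X^+)/\Delta(K)$. This last step is essentially formal given parts (1)–(2); the substantive input is the neatness argument in the freeness claim.
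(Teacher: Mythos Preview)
Your proposal is correct and follows essentially the same route as the paper. The paper's proof is much terser: for (1) it simply says ``directly by the above construction,'' and for (2) it observes that each $\Gamma_1\setminus X^+\to\Gamma_2\setminus X^+$ is a $\Delta(K)$-torsor, so the inverse limit is a $\varprojlim_K\Delta(K)$-torsor, which by (1) is a $\Delta$-torsor---this single torsor statement packages both the freeness and the quotient identity that you unpack separately. Your more explicit treatment of neatness for the freeness claim is a point the paper glosses over, but the underlying argument is the same.
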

\begin{proof}
	(1) Directly by the above construction.
	
	(2) Since $\Gamma_1\setminus X^+\ra \Gamma_2\setminus X^+$ is a $\Delta(K)$-torsor, taking the limit over $K$, we see that 
	\[\Sh^0(G,G',X^+)_\C=\varprojlim_K \Gamma_1\setminus X^+\ra \Sh^0(G,G'',X^+)_\C=\varprojlim_K \Gamma_2\setminus X^+\] is a 
	$\varprojlim_K\Delta(K)$-torsor. By (1), $\Delta\simeq \varprojlim_K \Delta(K)$. In particular, the assertions in (2) hold. 
\end{proof}
\begin{remark}
The equality $\Sh^0(G,G'',X^+)_\C=\Sh^0(G,G',X^+)_\C/\Delta$ also follows from \cite{D} 2.7.11 (b).
\end{remark}
\begin{remark}\label{R:connected}
	\begin{enumerate}
\item The arithmetic subgroups of the form $\emph{Im}\Big(K\cap G'(\Q)_+\ra G(\Q)^+\Big)$ for $K\subset G'(\A_f)$ open compact defines a cofinal system in the family of all arithmetic subgroups in $G(\Q)^+$ which are open in the topology $\tau(G')$. 
\item If $(G,X)$ is a Shimura datum, for any open compact subgroup $K\subset G(\A_f)$, we have the Shimura variety $\Sh_K(G,X)_\C$. Consider 
the connected component $\Sh^0_K(G,X)_\C$. It is of the form $\Gamma\setminus X^+$, where
\[\Gamma=\emph{Im}\Big(K\cap G(\Q)_+\ra G^{ad}(\Q)^+\Big),\]which is in general larger than
\[\Gamma'=\emph{Im}\Big(K\cap G^{der}(\Q)_+\ra G^{ad}(\Q)^+\Big),\]
cf. \cite{Mi3} Remark 5.23. However, when studying the connected Shimura variety $\Sh^0(G,X)_\C=\varprojlim_K \Sh^0_K(G,X)_\C=\varprojlim_\Gamma \Gamma\setminus X^+$,
we can work with the cofinal family $\Big(\Gamma'\setminus X^+\Big)_{\Gamma'}$. This family will be more convenient in some situation, e.g. to understand the action of $\Delta$ on finite levels.
\end{enumerate}
\end{remark}

\subsubsection{$p$-integral case}\label{subsection}
In \cite{Mo} and \cite{Ki} Moonen and Kisin have adapted the construction in 3.1.1 to the setting of integral canonical models of Shimura varieties. Let us briefly review their construction in the following. Before going on, let us first make two remarks. Firstly, that the ``$p$-integral case'' in the title of this subsection refers only to the level structure, in the sense that we will only work with the Shimura varieties over $\C$ or $\C_p$, in any case only with the generic fibers. Secondly, we will appeal to the construction in this subsection for a different prime $\ell\neq p$ and it will allow us to exhibit a finite group $\Delta$ in Proposition \ref{P:delta}, which we can then quotient by as in section 2. Thus from this point of view, the prime $p$ below will be switched to $\ell$ later.

Fix a prime $p$. Assume that the reductive group $G$ is unramified at $p$ in the rest of this subsection. Let $G_{\Z_{(p)}}$ be a reductive group over $\Z_{(p)}$ with generic fiber $G$. We write $G(\Z_{(p)})$ and $G(\Z_p)$ for $G_{\Z_{(p)}}(\Z_{(p)})$ and $
G_{\Z_{(p)}}(\Z_{p})$ respectively. Consider the following scheme
\[\Sh_{G(\Z_p)}(G,X)_\C=\varprojlim_{K^p}\Sh_{G(\Z_p)K^p}(G,X)_\C,\]where $K^p$ runs through the open compact subgroups of $G(\A_f^p)$. Similarly, consider the connected component $\Sh_{G(\Z_p)}^0(G,X)_\C$ of $\Sh_{G(\Z_p)}(G,X)_\C$ containing the image of $X^+\times\{e\}\subset X\times G(\A_f)$, which is given by the projective limit
\[\Sh_{G(\Z_p)}^0(G,X)_\C=\varprojlim_{K^p}\Sh_{G(\Z_p)K^p}^0(G,X)_\C,\]where for any $K^p$, the scheme $\Sh_{G(\Z_p)K^p}^0(G,X)_\C$ is the connected component of $\Sh_{G(\Z_p)K^p}(G,X)_\C$ containing the image of $X^+\times\{e\}\subset X\times G(\A_f)$. By \cite{Mo} section 3 and \cite{Ki} 3.3, we can adapt Deligne's construction above to this setting. Namely, we have
\[\Sh_{G(\Z_p)}^0(G,X)_\C=\varprojlim_\Gamma\Gamma\setminus X^+,\]
where \[\Gamma=\tr{Im}\Big([G^{der}(\Q)_+\cap G(\Z_p)K^p]\ra G^{ad}(\Q)^+\Big)\] 
for some open compact subgroup $K^p\subset G(\A_f^p)$. Consider the group $G^{ad}(\Z_{(p)})$. Let $\tau(G^{der}_{\Z_{(p)}})$ be the linear topology on $G^{ad}(\Z_{(p)})$ having as a fundamental system of neighborhoods of the identity the images of the $\{p,\infty\}$-congruence subgroups $G^{der}(\Z_{(p)})\cap K^p$, where $K^p$ is an open compact subgroup of $G(\A_f^p)$. Then in the above projective limit \[\Sh_{G(\Z_p)}^0(G,X)_\C=\varprojlim_\Gamma\Gamma\setminus X^+,\]
$\Gamma$ runs through the $\{p,\infty\}$-arithmetic subgroups of $G^{ad}(\Z_{(p)})$ which are open in $\tau(G^{der}_{\Z_{(p)}})$. On $\Sh_{G(\Z_p)}^0(G,X)_\C$ we have a continuous action of $G^{ad}(\Z_{(p)})^{+\wedge}$, the completion of $G^{ad}(\Z_{(p)})^+$ relative to $\tau(G^{der}_{\Z_{(p)}})$. If we need to specify the topology, we will denote this group by
\[G^{ad}(\Z_{(p)})^{+\wedge}\big(\tr{rel}. \tau(G^{der}_{\Z_{(p)}})\big).\]

Let $Z_{\Z_{(p)}}$ be the center of $G_{\Z_{(p)}}$. Write $G(\Z_{(p)})_+=G(\Z_{(p)})\cap G(\Q)_+$ and let $\ov{Z(\Z_{(p)})}$ be the closure of $Z(\Z_{(p)})$ in $Z(\A_f^p)$ and $\ov{G(\Z_{(p)})}_+$ be the closure of $G(\Z_{(p)})_+$ in $G(\A_f^p)$. Set
\[\Al(G_{\Z_{(p)}})=G(\A_f^p)/\ov{Z(\Z_{(p)})}\ast_{G(\Z_{(p)})_+/Z(\Z_{(p)})}G^{ad}(\Z_{(p)})^+\]
and \[\Al(G_{\Z_{(p)}})^0=\ov{G(\Z_{(p)})}_+/\ov{Z(\Z_{(p)})}\ast_{G(\Z_{(p)})_+/Z(\Z_{(p)})}G^{ad}(\Z_{(p)})^+.\]The latter depends only on $G^{der}_{\Z_{(p)}}$, and it is just the completion $G^{ad}(\Z_{(p)})^{+\wedge} \big(\tr{rel}. \tau(G^{der}_{\Z_{(p)}})\big)$. We can recover the scheme $\Sh_{G(\Z_p)}(G,X)_\C$ by $\Sh_{G(\Z_p)}^0(G,X)_\C$ by induction from $\Al(G_{\Z_{(p)}})^0$ to $\Al(G_{\Z_{(p)}})$ (cf. \cite{Ki} Proposition 3.3.10)
\[\Sh_{G(\Z_p)}(G,X)_\C=[\Al(G_{\Z_{(p)}})\times \Sh_{G(\Z_p)}^0(G,X)_\C]/\Al(G_{\Z_{(p)}})^0.\]
By \cite{Ki} Lemma 3.3.3, we have
\[\Al(G_{\Z_{(p)}})/\Al(G_{\Z_{(p)}})^0=\Al(G)^0\setminus \Al(G)/G(\Z_p)= \ov{G(\Q)}_+\setminus G(\A_f)/G(\Z_p).\]
As before, there is an $\Al(G_{\Z_{(p)}})$-equivariant map \[\Sh_{G(\Z_p)}(G,X)_\C\ra \Al(G_{\Z_{(p)}})/\Al(G_{\Z_{(p)}})^0,\] and the scheme $\Sh_{G(\Z_p)}^0(G,X)_\C$ is isomorphic to the fiber at $e$ of this map.

Clearly, for any triplet $(\mathcal{G}, \mathcal{G}', X^+)$ consisting of an adjoint group $\mathcal{G}$ over $\Z_{(p)}$, a cover $\mathcal{G}'$ of $\mathcal{G}$, and a $G(\R)^+$-conjugacy class ($G:=\mathcal{G}_\Q$) of morphisms $h: \mathbb{S}\ra G_\R$ which satisfy the axioms as in the definition of a Shimura datum, we can define the scheme $\Sh^0(\mathcal{G},\mathcal{G}',X^+)_\C$  in the same way. Let $(\mathcal{G}, \mathcal{G}', X^+)$ be such a triplet, and let $\mathcal{G}'\ra \mathcal{G}'' $ be a central isogeny for another cover $\mathcal{G}''$ of $\mathcal{G}$. Consider the group
\[\Delta=\tr{Ker}\Big(\mathcal{G}(\Z_{(p)})^{+\wedge}\big(\tr{rel}. \tau(\mathcal{G}')\big)\lra \mathcal{G}(\Z_{(p)})^{+\wedge}\big(\tr{rel}. \tau(\mathcal{G}'')\big)\Big),\]which is a \emph{finite} group by \cite{Ki} 3.3.9 or \cite{Mo} 3.21.1. This group
$\Delta$ acts freely on $\Sh^0(\mathcal{G},\mathcal{G}',X^+)_\C$ and we have (cf. \cite{Mo} 3.21.1)
\[\Sh^0(\mathcal{G},\mathcal{G}'',X^+)_\C=\Sh^0(\mathcal{G},\mathcal{G}',X^+)_\C/\Delta.\]This quotient can be understood on finite levels as in the paragraph above Proposition \ref{P:rational-quotient}.

\subsection{The algebraic construction}\label{S}
Let us first review the definition of a Shimura datum of abelian type.
\begin{definition}\label{D:abel}
	\begin{enumerate}
		\item A Shimura datum $(G, X)$ is called of Hodge type, if there exists an embedding into a Siegel Shimura datum $(G,X)\hookrightarrow (\GSp_{2g}, S^{\pm})$.
		\item A Shimura datum $(G,X)$ is called of abelian type, if there is a Shimura datum $(G_1,X_1)$ of Hodge type,  together with a central isogeny $G_1^{der}\ra G^{der}$ which induces an isomorphism between the associated adjoint Shimura data $(G_1^{ad}, X_1^{ad})\st{\sim}{\ra}(G^{ad}, X^{ad})$.
		\end{enumerate}
\end{definition}
If $(G,X)$ is a Shimura datum of abelian type, the associated Shimura varieties $\Sh_K(G,X)$ are called Shimura varieties of abelian type. Note that in (2) of the above definition,  we have an isomorphism of Hermitian symmetric domains  $X^+\simeq X_1^+$. The simple factors in $G^{ad}$ for Shimura data of abelian type are classified in \cite{D} 2.3.8. It includes simple groups of types A, B, C, and most parts of type D, cf. loc. cit. for more details. Among all Shimura varieties, those of abelian type form an important and in fact the main class. These varieties were studied in many places, for example see \cite{D,Mi1, Ki}. 
\begin{remark}\label{R:abelian}
	If $(G,X)$ is a Shimura datum of abelian type, by Lemma 3.4.13 of \cite{Ki}, we can find a choice of $(G_1^\sharp, X_1^\sharp)$ satisfying the above definition, which has the maximal derived subgroup, among all of such Hodge type datum $(G_1,X_1)$, in the sense that the central isogeny
	$G_1^{\sharp der}\ra G^{der}$ factors through $G_1^{der}\ra G^{der}$, and makes $G_1^{\sharp der}$ into a cover of $G_1^{der}$.
\end{remark}

The class of abelian type Shimura varieties is strictly larger than the class of Hodge type Shimura varieties. Here are some immediate examples of abelian type Shimura varieties which are not of Hodge type. We will give some further examples in the next section.
\begin{example}\label{E:quaternion}
	Let $F$ be a totally real field and assume that $F\neq \Q$. Let $D$ be a quaternion algebra over $F$. For each open compact subgroup $K\subset D^\times(\A_{F,f})$, we have the associated quaternionic Shimura variety $\Sh_K$ of level $K$, which is of abelian type by Degline's classification. These varieties $\Sh_K$ are of PEL type if and only if one of the following two cases holds:
	\begin{itemize}
		\item $D\simeq M_2(F)$ is split, i.e. the Hilbert modular case; 
		\item $D$ is a totally indefinite quaternion division algebra, i.e. for all $v|\infty$, $D_v\simeq M_2(\R)$, and the associated Shimura varieties are compact.
	\end{itemize}
	We claim that if the quaternionic Shimura varieties $\Sh_K$ are not of PEL type (e.g. the case of Shimura curves), then they are neither of Hodge type. Indeed, for these $\Sh_K$ excluded from the above two cases, the weight morphisms $w$ are not defined over $\Q$: for any Shimura variety $\Sh_K$ associated to $D$, the weight morphism $w$ is defined over the subfield of $\ov{\Q}$ fixed by the automorphisms of $\ov{\Q}$ stabilizing the set of archimedean places $I_{nc}=\{v|\infty\,|\, D_v\simeq M_2(\R)\}$. This field of definition equals $\Q$ if and only $I_{nc}=\{v|\infty\}=\mathrm{Hom}(F,\R)$, see \cite{Mi3} Example 5.24.
\end{example}

Fix a prime $p$ from now on. In this subsection, we continue to study geometric connected components of Shimura varieties, but with the level $K^p$ outside $p$ fixed.
Let $(G,X)$ be a Shimura datum. Consider the following schemes
\[\Sh_{K^p}(G, X)_{\C}=\varprojlim_{K_p}\Sh_{K_pK^p}(G, X)_{\C}\]
and its connected component\[\Sh_{K^p}^0(G,X)_{\C}=\varprojlim_{K_p}\Sh_{K_pK^p}^0(G,X)_{\C},\]where in both limit $K_p$ runs through open compact subgroups of $G(\Q_p)$.
The scheme $\Sh_{K^p}^0(G, X)_\C$ can be described in a similar way to those in the last subsection. More precisely, we have
\[ \Sh_{K^p}^0(G,X)_\C=\varprojlim_\Gamma\Gamma\setminus X^+,\]where $\Gamma$ runs through the arithmetic subgroups of $G^{ad}(\Q)^+$ in the form of \[\tr{Im}\Big([G^{der}(\Q)_+\cap K_pK^p]\ra G^{ad}(\Q)^+\Big),\] for some open compact subgroup $K_p\subset G(\Q_p)$, cf. Remark \ref{R:connected} (2). Consider the group
\[\Gamma_0=\tr{Im}\Big([G^{der}(\Q)_+\cap K^p]\ra G^{ad}(\Q)^+\Big).\]This is a $(p,\infty)$-arithmetic subgroup of $G^{ad}(\Q)^+$. It acts on the system of varieties \[\Big(\Sh_{K_pK^p}^0(G,X)_{\C}\Big)_{K_p}=\Big(\Gamma\setminus X^+\Big)_\Gamma,\]where $\Gamma=\tr{Im}\Big([G^{der}(\Q)_+\cap K_pK^p]\ra G^{ad}(\Q)^+\Big)$ for some open compact subgroup $K_p\subset G(\Q_p)$.
There is a linear topology $\tau(G^{der})$ on $\Gamma_0$, for which the above subgroups 
$\Gamma$ of $\Gamma_0$ form a fundamental system of neighborhood of the identity. Let $\Gamma_0^\wedge$ be the completion of $\Gamma_0$ with respective to this topology. As before, when we need to specify the topology, we will denote it by
\[\Gamma_0^\wedge \big(\tr{rel}. \tau(G^{der})\big).\]
 Then $\Gamma_0^\wedge (\tr{rel}. \tau(G^{der}))$ acts continuously on the scheme $\Sh_{K^p}^0(G,X)_\C$. On the other hand, the group $G(\Q_p)$ acts on the scheme $\Sh_{K^p}(G, X)_\C$ by $p$-adic Hecke correspondences. Let $G_0$ be the subgroup of $G(\Q_p)$ which stabilizes the subscheme $\Sh_{K^p}^0(G, X)_\C\subset \Sh_{K^p}(G, X)_\C$. Set
\[\Al(K^p)=G(\Q_p)\ast_{K^p\cap G^{der}(\Q)_+}\Gamma_0\]
and \[\Al(K^p)^0= G_0\ast_{G_0\cap K^p\cap G^{der}(\Q)_+}\Gamma_0.\] We have the following lemma.
\begin{lemma}
We have $\Al(K^p)^0=\Gamma_0^\wedge \big(\tr{rel}. \tau(G^{der})\big)$. In particular, $\Al(K^p)^0$ depends only on $G^{der}$ and not on $G$.
\end{lemma}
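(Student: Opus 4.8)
The plan is to mirror the computation of $\mathcal{A}(G)^0$ in \cite{D} 2.1.6--2.1.15 and of its $p$-integral analogue in \cite{Ki} Lemma 3.3.3 and \cite{Mo} \S3, now with the prime-to-$p$ level $K^p$ fixed. Recall first that, by construction, $\Al(K^p)^0 = G_0 \ast_{\Gamma}\Gamma_0$ with $\Gamma := G_0\cap K^p\cap G^{der}(\Q)_+$, formed as in \S3.1.1: here $\Gamma_0\subset G^{ad}(\Q)^+$ acts on $G_0\subset G(\Q_p)$ through the conjugation action of $G^{ad}(\Q)^+$ on $G(\Q_p)$ (which preserves $G_0$, since this $G^{ad}(\Q)^+$-action on the tower $\bigl(\Sh_{K_pK^p}(G,X)_\C\bigr)_{K_p}$ preserves the subtower of connected components), and the structure map $\varphi\colon\Gamma\to\Gamma_0$ is the restriction of $G^{der}(\Q)_+\to G^{ad}(\Q)^+$. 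Recall also that, as in \S3.1.1, one has $\Sh_{K^p}^0(G,X)_\C=\varprojlim_{\Gamma'}\Gamma'\backslash X^+$, where $\Gamma'$ runs over the arithmetic subgroups of $G^{ad}(\Q)^+$ of the form $\tr{Im}\bigl([G^{der}(\Q)_+\cap K_pK^p]\to G^{ad}(\Q)^+\bigr)$; these $\Gamma'$ are precisely the $\Gamma'\subset\Gamma_0$ open in $\tau(G^{der})$, so the pro-variety $\Sh_{K^p}^0(G,X)_\C$ carries a continuous faithful action of the completion $\Gamma_0^\wedge\bigl(\tr{rel}.\,\tau(G^{der})\bigr)$ (faithful because $\bigcap_{\Gamma'}\Gamma'=1$).

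Next I would construct a comparison homomorphism $\Phi\colon\Al(K^p)^0\to\Gamma_0^\wedge$, using the universal property of the amalgam. On the factor $\Gamma_0$ one takes the canonical dense inclusion $\Gamma_0\hookrightarrow\Gamma_0^\wedge$. On the factor $G_0$ one uses that $G_0$ acts on $\Sh_{K^p}^0(G,X)_\C$ by restriction of the $p$-adic Hecke action of $G(\Q_p)$ on $\Sh_{K^p}(G,X)_\C$; since $\Al(K^p)$ acts on $\Sh_{K^p}(G,X)_\C$ extending both the Hecke action and the $\Gamma_0$-action, and $\Al(K^p)^0$ is by definition the stabiliser of the component $\Sh_{K^p}^0(G,X)_\C$, the resulting action of $G_0$ on $\Sh_{K^p}^0(G,X)_\C$ is compatible with the $\Gamma_0$-action, and on $\Gamma$ it agrees, via $\varphi$, with the $\Gamma_0$-action — this is exactly the statement, built into the construction recalled in \S3.1.1, that the Hecke operator attached to $\gamma\in G^{der}(\Q)_+$ acts on the connected Shimura variety through the image of $\gamma$ in $G^{ad}(\Q)^+$. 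Hence the $G_0$-action factors through $\Gamma_0^\wedge$ as well, the two maps $G_0\to\Gamma_0^\wedge$ and $\Gamma_0\to\Gamma_0^\wedge$ satisfy the amalgamation relation (agreement on $\Gamma$ via $\varphi$, and intertwining of the conjugation actions), and $\Phi$ is defined.

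It remains to see that $\Phi$ is an isomorphism. Surjectivity is the easy direction: $\Gamma_0$ already lies in the image and $\Gamma_0^\wedge=\varprojlim_{\Gamma'}\Gamma_0/\Gamma'$, so one checks that the natural maps $\Al(K^p)^0\to\Gamma_0/\Gamma'$ are surjective with kernels forming a cofinal system, which follows by tracking how the $G_0$-part is absorbed by the amalgamation relation. The main point, and the step I expect to be the real obstacle, is injectivity of $\Phi$: one must rule out extra elements of $G_0\ast_\Gamma\Gamma_0$ acting trivially on $\Sh_{K^p}^0(G,X)_\C$, i.e. one must show that the kernel of the $G_0$-action on $\Sh_{K^p}^0(G,X)_\C$ is exactly the image of $\Gamma$ under $\varphi$ inside the amalgamated copy of $\Gamma_0$. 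This is where strong approximation for $G^{sc}$ enters — together with the Shimura-datum axiom excluding $\Q$-simple factors of $G^{ad}$ anisotropic over $\R$, which provides the non-compactness needed for strong approximation away from $p$: it shows that $\Gamma$ is dense in $G_0$ for the relevant topology and, consequently, that the image of $G_0$ in the automorphism group of $\Sh_{K^p}^0(G,X)_\C$ already lies in (and is recovered from) $\Gamma_0^\wedge$, with the expected kernel. This is the precise analogue of the identities \cite{D} 2.1.6.2 and the computations of $\mathcal{A}(G)^0$ in \cite{Mi3}. Finally, the ``in particular'' clause is immediate once the equality $\Al(K^p)^0=\Gamma_0^\wedge$ is known: with $K^p$ fixed, both $\Gamma_0=\tr{Im}\bigl([G^{der}(\Q)_+\cap K^p]\to G^{ad}(\Q)^+\bigr)$ and the topology $\tau(G^{der})$ (whose neighbourhood basis of the identity consists of the subgroups $\tr{Im}\bigl([G^{der}(\Q)_+\cap K_pK^p]\to G^{ad}(\Q)^+\bigr)$) depend only on $G^{der}$, hence so does the completion $\Gamma_0^\wedge$.
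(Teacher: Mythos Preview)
Your proposal is correct and follows exactly the route the paper takes: the paper's entire proof is the single citation ``See \cite{D} 2.1.15.1'', and what you have written is a faithful unpacking of that reference (together with its $p$-integral analogues in \cite{Ki} and \cite{Mo}) adapted to the situation with $K^p$ fixed. One small presentational point: the sentence ``Hence the $G_0$-action factors through $\Gamma_0^\wedge$ as well'' is not yet justified at the place you put it---it is precisely the strong-approximation step you correctly identify later, so that ingredient should be invoked \emph{before} declaring $\Phi$ to exist, not only in the injectivity discussion.
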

\begin{proof}
 See \cite{D} 2.1.15.1.
\end{proof}
There is an $\Al(K^p)$-equivariant map \[\Sh_{K^p}(G, X)_{\C}\ra \Al(K^p)/\Al(K^p)^0=G(\Q_p)/G_0,\] and $\Sh_{K^p}^0(G, X)_{\C}$ is isomorphic to the fiber at $e$ of this map. Conversely, we can construct $\Sh_{K^p}(G, X)_{\C}$ from $\Sh_{K^p}^0(G, X)_{\C}$ by an induction.
\begin{lemma}
There is an isomorphism of schemes with continuous $G(\Q_p)$-action
\[\Sh_{K^p}(G, X)_{\C}\st{\sim}{\lra}[\Al(K^p)\times \Sh_{K^p}^0(G, X)_{\C}]/\Al(K^p)^0.\]
\end{lemma}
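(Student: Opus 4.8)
The statement is the exact analogue, in the ``fixed level $K^p$ away from $p$'' setting, of Proposition \ref{P:induction} (Deligne's Lemme 2.7.3) and of Kisin's Proposition 3.3.10. So the plan is to mimic the proof of Proposition \ref{P:induction}, translating each ingredient into the present situation. First I would recall the general formalism of $G \ast_\Gamma H$ from \cite{D} 2.0.1 and unwind what $[\Al(K^p) \times \Sh_{K^p}^0(G,X)_\C]/\Al(K^p)^0$ means concretely: $\Al(K^p)^0 = \Gamma_0^\wedge(\mathrm{rel}.\,\tau(G^{der}))$ acts on the right on $\Sh_{K^p}^0(G,X)_\C$ (by the continuous action described in the text) and on the left on $\Al(K^p)$ by the $\ast$-construction, i.e. via $(\gamma', s)\gamma = (\gamma^{-1}\gamma', s\gamma)$, and the quotient is taken for this diagonal action. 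The map to $\Al(K^p)/\Al(K^p)^0 = G(\Q_p)/G_0$ then exhibits $\Sh_{K^p}^0(G,X)_\C$ as the fiber over $e$, which is precisely the content already asserted in the paragraph preceding the statement.

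**Key steps.** (1) Define the candidate morphism. Using the identification $\Sh_{K^p}^0(G,X)_\C \hookrightarrow \Sh_{K^p}(G,X)_\C$ as the fiber over $e$ and the $G(\Q_p)$-action on $\Sh_{K^p}(G,X)_\C$ by $p$-adic Hecke correspondences, send $(\gamma', s) \in \Al(K^p) \times \Sh_{K^p}^0(G,X)_\C$ to $\gamma' \cdot s \in \Sh_{K^p}(G,X)_\C$, where $\gamma'$ acts through the natural map $\Al(K^p) \to \mathrm{Aut}(\Sh_{K^p}(G,X)_\C)$ extending both the $G(\Q_p)$-action and the $\Gamma_0$-action (these are compatible on the common overlap $K^p \cap G^{der}(\Q)_+$ by the very definition of the amalgam $\Al(K^p) = G(\Q_p) \ast_{K^p \cap G^{der}(\Q)_+} \Gamma_0$). (2) Check this descends to the quotient by $\Al(K^p)^0$: if $\gamma \in \Al(K^p)^0 = \Gamma_0^\wedge$, then $(\gamma^{-1}\gamma')\cdot(s\gamma) = \gamma^{-1}\gamma' \cdot s \gamma = \gamma'\cdot s$ because $\gamma$ acts on $s$ on the right and on $\gamma'$ on the left by the same inner-type compatibility built into the $\ast$-construction; this is the formal computation underlying \cite{D} 2.0.1. (3) Injectivity: two pairs $(\gamma_1', s_1)$, $(\gamma_2', s_2)$ have the same image iff $\gamma_2'^{-1}\gamma_1'$ lands in the stabilizer $G_0$ of $\Sh_{K^p}^0$ in $G(\Q_p)$ and carries $s_1$ to $s_2$ — but elements of $G_0$ that stabilize the component and act on it are, after the amalgamation, exactly $\Al(K^p)^0$, so the pairs are identified in the quotient. (4) Surjectivity: $\Al(K^p)$ acts transitively on the set of connected components appearing in $\Sh_{K^p}(G,X)_\C$ (this uses that the $G(\Q_p)$-Hecke action permutes the components, together with the component-recovery statement $\Sh_{K^p}(G,X)_\C \to G(\Q_p)/G_0$ already in the text), so every point of $\Sh_{K^p}(G,X)_\C$ is $\gamma'\cdot s$ for some $\gamma' \in \Al(K^p)$ and $s \in \Sh_{K^p}^0(G,X)_\C$. (5) $G(\Q_p)$-equivariance: the left $\Al(K^p)$-action on the first factor restricts to $G(\Q_p) \subset \Al(K^p)$ and visibly matches the Hecke action on $\Sh_{K^p}(G,X)_\C$; continuity is inherited from the continuity of the actions on $\Sh_{K^p}^0(G,X)_\C$ and of the $p$-adic Hecke action.

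**Main obstacle.** The genuinely delicate point is Step (3)/(4): pinning down that the stabilizer in $\Al(K^p)$ of the connected component $\Sh_{K^p}^0(G,X)_\C$ is exactly $\Al(K^p)^0$, and hence that the orbit map induces a bijection on components. This is the analogue of \cite{D} 2.1.16 (the transitivity and stabilizer statements for $\Al(G)$ acting on $\pi_0$), now in the ``$K^p$ fixed'' setting; it requires knowing that $G_0$ (the stabilizer in $G(\Q_p)$) together with $\Gamma_0$ generates, under the amalgam, precisely $\Gamma_0^\wedge(\mathrm{rel}.\,\tau(G^{der}))$, which is the content of the Lemma immediately preceding the statement. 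Everything else is the formal book-keeping of the $\ast_\Gamma$-construction, carried out just as in \cite{D} Lemme 2.7.3 and the discussion in \cite{Ki} 3.3. I would therefore present the proof as: ``The argument is identical to that of Proposition \ref{P:induction}, replacing $\Al(G)$, $\Al(G)^0$ by $\Al(K^p)$, $\Al(K^p)^0$ and using the Lemma above in place of \cite{D} 2.1.15.1'' — and then spell out Steps (1)–(2) and the stabilizer computation of (3)–(4) in a few lines.
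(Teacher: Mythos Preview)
Your proposal is correct and takes essentially the same approach as the paper: the paper's proof is simply the one-line remark that this is a special case of \cite{D} Lemme 2.7.3 (stated as Proposition \ref{P:delgine}), with a pointer to Proposition \ref{P:induction}. You have unwound exactly what that citation entails, and your final suggested write-up (``identical to Proposition \ref{P:induction}, replacing $\Al(G),\Al(G)^0$ by $\Al(K^p),\Al(K^p)^0$'') matches the paper's intent precisely.
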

\begin{proof}
	This is a special case of \cite{D} Lemme 2.7.3 (cf. Proposition \ref{P:delgine} below), see also Proposition \ref{P:induction}.
\end{proof}

Assume that we have two Shimura data $(G_1, X_1)$ and $(G_2, X_2)$, such that there is a central isogeny $G_1^{der}\ra G^{der}_2$ which induces an isomorphism $(G_1^{ad}, X_1^{ad})\st{\sim}{\ra}(G_2^{ad}, X_2^{ad})$. We will not assume that $(G_2, X_2)$ is of abelian type, unless clearly stated (in the next subsection, after Remark \ref{R}). We fix an open compact subgroup $K_2^p\subset G_2(\A_f^p)$, then we can and we do fix an open compact subgroup $K_1^p\subset G_1(\A_f^p)$ such that $K^p_1\cap G_1^{der}(\A_f^p)$ maps to $K_2^p\cap G_2^{der}(\A_f^p)$ under the above isogeny. 
Set \[\Delta=\varprojlim_{K_{1p}}\frac{\tr{Im}\Big([G^{der}_2(\Q)_+\cap \pi(K_{1p}^{der})K_2^p]\ra G^{ad}_2(\Q)^+\Big)}{\tr{Im}\Big([G^{der}_1(\Q)_+\cap K_{1p}K_1^p]\ra G^{ad}_1(\Q)^+\Big)},\]
where $\pi: G_1^{der}\ra G^{der}_2$ is the central isogeny, $K_{1p}$ runs through open compact subgroups of $G_1(\Q_p)$, and $K_{1p}^{der}=K_{1p}\cap G_1^{der}(\Q_p)$. By definition, the group $\Delta$ depends on the choices of $K_1^p$ and $K_2^p$. It acts on $\Sh_{K^p_1}^0(G_1, X_1)_{\C}$, and this action comes from the Hecke action, see the last paragraph of this subsection.
We have the following proposition.
\begin{proposition}\label{P:delta}
$\Delta$ is a finite group, and it acts freely on $\Sh_{K^p_1}^0(G_1, X_1)_{\C}$. Moreover, there is an isomorphism of schemes with continuous $\Al(K^p_2)^0$-action
\[\Sh_{K^p_2}^0(G_2, X_2)_{\C}\st{\sim}{\lra}\Sh_{K^p_1}^0(G_1, X_1)_{\C}/\Delta.\]
\end{proposition}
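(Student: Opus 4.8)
The plan is to run, at the level of geometrically connected components with fixed prime-to-$p$ level, the same construction used in subsection~3.1.2 and in Proposition~\ref{P:rational-quotient}. Write $X^+\simeq X_1^+\simeq X_2^+$ and $G^{ad}=G_1^{ad}=G_2^{ad}$ via the given identification of adjoint Shimura data. By the description recalled in subsection~3.2, for $i=1,2$ one has $\Sh_{K_i^p}^0(G_i,X_i)_\C=\varprojlim_\Gamma\Gamma\backslash X^+$, where $\Gamma$ runs through the arithmetic subgroups $\tr{Im}\big([G_i^{der}(\Q)_+\cap K_{ip}K_i^p]\ra G^{ad}(\Q)^+\big)$, $K_{ip}\subset G_i(\Q_p)$ open compact. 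Since $\pi\colon G_1^{der}\ra G_2^{der}$ is a central isogeny, the map $G_1^{der}(\Q_p)\ra G_2^{der}(\Q_p)$ is open with image of finite index, so the subgroups $\pi(K_{1p}^{der})$ ($K_{1p}^{der}:=K_{1p}\cap G_1^{der}(\Q_p)$) form a cofinal family among the open compact subgroups of $G_2^{der}(\Q_p)$; hence $\Sh_{K_2^p}^0(G_2,X_2)_\C=\varprojlim_{K_{1p}}\Gamma_2'\backslash X^+$ with $\Gamma_2'=\tr{Im}\big([G_2^{der}(\Q)_+\cap\pi(K_{1p}^{der})K_2^p]\ra G^{ad}(\Q)^+\big)$. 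Using that $\pi$ carries $G_1^{der}(\Q)_+$ into $G_2^{der}(\Q)_+$, induces the fixed identification on adjoint groups, and that $K_1^p$ was chosen compatibly with $K_2^p$ away from $p$, one checks $\pi$ sends $G_1^{der}(\Q)_+\cap K_{1p}K_1^p$ into $G_2^{der}(\Q)_+\cap\pi(K_{1p}^{der})K_2^p$, so $\Gamma_1:=\tr{Im}\big([G_1^{der}(\Q)_+\cap K_{1p}K_1^p]\ra G^{ad}(\Q)^+\big)\subset\Gamma_2'$, a finite-index inclusion of commensurable arithmetic groups (so each $\Delta(K_{1p}):=\Gamma_2'/\Gamma_1$ is finite).

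Next, exactly as in the proof of Proposition~\ref{P:rational-quotient} (resp.\ the construction in subsection~3.1.2), and using that $K_1^p$, $K_2^p$ are sufficiently small so that the occurring $\Gamma_1,\Gamma_2'$ act freely on $X^+$, along a cofinal system of $K_{1p}$ the map $\Gamma_1\backslash X^+\ra\Gamma_2'\backslash X^+$ is a finite \'etale $\Delta(K_{1p})$-torsor, and these torsors are compatible with compatible structure groups as $K_{1p}$ varies. Passing to the limit — here one uses the identification $\Sh_{K_2^p}^0(G_2,X_2)_\C=\varprojlim_{K_{1p}}\Gamma_2'\backslash X^+$ from the previous paragraph — shows that $\Sh_{K_1^p}^0(G_1,X_1)_\C\ra\Sh_{K_2^p}^0(G_2,X_2)_\C$ is a $\Delta$-torsor with $\Delta=\varprojlim_{K_{1p}}\Delta(K_{1p})$, in particular $\Delta$ acts freely, and $\Sh_{K_2^p}^0(G_2,X_2)_\C\simeq\Sh_{K_1^p}^0(G_1,X_1)_\C/\Delta$. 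For the $\Al(K_2^p)^0$-equivariance, note that by the Lemma of subsection~3.2 one has $\Al(K_i^p)^0=\Gamma_{0,i}^\wedge\big(\tr{rel}.\,\tau(G_i^{der})\big)$ with $\Gamma_{0,i}=\tr{Im}\big([G_i^{der}(\Q)_+\cap K_i^p]\ra G^{ad}(\Q)^+\big)$; the inclusion $\Gamma_{0,1}\subset\Gamma_{0,2}$ and the comparison of topologies give a continuous surjection $\Al(K_1^p)^0\twoheadrightarrow\Al(K_2^p)^0$ with kernel $\Delta$, and since at each finite level the cover is equivariant for the $G^{ad}(\Q)^+$-action on $X^+$, the $\Delta$-torsor above is $\Al(K_1^p)^0$-equivariant (acting on the target through $\Al(K_1^p)^0\twoheadrightarrow\Al(K_2^p)^0$), so the induced isomorphism $\Sh_{K_2^p}^0(G_2,X_2)_\C\st{\sim}{\lra}\Sh_{K_1^p}^0(G_1,X_1)_\C/\Delta$ is $\Al(K_2^p)^0$-equivariant.

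The one genuinely nontrivial point — and the main obstacle — is that $\Delta$ is \emph{finite}; everything else above is formal. The finiteness is controlled by the finite central subgroup $\mu=\tr{Ker}(\pi)$: as in Proposition~\ref{P:rational-quotient} the quotients $\Gamma_2'/\Gamma_1$ are governed by lifting classes along $1\ra\mu\ra G_1^{der}\ra G_2^{der}\ra1$ subject to the local conditions imposed by the level, and a Selmer-type finiteness for $\mu$ forces the system $(\Delta(K_{1p}))_{K_{1p}}$ to stabilize. Concretely I would deduce this from the finiteness already recorded in subsection~3.1.2: choose an auxiliary prime $\ell\neq p$ at which $G_1,G_2$ (hence $G^{ad}$) are unramified and at which $K_1^p=K_{1,\ell}K_1^{p,\ell}$, $K_2^p=K_{2,\ell}K_2^{p,\ell}$ with $K_{i,\ell}$ hyperspecial and $\pi$-compatible; spreading $G_1^{der},G_2^{der}$ out to reductive $\Z_{(\ell)}$-group schemes $\mathcal{G}_1',\mathcal{G}_2'$ covering a $\Z_{(\ell)}$-model $\mathcal{G}$ of $G^{ad}$, the hyperspecial structure at $\ell$ together with the fixed prime-to-$p$ level identifies the $\Delta$ above with the group $\tr{Ker}\big(\mathcal{G}(\Z_{(\ell)})^{+\wedge}(\tr{rel}.\,\tau(\mathcal{G}_1'))\ra\mathcal{G}(\Z_{(\ell)})^{+\wedge}(\tr{rel}.\,\tau(\mathcal{G}_2'))\big)$, which is finite by \cite{Ki}~3.3.9 (or \cite{Mo}~3.21.1). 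Making this last identification precise — i.e.\ verifying that, after fixing $K^p$ and using hyperspecial level at $\ell$, our situation is literally an instance of the one in those references (with $p$ replaced by $\ell$) — is the technical heart of the argument, and is exactly the place where the freedom to choose the auxiliary prime $\ell$ is used. Finally, $\Delta$ acting freely is immediate from the torsor property established above.
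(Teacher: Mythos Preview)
Your proposal is correct and follows essentially the same strategy as the paper: the torsor and quotient statements are deduced exactly as in Proposition~\ref{P:rational-quotient}, and the finiteness of $\Delta$ is obtained by choosing an auxiliary prime $\ell\neq p$ at which the level is hyperspecial and comparing with the finite group $\widetilde{\Delta}$ of subsection~3.1.2 (with $p$ replaced by $\ell$), invoking \cite{Ki}~3.3.9 or \cite{Mo}~3.21.1. The only minor difference is that the paper phrases the comparison as a \emph{surjection} $\widetilde{\Delta}\twoheadrightarrow\Delta$ rather than an identification: since $\widetilde{\Delta}$ allows variation of the level at all primes $\neq\ell$ while $\Delta$ only varies the level at $p$ (with $K_i^p$ fixed), the natural map goes from $\widetilde{\Delta}$ onto $\Delta$, which already suffices for finiteness and spares you the work of checking the kernel is trivial.
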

\begin{proof}
We need	only to prove that $\Delta$ is a finite group. Take a prime $\ell\neq p$ such that $K^p_1$ (and $K^p$) is  hyperspecial at $\ell$. Let $\wt{\Delta}$ be the finite group as in \ref{subsection} for $p=\ell$. We can write $\wt{\Delta}$ as an inverse limit, similar to the descriptions in 3.1.1 and the definition of $\Delta$ above. Then we see that there is a surjection $\wt{\Delta}\ra \Delta$.
The remaining statements can be deduced in the same way as Proposition \ref{P:rational-quotient}. 
\end{proof}
\begin{remark}
	Given $K_2^p$, if we can choose some $K_1^p$ such that \[\emph{Im}\Big([G^{der}_2(\Q)_+\cap K_2^p]\ra G^{ad}_2(\Q)^+\Big)=\emph{Im}\Big([G^{der}_1(\Q)_+\cap K_1^p]\ra G^{ad}_1(\Q)^+\Big)=:\Gamma_0,\] then as in 3.1.1 and 3.1.2, \[\Delta=\emph{Ker}\Big(\Al(K^p_1)^0\ra \Al(K^p_2)^0\Big)=\emph{Ker}\Big(\Gamma_0^\wedge\big(\tr{rel}. \tau(G_1^{der})\big)\lra \Gamma_0^\wedge\big(\tr{rel}. \tau(G_2^{der})\big)\Big).\]
\end{remark}

The above constructions are true over any algebraically closed field which contains the reflex fields of the involved Shimura datum. Here we work over algebraically closed field to make use of geometric connected components of our Shimura varieties. One can of course work over large enough (so that the geometric connected components of Shimura varieties are defined) perfectoid field as the base field. Fix an algebraic closure $\ov{\Q}_p$ of $\Q_p$, and let $\C_p$ be its $p$-adic completion. From now on, we work over the $p$-adic field $\C_p$. Fix an isomorphism of fields $\C\simeq \ov{\Q}_p$. We can base change schemes over $\C$ to $\C_p$ via the composition of this morphism and $\ov{\Q}_p\hookrightarrow\C_p$. In particular, we consider the associated Shimura varieties over $\C_p$. Then with the above notations, we have \[\Sh_{K^p_2}^0(G_2, X_2)_{\C_p}\st{\sim}{\lra}\Sh_{K^p_1}^0(G_1, X_1)_{\C_p}/\Delta\] and
\[\Sh_{K^p_2}(G_2, X_2)_{\C_p}\st{\sim}{\lra}[\Al(K^p_2)\times \Sh_{K^p_2}^0(G_2, X_2)_{\C_p}]/\Al(K^p_2)^0.\]

We would like to briefly recall that how the above quotient is at a finite level, see the paragraph above Proposition \ref{P:rational-quotient} and Remark \ref{R:connected}. For simplicity we work over $\C$. For an open compact subgroup $K_{1p}\subset G_1(\Q_p)$, we have a map of connected Shimura varieties
\[ \Gamma_1\setminus X^+\ra \Gamma_2\setminus X^+,\]which is a finite \'etale Galois cover with Galois group denoted by
\[\Delta(K_{1p})=\Gamma_2/\Gamma_1, \]where $X^+=X^+_1=X^+_2$, and
\[\Gamma_1=\phi_1\Big(K_{1p}K^p_1\cap G_1^{der}(\Q)_+\Big),\quad \Gamma_2=\phi_2\Big(\pi\big(K_{1p}^{der})K^p_2\cap G_2^{der}(\Q)_+\Big).\]
Here as in the last subsection, $\phi_1: G_1^{der}\ra G^{ad}_1, \phi_2: G_2^{der}\ra G^{ad}_2, \pi: G_1^{der}\ra G_2^{der}$ are the covering maps, with $\phi_1=\phi_2\circ\pi$.
For $K_{1p}'\subset K_{1p}$, we have a commutative diagram of morphisms between connected Shimura varieties
\[\xymatrix{\Gamma_1'\setminus X^+\ar[d]\ar[r]& \Gamma_1\setminus X^+\ar[d]\\
	\Gamma_2'\setminus X^+\ar[r]& \Gamma_2\setminus X^+,
	} \]
	which is compatible with the map of groups $\Delta(K_{1p}')\ra \Delta(K_{1p})$. Taking inverse limit over all the open compact subgroups $K_{1p}\subset G_1(\Q_p)$, we get back the finite \'etale Galois cover
	\[ \Sh_{K^p_1}^0(G_1, X_1)\ra \Sh_{K^p_2}^0(G_2, X_2)\]
	with Galois group \[\Delta=\varprojlim_{K_{1p}}\Delta(K_{1p}).\]
	By Proposition \ref{P:delta}, $\Delta$ is finite. Hence, there exists some $K_{1p}'$ such that for all $K_{1p}\subset K_{1p}'$, we have $\Delta=\Delta(K_{1p})=\Delta(K_{1p}')$.

\subsection{The perfectoid construction}
In this subsection, we will always work on schemes or adic spaces over the perfectoid field $\C_p$, so we omit the subscript of the base field $\C_p$ from now on. Since we are working with the generic fiber, as in the last subsection, we will not assume that the reductive group $G$ of a Shimura datum $(G, X)$ is unramified at $p$. Sometimes it is convenient to fix a maximal open compact subgroup $G(\Z_p)$ of $G(\Q_p)$, which means that we fix some suitable integral model $G_{\Z_p}$ of $G$ over $\Z_p$ and take $G(\Z_p)=G_{\Z_p}(\Z_p)$.

Let $(G,X)$ be a Shimura datum. For $K^p\subset G(\A_f^p)$ fixed, the scheme
\[\Sh_{K^p}(G, X)=\varprojlim_{K_p}\Sh_{K_pK^p}(G,X)\]
forms a pro-finite \'etale cover of the Shimura variety $\Sh_{G(\Z_p)K^p}(G,X)$ over $\C_p$. In fact,
\[\Sh_{K^p}(G, X)\lra \Sh_{G(\Z_p)K^p}(G,X)\]is a $G(\Z_p)$-torsor for the pro-\'etale topology on $\Sh_{G(\Z_p)K^p}(G,X)$. If we use the language of pro-\'etale topology as developed in \cite{Sch2}, \[\Sh_{K^p}(G, X)=\varprojlim_{K_p}\Sh_{K_pK^p}(G,X)\]is an object in the pro-\'etale site $\Sh_{G(\Z_p)K^p}(G,X)_{\tr{pro\'et}}$. In fact, it is in the pro-finite \'etale site $\Sh_{G(\Z_p)K^p}(G,X)_{\tr{prof\'et}}$. We use the fully faithful embedding of categories \[\Sh_{G(\Z_p)K^p}(G,X)_{\tr{prof\'et}}\subset \Sh_{G(\Z_p)K^p}(G,X)_{\tr{pro\'et}}\] to view it as an object of the pro-\'etale site. 

For any open compact subgroup $K_p\subset G(\Q_p)$, let $\Sh_{K_pK^p}(G,X)^{ad}$ be the associated adic space of $\Sh_{K_pK^p}(G,X)$ over $\C_p$. We have the following theorem of Scholze.
\begin{theorem}[\cite{Sch3},Theorem IV. 1.1]\label{thm} 
If $(G,X)$ is a Hodge type Shimura datum, then there is a perfectoid space $S_{K^p}$ over $\C_p$ such that
\[S_{K^p}\sim \varprojlim_{K_p}\Sh_{K_pK^p}(G,X)^{ad},\]
where $\sim$ is as in Definition \ref{def}.
\end{theorem}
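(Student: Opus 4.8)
The plan is to reduce to the Siegel case via the Hodge embedding, and then to analyze the Siegel tower near the ordinary locus. Since $(G,X)$ is of Hodge type, fix a closed embedding of Shimura data $(G,X)\hookrightarrow(\GSp_{2g},S^{\pm})$; for a suitable prime-to-$p$ level $K'^p$ containing the image of $K^p$ and compatible levels at $p$, each $\Sh_{K_pK^p}(G,X)$ is a closed subscheme of $\Sh_{K'_pK'^p}(\GSp_{2g},S^{\pm})$, compatibly as $K_p$ shrinks. I would then carry out three steps: (i) prove that the minimal compactification of the Siegel modular variety at infinite level at $p$ is perfectoid; (ii) deduce that the open Siegel variety at infinite level is perfectoid and that the closed subspace cut out by the inverse system $(\Sh_{K_pK^p}(G,X)^{ad})_{K_p}$ is again perfectoid; (iii) verify the homeomorphism-and-density condition of \cite{SW} Definition 2.4.1 (our Definition \ref{def}) at each stage, using Lemma \ref{L:sub}.

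Step (i) is the core. One works with an integral model over $\Z_p$ of the minimal compactification of $\Sh_{K'^p}(\GSp_{2g},S^{\pm})$, its formal completion $\mathfrak{X}$ along the special fibre, and the quasi-compact ``anticanonical'' neighbourhood $\mathfrak{X}^{a}$ of the ordinary locus (the locus where a fixed lift of the Hasse invariant is large). On $\mathfrak{X}^a$ the canonical subgroup $C_m$ of the universal abelian variety exists for all $m$; an anticanonical $\Gamma_0(p^m)$-level structure is a choice of complement to $C_m$, and the key point is that climbing the anticanonical $\Gamma_0(p^m)$-tower is, on the formal model, relative Frobenius up to a bounded power of $p$. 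Hence the $\varpi$-adic completion of the colimit of coordinate rings of the $\mathfrak{X}^{a}_{\Gamma_0(p^m)}$ is a perfectoid ring, so the generic fibre $\varprojlim_m\Sh^{a}_{\Gamma_0(p^m)K'^p}(\GSp_{2g},S^{\pm})^{*,ad}$ is affinoid perfectoid; adjoining full level at $p$ amounts to a pro-finite-\'etale cover of this perfectoid space and remains perfectoid. To propagate perfectoidness to the whole minimal compactification I would use the action of $\GSp_{2g}(\Q_p)$ by $p$-adic Hecke correspondences: finitely many of its translates of the anticanonical locus already cover $\Sh_{\Gamma(p)K'^p}(\GSp_{2g},S^{\pm})^{*,ad}$ — finiteness coming from quasi-compactness of this proper adic space — so the infinite-level minimal compactification is covered by finitely many affinoid perfectoid opens, hence perfectoid. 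This is exactly where the minimal compactification, rather than the open variety, is indispensable.

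For step (ii): an open subspace of a perfectoid space is perfectoid, so the open Siegel variety at infinite level is perfectoid; the closed subsystem $(\Sh_{K_pK^p}(G,X)^{ad})_{K_p}$ gives, by Lemma \ref{L:sub}, a closed subspace $S_{K^p}$ of it with $S_{K^p}\sim\varprojlim_{K_p}\Sh_{K_pK^p}(G,X)^{ad}$, and I would show this is a Zariski closed subspace and invoke that Zariski closed subspaces of affinoid perfectoid spaces are perfectoid — a theorem of Scholze for which some care with the definition of the quotient Huber pair (and ultimately almost purity) is needed, but which applies here since the $\Sh_{K_pK^p}(G,X)$ are cut out in the Siegel varieties by explicit algebraic equations. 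For step (iii), at each stage the underlying topological space is $\varprojlim_{K_p}|\Sh_{K_pK^p}(G,X)^{ad}|$ by construction, and density of $\varinjlim_{K_p}\mathcal{O}(\Sh_{K_pK^p}(G,X)^{ad})$ in $\mathcal{O}(S_{K^p})$ on an affinoid cover holds because, locally, $S_{K^p}$ is built as the $\varpi$-adic completion of that colimit.

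The step I expect to be the main obstacle is the ``Frobenius up to bounded denominators'' claim in step (i): making it rigorous requires the full theory of the canonical subgroup (existence, functoriality, behaviour under quotients) on a carefully chosen neighbourhood of the ordinary locus, together with an honest integral/formal model on which the anticanonical correspondences and the $\GSp_{2g}(\Q_p)$-action can be analyzed and on which quasi-compactness of the minimal compactification can be exploited. The Zariski-closed-implies-perfectoid input of step (ii) is the secondary pitfall, since the statement fails without hypotheses and one must check the relevant conditions are met in this geometric setting.
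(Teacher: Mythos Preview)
This theorem is not proved in the paper: it is quoted verbatim from \cite{Sch3}, Theorem IV.1.1, and used as a black box input for the abelian-type case. So there is no ``paper's own proof'' to compare against.

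That said, your sketch is a faithful outline of Scholze's original argument in \cite{Sch3}. The three-step architecture (anticanonical tower over the ordinary locus of the minimal compactification, propagation via the $\GSp_{2g}(\Q_p)$-action, then Zariski-closed descent to the Hodge-type subvariety) is exactly the structure of Chapter III of \cite{Sch3}, and you have correctly identified the two genuine pressure points: the ``relative Frobenius up to bounded $p$-power'' control on the anticanonical $\Gamma_0(p^m)$-tower (this is \cite{Sch3} Corollary III.2.19 and the surrounding material), and the fact that Zariski-closed subspaces of perfectoid spaces are perfectoid (\cite{Sch3} Lemma II.2.2). One refinement worth noting: the covering of the minimal compactification by Hecke translates of the anticanonical locus is not quite immediate from quasi-compactness alone---Scholze uses the Hodge--Tate period map to $\Fl_G$ and the fact that $\GSp_{2g}(\Q_p)$ acts transitively on $\Fl_G(\Q_p)$, together with a delicate analysis near the boundary (\cite{Sch3} Lemma III.3.11 and Theorem III.3.17), to produce the finite cover. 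Your sketch elides this, but it is where most of the remaining work lies.
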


We keep to assume that $(G,X)$ is of Hodge type. Scholze's theorem says that, the object \[\Sh_{K^p}\in \Sh_{G(\Z_p)K^p}(G,X)^{ad}_{\tr{pro\'et}}\] with the pro-\'etale presentation $\Sh_{K^p}=\varprojlim_{K_p}\Sh_{K_pK^p}(G,X)^{ad}$ is perfectoid, and $S_{K^p}$ is the associated perfectoid space, cf. \cite{Sch2} Definition 4.3 (ii). Since there is a morphism of ringed spaces \[\Sh_{K_pK^p}(G,X)^{ad}\ra \Sh_{K_pK^p}(G,X)\] by construction, passing to limits over $K_p$, we get a map on the underlying topological spaces
\[|S_{K^p}|=\varprojlim_{K_p}|\Sh_{K_pK^p}(G,X)^{ad}|\ra |\Sh_{K^p}(G, X)|=\varprojlim_{K_p}|\Sh_{K_pK^p}(G,X)|.\]
We have a natural map of sites \[\Sh_{G(\Z_p)K^p}(G,X)^{ad}_{\tr{pro\'et}}\lra \Sh_{G(\Z_p)K^p}(G,X)_{\tr{pro\'et}},\]given by the analytification functor. Clearly, 
$\Sh_{K^p}$ is sent to $\Sh_{K^p}(G, X)=\varprojlim_{K_p}\Sh_{K_pK^p}(G,X)$ under the map \[\Sh_{G(\Z_p)K^p}(G,X)^{ad}_{\tr{pro\'et}}\lra \Sh_{G(\Z_p)K^p}(G,X)_{\tr{pro\'et}}.\]

Before proceeding further, we would like to discuss the perfectoid version of \cite{D} 2.7.1-2.7.3. Let us recall the setting, for which we have already seen several examples given by Shimura varieties in the previous subsections.  Let $\Gamma$ be a locally profinite group, and consider a projective system $(S_K)_K$ of schemes, indexed by the open compact subgroups $K$ of $\Gamma$. Suppose that for each $g\in \Gamma$, there is a given isomorphism of schemes
\[\rho_K(g): S_K\ra S_{gKg^{-1}},\]
such that 
\begin{itemize}
	\item $\rho_K(k)=id, \,\forall\, k\in K$,
	\item if $L\vartriangleleft K$ is a normal subgroup, then those $\rho_L(k)$ define a right action of $K/L$ on $S_L$, and we have
	\[S_L/(K/L)\simeq S_K.\]
\end{itemize}
Let $S=\varprojlim_K S_K$. We call it a scheme equipped with a continuous right action of $\Gamma$. Let $\pi$ be a profinite set, equipped with a continuous right action of $\Gamma$. We assume that this action is transitive, and the orbits of an open compact subgroup are open. Fix a point $e\in\pi$. Let $\Delta$ be the stabilizer of $e$ in $\Gamma$. Then we have a homeomorphism $\Gamma/\Delta\ra \pi$ induced by the group action. Let $S$ be a scheme equipped with a continuous right action of $\Gamma$, and a continuous $\Gamma$-equivariant map $S\ra \pi$. Let $S_e$ the fiber of $S$ over $e$. It is equipped with a continuous right action of $\Delta$: for $K\subset \Gamma$ open compact, $S_{e}/(K\cap \Delta)$ is the fiber over $eK$ of $S/K\ra \pi/K$, and \[S_e= \varprojlim_K S_e/(K\cap\Delta).\]
\begin{proposition}[\cite{D} Lemme 2.7.3]\label{P:delgine}
 $S\mapsto S_e$ is an equivalence of the category of schemes equipped with a continuous right action of $\Gamma$ and a continuous equivariant map to $\pi$, and the category of schemes equipped with a continuous right action of $\Delta$.
\end{proposition}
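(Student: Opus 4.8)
The plan is to recognize this as the elementary \emph{induction--restriction} equivalence attached to the transitive $\Gamma$-set $\pi$: via the homeomorphism $\Delta\backslash\Gamma\xrightarrow{\sim}\pi$, a scheme with continuous right $\Gamma$-action together with an equivariant map to $\pi$ is the same datum as a ``$\Gamma$-equivariant object over $\Delta\backslash\Gamma$'', which by descent along $\Gamma\to\Delta\backslash\Gamma$ is the same as a $\Delta$-equivariant object over a point, i.e.\ a scheme with continuous right $\Delta$-action; the functor effecting this is exactly $S\mapsto S_e$. So the first thing I would do is fix the pro-system descriptions: a scheme with continuous right $\Gamma$-action is a filtered system $(S_K)_K$ indexed by the open compact $K\subseteq\Gamma$ with the isomorphisms $\rho_K(g)$ satisfying the two bulleted conditions, a morphism being a compatible family $(S_K\to S'_K)_K$, and similarly for $\Delta$; here one uses that $\Delta$, being closed in the locally profinite group $\Gamma$, carries the subspace topology, so the subgroups $K\cap\Delta$ ($K\subseteq\Gamma$ open compact) are cofinal among the open compact subgroups of $\Delta$.

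Next I would construct the quasi-inverse ``induction''. Given $T$ with continuous right $\Delta$-action, written $T=\varprojlim_{\Delta'}T_{\Delta'}$, and an open compact $K\subseteq\Gamma$, the orbit set $\pi/K=\Delta\backslash\Gamma/K$ is \emph{finite} since $\pi$ is profinite and $K$ acts with open orbits; choosing representatives $g_1,\dots,g_n$, set
\[(\mathrm{Ind}\,T)_K:=\coprod_{j=1}^{n}T_{\,\Delta\cap g_jKg_j^{-1}},\]
a finite disjoint union of schemes. One then checks: independence of the choice of the $g_j$ up to canonical isomorphism via the right $\Delta$-action of $T$; the transition maps for $K'\subseteq K$ from the projection $\Delta\backslash\Gamma/K'\to\Delta\backslash\Gamma/K$ and the transition maps of $T$; the isomorphisms $\rho_K(h)$ by permuting components along the right $\Gamma$-action of $h$ on $\Delta\backslash\Gamma$ and matching components over a common $\Delta$-coset through the $\Delta$-action of $T$; the two bulleted conditions, by construction; and the equivariant map $\mathrm{Ind}\,T\to\pi$, by remembering the indexing double coset. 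Since the component indexed by the trivial double coset $\Delta\cdot 1\cdot K=\Delta K$ is $T_{K\cap\Delta}$, one gets $(\mathrm{Ind}\,T)_e=\varprojlim_K T_{K\cap\Delta}\cong T$ canonically and $\Delta$-equivariantly, which yields essential surjectivity of $S\mapsto S_e$.

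For full faithfulness I would exploit that $\pi$ is a single $\Gamma$-orbit, so at every level
\[S_K=\coprod_{\Delta gK\,\in\,\Delta\backslash\Gamma/K}(S_K)_{\Delta gK},\qquad (S_K)_{\Delta gK}\;\cong\;S_e/(\Delta\cap gKg^{-1}),\]
the isomorphism being induced by $\rho_K(g)$; crucially, the relation $S_L/(K/L)\simeq S_K$ is precisely what identifies the fiber of $S/K\to\pi/K$ over the trivial coset with $S_e/(K\cap\Delta)$, which one then translates by $\rho_K(g)$. Hence a $\Gamma$-equivariant morphism $S\to S'$ over $\pi$ is determined by the ($\Delta$-equivariant) morphism $S_e\to S'_e$ it induces, and conversely any $\Delta$-equivariant $\psi\colon S_e\to S'_e$ extends: on $(S_K)_{\Delta gK}$ transport $\psi$ through the displayed isomorphisms; independence of the representative $g$ follows from $\Delta$-equivariance of $\psi$, and compatibility with the transition maps and with the $\rho$'s is formal. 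Together with the previous step this shows $S\mapsto S_e$ is an equivalence of categories.

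The hard part is not conceptual but organizational: keeping the double-coset bookkeeping consistent, pinning down the exact variances of the maps $\rho_K(h)$ so that the two bulleted conditions and the right-action property genuinely hold, and making sure every scheme-theoretic operation invoked (finite disjoint unions and quotients by finite groups of the schemes in play) is legitimate --- the last point being harmless in the quasi-projective situation relevant here. As the statement is verbatim \cite{D} Lemme~2.7.3, one may alternatively just cite that reference and its proof.
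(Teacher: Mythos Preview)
The paper does not actually supply a proof of this proposition: it is stated with the attribution ``\cite{D} Lemme 2.7.3'' and nothing more, and the subsequent line $S=[\Gamma\times S_e]/\Delta$ is recorded as a consequence rather than as part of an argument. Your proposal therefore goes well beyond the paper's treatment by giving a correct and fairly complete sketch of the induction--restriction equivalence; the explicit inverse functor via finite disjoint unions indexed by $\Delta\backslash\Gamma/K$ and the full-faithfulness argument via the decomposition of $S_K$ over $\pi/K$ are exactly the content of Deligne's proof. Your closing remark that one may simply cite \cite{D} Lemme 2.7.3 is precisely what the paper does.
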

For $S$ and $S_e$ as in the above proposition, we have
\[S=[\Gamma\times S_e]/\Delta,\]where $\Delta$ acts on $\Gamma\times S_e$ by $(\gamma',s)\gamma=(\gamma^{-1}\gamma', s\gamma)$.

Let $k$ be a perfectoid field, and $S$ be a scheme equipped with a continuous right action of $\Gamma$ over $k$, such that for any $K$, the scheme $S_K$ is locally of finite type over $k$. Let $\pi, e, S_e$ and $\Delta$ be as above. Let $S_{K}^{ad}, (S_e/(K\cap\Delta))^{ad}$ be the adic spaces over $k$ associated to $S_K$ and $S_e/(K\cap\Delta)$ respectively.
\begin{proposition}\label{P:connected}
	Assume that there exists a perfectoid space $\S$ over $k$ such that  $\S\sim \varprojlim_K S_K^{ad}$.
	\begin{enumerate}
		\item There is an induced action of $\Gamma$ on $\mathcal{S}$.
		\item There is a $\Gamma$-equivariant map $\mathcal{S}\ra \pi$. If we view $\pi$ as a perfectoid space over $k$ (cf. Lemma \ref{L:profinite}). Then the map $\mathcal{S}\ra \pi$ is a morphism of perfectoid spaces.
		\item Let $\mathcal{S}_e$ be the fiber over $e$ of this morphism. Then $\mathcal{S}_e$ is equipped with a right action of $\Delta$, and we have 
		\[\S_e\sim\varprojlim_K (S_e/(K\cap\Delta))^{ad}.\]In particular, $\S_e$ is perfectoid. Moreover, we have an isomorphism of perfectoid spaces \[\mathcal{S}\simeq [\Gamma\times \mathcal{S}_e]/\Delta.\] 
	\end{enumerate}
\end{proposition}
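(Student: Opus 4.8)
The plan is to reduce everything to the statements about schemes recalled above (Proposition~\ref{P:delgine} and the induction formula following it) combined with the perfectoid-space machinery from Section~2, in particular Lemma~\ref{L:sub}, Lemma~\ref{L:perfectoid-local}, Corollary~\ref{C:quotient} and Corollary~\ref{C:Quotient}.

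First I would establish (1) and (2). For each $g\in\Gamma$ and each open compact $K$ the isomorphism $\rho_K(g)\colon S_K\to S_{gKg^{-1}}$ analytifies to an isomorphism $S_K^{ad}\to S_{gKg^{-1}}^{ad}$ of adic spaces over $k$; passing to the inverse limit and using the uniqueness of the perfectoid space in Definition~\ref{def} (\cite{SW} Proposition 2.4.5, quoted after Lemma~\ref{L:perfectoid-local}), these assemble to an automorphism $\rho(g)$ of $\mathcal{S}$, giving the continuous right $\Gamma$-action of (1). For (2), the $\Gamma$-equivariant map $S\to\pi$ of schemes is, at finite level, a map $S_K\to\pi/K$ to the finite set $\pi/K$; analytifying and taking the limit over $K$ gives a compatible family $\mathcal{S}\to(\pi/K)^{ad}$, hence a map $\mathcal{S}\to\pi^{ad}=\varprojlim_K(\pi/K)^{ad}$ to the perfectoid space attached to $\pi$ by Lemma~\ref{L:profinite}; this is a morphism of perfectoid spaces since it is a morphism of adic spaces with perfectoid source and target.

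Next, for (3), the fiber $\mathcal{S}_e$ is defined as the preimage of the point $e\in|\pi^{ad}|$. At finite level the fiber of $S_K^{ad}\to(\pi/K)^{ad}$ over the image of $e$ is the open-and-closed subspace $(S_e/(K\cap\Delta))^{ad}$, since $e$ is an isolated point of the finite set $\pi/K$ and by the scheme-level identity $S_e/(K\cap\Delta)=$ fiber of $S/K\to\pi/K$ over $eK$ recalled before Proposition~\ref{P:delgine}. These fibers form an open (indeed closed) subsystem of $(S_K^{ad})_K$ whose transition maps are finite; by Lemma~\ref{L:sub} there is an open subspace $\mathcal{S}_e\subset\mathcal{S}$ with $\mathcal{S}_e\sim\varprojlim_K(S_e/(K\cap\Delta))^{ad}$, and it is perfectoid because an open subspace of a perfectoid space is perfectoid. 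The $\Delta$-action on $\mathcal{S}_e$ is the restriction of the $\Gamma$-action of (1): an element $\delta\in\Delta$ fixes $e$, hence preserves the fiber $\mathcal{S}_e$. Finally, for the isomorphism $\mathcal{S}\simeq[\Gamma\times\mathcal{S}_e]/\Delta$, I would interpret the right-hand side as the perfectoid space obtained as follows: writing $\Gamma=\varprojlim_K\Gamma/K$ and $\Gamma/\Delta\xrightarrow{\sim}\pi$, one has $[\Gamma\times\mathcal{S}_e]/\Delta\sim\varprojlim_K[(\Gamma/K)\times(S_e/(K\cap\Delta))^{ad}]$, and the scheme-level identity $S=[\Gamma\times S_e]/\Delta$ gives, at each finite level, $S_K=[(\Gamma/K)\times(S_e/(K\cap\Delta))]$ as schemes (a finite disjoint union, the analytification commuting with it). So both $\mathcal{S}$ and $[\Gamma\times\mathcal{S}_e]/\Delta$ are perfectoid spaces $\sim\varprojlim_K S_K^{ad}$; by uniqueness (\cite{SW} Proposition 2.4.5) they agree, compatibly with all structure.

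The main obstacle I anticipate is the careful handling of the quotient $[\Gamma\times\mathcal{S}_e]/\Delta$: $\Delta$ is in general only profinite, not finite, so Corollary~\ref{C:quotient} does not directly apply to it, and one must instead form the quotient level by level (at level $K$ the relevant group $\Delta/(K\cap\Delta)$ acts freely on a \emph{finite disjoint union} of copies of $S_e/(K\cap\Delta)$, which causes no trouble) and then pass to the limit, rather than trying to take a genuine quotient of perfectoid spaces in one step. Equivalently, one should present $[\Gamma\times\mathcal{S}_e]/\Delta$ directly as the object $\sim\varprojlim_K S_K^{ad}$ and invoke uniqueness, side-stepping any delicate point about representability of profinite-group quotients; this is exactly the route Corollary~\ref{C:Quotient} suggests. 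The remaining verifications—that the $\Gamma$-action is continuous, that the fiber formation is compatible with the inductions, and that all the $\sim$-relations are preserved—are routine bookkeeping built on Definition~\ref{def}, Lemma~\ref{L:sub} and \cite{SW} Propositions~2.4.2--2.4.5.
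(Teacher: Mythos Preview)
Your proposal is essentially correct and follows the same route as the paper: analytify the finite-level maps, invoke the universal property/uniqueness of perfectoid limits (\cite{SW}~Proposition~2.4.5) to obtain the $\Gamma$-action and the map $\mathcal{S}\to\pi$, and then use Lemma~\ref{L:sub} on the open-and-closed fiber subsystem to get $\mathcal{S}_e\sim\varprojlim_K(S_e/(K\cap\Delta))^{ad}$.

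The one point where you diverge from the paper is the last isomorphism $\mathcal{S}\simeq[\Gamma\times\mathcal{S}_e]/\Delta$. Your finite-level formula $S_K=[(\Gamma/K)\times(S_e/(K\cap\Delta))]$ is not quite right: the index set is $\pi/K\simeq\Delta\backslash\Gamma/K$, not $\Gamma/K$, and the fibers are $S_e/(\gamma K\gamma^{-1}\cap\Delta)$, which depend on the coset $\gamma$ (this is the formula used in the proof of Proposition~\ref{P:perf-induction}). The paper avoids this bookkeeping entirely by observing directly that $\mathcal{S}\simeq\mathcal{S}_e\times_k\pi=\mathcal{S}_e\times_k(\Gamma/\Delta)$ as perfectoid spaces, which is immediate once one has the fiber description, and then interprets $[\Gamma\times\mathcal{S}_e]/\Delta$ as exactly this product. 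This sidesteps the ``obstacle'' you anticipate: one never forms a genuine quotient by the profinite group $\Delta$, but simply reads off the answer from the product decomposition. Your level-by-level argument can be repaired with the correct finite-level formula, but the paper's one-line shortcut is cleaner.
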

\begin{proof}
	(1) For each $g\in \Gamma$, by GAGA, we get an isomorphism of adic spaces
	\[\rho_K(g): S_K^{ad}\ra S_{gKg^{-1}}^{ad},\]satisfying the similar properties to the case of schemes. By assumption,  we have a compatible family of morphisms $f_K: \mathcal{S}\ra S_{K}^{ad}$. Then, the composition morphisms $\rho_K(g)\circ f_K$ form a compatible family. Since $\mathcal{S}$ is perfectoid, by \cite{SW} Proposition 2.4.5, there is a unique morphism $g: \mathcal{S}\ra \mathcal{S}$, such that $\rho_K(g)\circ f_K=f_K\circ g$. In this way we get a right action of $\Gamma$ on $\mathcal{S}$.
	
	(2) and (3): For each $K$, we get an induced map $S_K=S/K\ra \pi/K$. View $\pi/K$ as a finite scheme over $k$. Then passing to the adic spaces, we get $S_K^{ad}\ra (\pi/K)^{ad}$. Composing this map with $f_K: \S\ra S_K^{ad}$, we get a compatible family of morphisms
	\[\S\ra (\pi/K)^{ad}.\]Note the underlying topological spaces of $\pi/K$ and $(\pi/K)^{ad}$ are the same.
	If we view $\pi$ as a perfectoid space over $k$, then $\pi\sim\varprojlim_K(\pi/K)^{ad}$. Again, by applying Proposition 2.4.5 of \cite{SW}, we get a morphism of perfectoid spaces \[\S\ra\pi,\]which is easily to seen $\Gamma$-equivariant. If we consider the fiber $\S_e$ over $e\in\pi$ of this morphism, then by construction, $\S_e$ is $\Delta$-invariant under the right action of $\Gamma$ on $\S$. To show that
	\[\S_e\sim \varprojlim_K(S_e/(K\cap \Delta))^{ad},\] we note first that the image of the composition
	\[\S_e\hookrightarrow \S \ra S_{K}^{ad}\ra (\pi/K)^{ad}\] is $eK$. Thus $f_K$ induces $\S_e\ra (S_e/(K\cap \Delta))^{ad}$. Moreover, the following diagram is catesian
	\[\xymatrix{\S_e\ar[d]\ar[r]&\S\ar[d]\\
	(S_e/(K\cap \Delta))^{ad}\ar[r]&S_{K}^{ad}.}\]Thus we can conclude, cf. Proposition \ref{L:sub}. Finally, the isomorphism 
\[\mathcal{S}\simeq [\Gamma\times \mathcal{S}_e]/\Delta\] is deduced by the isomorphism 
\[\S\simeq \S_e\times \pi=\S_e\times (\Gamma/\Delta).\]
	\end{proof}

We apply the above construction to the perfectoid Shimura variety $S_{K^p}$ in Theorem \ref{thm}.
\begin{corollary}\label{C:connected}
Let $(G,X)$ be a Hodge type Shimura datum and $K^p\subset G(\A_f^p)$ be a sufficiently small open compact subgroup.
\begin{enumerate}
\item There is an action of $\mathcal{A}(K^p)$ on the perfectoid space $S_{K^p}$, and a $\mathcal{A}(K^p)$-equivariant morphism of perfectoid spaces \[S_{K^p}\ra \mathcal{A}(K^p)/\mathcal{A}(K^p)^0.\]
\item There is an open and closed perfectoid subspace $S^0_{K^p}\subset S_{K^p}$ over $\C_p$ such that
\[S^0_{K^p}\sim \varprojlim_{K_p}\Sh_{K_pK^p}^0(G,X)^{ad}.\]The subspace $S^0_{K^p}$ is stable under the action of the subgroup $\mathcal{A}(K^p)^0$ of $\mathcal{A}(K^p)$. Moreover, we have an isomorphism of perfectoid spaces
\[S_{K^p}\simeq [\mathcal{A}(K^p)\times S^0_{K^p}]/\mathcal{A}(K^p)^0.\]
\end{enumerate}
\end{corollary}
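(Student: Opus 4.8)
The plan is to obtain the corollary as the special case of Proposition~\ref{P:connected} attached to Scholze's perfectoid space of Theorem~\ref{thm}. Concretely, I would fix the dictionary
\[
k=\C_p,\qquad \Gamma=\mathcal{A}(K^p),\qquad \pi=\mathcal{A}(K^p)/\mathcal{A}(K^p)^0=G(\Q_p)/G_0,
\]
with $e\in\pi$ the distinguished point and $\Delta=\mathcal{A}(K^p)^0=\mathrm{Stab}_\Gamma(e)$, and for the projective system $(S_K)_K$ the finite-level Shimura varieties $\Sh_{K_pK^p}(G,X)$ indexed by the open compact $K_p\subset G(\Q_p)$, which under $G(\Q_p)\hookrightarrow\mathcal{A}(K^p)$ form a cofinal family of open compact subgroups of $\Gamma$. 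Here $K^p$ is assumed small enough that the $\Sh_{K_pK^p}(G,X)$ are honest smooth quasi-projective $\C_p$-schemes and the maps $\rho_K(g)$ are the usual Hecke isomorphisms; together with the finite-quotient description $\Sh_L(G,X)/(K/L)\simeq\Sh_K(G,X)$ this endows $S:=\Sh_{K^p}(G,X)=\varprojlim_{K_p}\Sh_{K_pK^p}(G,X)$ with the structure of a scheme carrying a continuous right $\Gamma$-action recalled in subsection~3.2, together with the $\Gamma$-equivariant map $S\to\pi$ whose fiber over $e$ is $S_e=\Sh^0_{K^p}(G,X)=\varprojlim_{K_p}\Sh^0_{K_pK^p}(G,X)$. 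Finally I would take $\mathcal{S}=S_{K^p}$, which by Theorem~\ref{thm} is a perfectoid space with $\mathcal{S}\sim\varprojlim_{K_p}\Sh_{K_pK^p}(G,X)^{ad}=\varprojlim_K S_K^{ad}$, so that all the hypotheses of Proposition~\ref{P:connected} are in force.

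With this in hand, parts (1) and (2) of Proposition~\ref{P:connected} yield the $\mathcal{A}(K^p)$-action on $S_{K^p}$ and the $\mathcal{A}(K^p)$-equivariant morphism $S_{K^p}\to\mathcal{A}(K^p)/\mathcal{A}(K^p)^0$, which is assertion~(1) of the corollary. Part~(3) produces $S^0_{K^p}:=(S_{K^p})_e$: it is a perfectoid space stable under $\mathcal{A}(K^p)^0=\mathrm{Stab}_\Gamma(e)$, it satisfies
\[
S^0_{K^p}\sim\varprojlim_{K_p}\bigl(S_e/(K_p\cap\mathcal{A}(K^p)^0)\bigr)^{ad}=\varprojlim_{K_p}\Sh^0_{K_pK^p}(G,X)^{ad},
\]
and it sits in the isomorphism $S_{K^p}\simeq[\mathcal{A}(K^p)\times S^0_{K^p}]/\mathcal{A}(K^p)^0$. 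That $S^0_{K^p}$ is open and closed in $S_{K^p}$ follows because it is the preimage of the point $e$ under a morphism of adic spaces to (the perfectoid space attached to, cf. Lemma~\ref{L:profinite}) the profinite set $\pi$, hence closed, and is inherited as a clopen subspace from the fact that each $\Sh^0_{K_pK^p}(G,X)^{ad}$ is open and closed in $\Sh_{K_pK^p}(G,X)^{ad}$. This gives assertion~(2).

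The only real work, and the step I expect to be the main obstacle, is the translation: checking that the scheme-theoretic package of subsection~3.2 --- the continuous right $\mathcal{A}(K^p)$-action on $\Sh_{K^p}(G,X)$, the equivariant map to $\pi=G(\Q_p)/G_0$, and the identification of its $e$-fiber with $\Sh^0_{K^p}(G,X)$ --- really fits the axioms imposed on the pair $(S,\pi)$ before Proposition~\ref{P:connected}. In particular one must verify that $\{K_p\}_{K_p\subset G(\Q_p)}$ is cofinal among open compact subgroups of $\mathcal{A}(K^p)$ for the purpose of forming the limit, that $\pi$ is a genuine profinite set on which open compacts act with open (indeed finite-index) orbits so that Lemma~\ref{L:profinite} applies, and that the $\rho_K(g)$'s and the finite-quotient identities descend compatibly to the adic and then perfectoid level. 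Once this bookkeeping is done, everything else is a formal consequence of Theorem~\ref{thm} and Proposition~\ref{P:connected} (the latter itself resting on \cite{SW} Proposition~2.4.5, Lemma~\ref{L:sub}, and the existence of fiber products of perfectoid spaces).
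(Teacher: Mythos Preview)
Your proposal is correct and follows exactly the paper's approach: the paper presents Corollary~\ref{C:connected} as the direct specialization of Proposition~\ref{P:connected} to Scholze's perfectoid space $S_{K^p}$ of Theorem~\ref{thm}, and you have spelled out the dictionary and the bookkeeping needed to make that specialization precise. The extra care you take in flagging the cofinality of the $K_p$'s inside $\mathcal{A}(K^p)$ and the verification that the Deligne--style axioms on $(S,\pi)$ hold is appropriate, but it is exactly the routine translation the paper leaves implicit.
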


As the case of $S_{K^p}$, we have an explanation of $S_{K^p}^0$ in the above corollary by using pro-\'etale sites as follows.
Let \[\Sh^0_{K^p}\in \Sh^0_{G(\Z_p)K^p}(G,X)^{ad}_{\tr{pro\'et}}\] be the object with pro-\'etale presentation $\Sh^0_{K^p}=\varprojlim_{K_p}\Sh^0_{K_pK^p}(G,X)^{ad}$. Then it is perfectoid, and the associated perfectoid space $\wh{\Sh^0_{K^p}}$ is exactly $S^0_{K^p}$.
Similar to the case of $S_{K^p}$, $\Sh^0_{K^p}$ is sent to $\Sh^0_{K^p}(G, X)=\varprojlim_{K_p}\Sh^0_{K_pK^p}(G,X)$ under the map of sites \[\Sh^0_{G(\Z_p)K^p}(G,X)^{ad}_{\tr{pro\'et}}\lra \Sh^0_{G(\Z_p)K^p}(G,X)_{\tr{pro\'et}}.\]

We will need a construction from the inverse direction as in Proposition \ref{P:connected}. 
\begin{proposition}\label{P:perf-induction}
	Let the setting be as in the paragraph above Proposition \ref{P:connected}. Assume that there exists a perfectoid space $\S_e$ over $k$ such that $\S_e\sim\varprojlim_K (S_e/(K\cap\Delta))^{ad}$. Then there exist a right action of $\Delta$ on $\S_e$, and a perfectoid space $\mathcal{S}$ over $k$, equipped an action of $\Gamma$ and a $\Gamma$-equivariant morphism of perfectoid spaces
	\[\mathcal{S}\ra \pi,\] such that $\S\sim \varprojlim_K S_K^{ad}$ and $\S_e$ is isomorphic to the fiber at $e$ of this morphism.
	\end{proposition}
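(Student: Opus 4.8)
The statement is a converse to Proposition~\ref{P:connected}: we are handed a perfectoid $\S_e$ with $\S_e\sim\varprojlim_K(S_e/(K\cap\Delta))^{ad}$ and must reconstruct the ``big'' perfectoid space $\S$ over $\pi\cong\Gamma/\Delta$. The natural candidate, dictated by the scheme-level formula $S=[\Gamma\times S_e]/\Delta$, is to glue copies of $\S_e$ indexed by $\Gamma/\Delta$. First I would produce the right $\Delta$-action on $\S_e$: for each $\delta\in\Delta$ the isomorphisms $\rho_L(\delta)$ at finite level give a compatible family of automorphisms of the tower $(S_e/(K\cap\Delta))^{ad}$, and since $\S_e$ is perfectoid, \cite{SW} Proposition 2.4.5 yields a unique automorphism $\delta\colon\S_e\to\S_e$; functoriality of that uniqueness makes $\delta\mapsto\delta$ a right action. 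Next, for each open compact $K\subset\Gamma$ form the finite set $\Gamma/K\Delta$ — equivalently the $K$-orbits on $\pi$ — and set $S_K^{ad}$ to be the disjoint union, over a set of representatives $g$ of $\Gamma/K\Delta$, of $(S_e/(K\cap g^{-1}\Delta g))^{ad}\cong(S_e/(K\cap\Delta))^{ad}$; these assemble into a tower with finite transition maps reproducing the original $(S_K^{ad})_K$ up to the chosen identification (this is exactly the finite-level incarnation of $[\Gamma\times S_e]/\Delta$, cf.\ the paragraph after Proposition~\ref{P:delgine}).

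**Constructing $\S$.** Now I would take the inverse limit. Since $\pi$ is a profinite set carrying a transitive $\Gamma$-action with open orbits, $\Gamma/\Delta\to\pi$ is a homeomorphism and $\pi\sim\varprojlim_K(\pi/K)^{ad}$ by Lemma~\ref{L:profinite}. The tower $(S_K^{ad})_K$ has finite transition maps, and each transition map $S_{K'}^{ad}\to S_K^{ad}$ is, componentwise, the finite \'etale cover $(S_e/(K'\cap\Delta))^{ad}\to(S_e/(K\cap\Delta))^{ad}$ from the $\S_e$-tower. I claim the limit exists as a perfectoid space. Locally this is clear: over any affinoid perfectoid $\Spa(A_e,A_e^+)\subset\S_e$ arising as in Lemma~\ref{L:perfectoid-local} from the $\S_e$-tower, the corresponding affinoids in $\S_e/(K\cap\Delta))^{ad}$ have $\varpi$-adic colimit $A_e$; taking finitely many disjoint copies (one per point of $\pi/K$ over the relevant point) and passing to the limit, the colimit along the $S_K^{ad}$-tower is a finite product of copies of $A_e$, which is again perfectoid. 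Gluing these affinoid perfectoid pieces — they cover because the $\Spa(A_e,A_e^+)$ cover $\S_e$ and $\pi$ is profinite — yields a perfectoid space $\S$ with $\S\sim\varprojlim_K S_K^{ad}$; concretely one may realize $\S$ as the fibre product $\S_e\times_k\pi$ in $\Perf_k$ (cf.\ Lemma~\ref{L:product} and Lemma~\ref{L:profinite}), which is manifestly perfectoid.

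**The $\Gamma$-action, the map to $\pi$, and the fibre.** Finally I would equip $\S$ with a $\Gamma$-action: the $\rho_K(g)$ permute the components of each $S_K^{ad}$ and act on each component through the $\Delta$-action on $\S_e$ already built, so they give a compatible family of isomorphisms $\S\to\S$, and \cite{SW} Proposition 2.4.5 upgrades this to an honest right $\Gamma$-action, compatible with the $f_K$. The projection $S_K^{ad}\to(\pi/K)^{ad}$ (send each component to the $K$-orbit indexing it) is $\Gamma$-equivariant and compatible with transitions, so in the limit it produces a $\Gamma$-equivariant morphism of perfectoid spaces $\S\to\pi$. By construction the fibre of $\S\to\pi$ over the base point $e$ is cut out, at each finite level, as the component indexed by the trivial coset, namely $(S_e/(K\cap\Delta))^{ad}$; the relevant squares are cartesian, so by Lemma~\ref{L:sub} the fibre is $\sim\varprojlim_K(S_e/(K\cap\Delta))^{ad}$, hence by uniqueness of perfectoid limits (\cite{SW} Proposition 2.4.5) canonically isomorphic to $\S_e$ as a perfectoid space with its $\Delta$-action. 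That $\S\sim\varprojlim_K S_K^{ad}$ recovers the original tower follows since our $S_K^{ad}$ were, up to the fixed identification $\pi\cong\Gamma/\Delta$, the spaces in the given system.

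**Main obstacle.** The only genuinely delicate point is verifying that the limit of the assembled tower $(S_K^{ad})_K$ really is perfectoid rather than merely a diamond or an abstract object of Definition~\ref{def} — i.e.\ checking the stable-uniformity/$\varpi$-adic-completion bookkeeping (Lemmas~\ref{L:existence}, \ref{L:perfectoid-local}) survives the passage from the $\S_e$-tower to the disjoint-union tower over the varying finite sets $\pi/K$. Identifying $\S$ outright with $\S_e\times_k\pi$ sidesteps this, so I would phrase the argument that way; the remaining verifications (equivariance, compatibility of the cartesian squares, the fibre computation) are then formal consequences of \cite{SW} Proposition 2.4.5 and Lemma~\ref{L:sub}.
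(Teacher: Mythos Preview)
Your proposal is correct and follows essentially the same approach as the paper: the paper also defines $\S:=\S_e\times_k\pi$ directly as a product of perfectoid spaces, obtains the $\Delta$-action on $\S_e$ via the uniqueness in \cite{SW} Proposition 2.4.5, and verifies $\S\sim\varprojlim_K S_K^{ad}$ through the finite decomposition $S_K^{ad}=\coprod_{\gamma\in\Delta\setminus\Gamma/K}(S_e/(\gamma K\gamma^{-1}\cap\Delta))^{ad}$. Your exploratory detour through local gluing before arriving at $\S_e\times_k\pi$ is unnecessary (as you yourself note in the final paragraph), and your indexing of the components as $S_e/(K\cap g^{-1}\Delta g)$ is slightly off --- the paper's $S_e/(\gamma K\gamma^{-1}\cap\Delta)$ is the correct stabilizer computation --- but since you immediately observe these are all isomorphic to $(S_e/(K\cap\Delta))^{ad}$, this does not affect the argument.
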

\begin{proof}
	The fact that $\Delta$ acts on $\S_e$ follows by the same argument in the proof of (1) of Proposition \ref{P:connected}.
	We consider the adic space
	\[\S:=\S_e\times_k \pi=\S_e\times_k\Gamma/\Delta.\]Since $\S_e$ and $\pi$ are perfectoid spaces, $\S$ is also a perfectoid space, with underlying topological space
	\[|\S|=\coprod_{i\in \pi}|\S_e|\simeq \varprojlim_K |S_K^{ad}|.\] The condition on rings in Definition \ref{def} can be verified directly by using the relation \[S_K^{ad}=\coprod_{\gamma \in \pi/K\simeq \Delta\setminus \Gamma/K } (S_e/\gamma K\gamma^{-1}\cap \Delta)^{ad}.\]
	The other assertions are clear.
\end{proof}

By the above Propositions \ref{P:connected} and \ref{P:perf-induction}, the following corollary is immediate.
\begin{corollary}\label{C:P}
	Let $(G,X)$ be a Shimura datum. Fix a prime to $p$ level $K^p\subset G(\A_f^p)$. Consider the statement
	
	\emph{$\mathcal{P}(G,X)$: There exists a perfectoid space $S_{K^p}$ over $\C_p$ such that \[S_{K^p}\sim \varprojlim_{K_p}\Sh_{K_pK^p}(G,X)^{ad}.\]}
	Fix a connected component $X^+\subset X$, and consider the statement
	
	\emph{$\mathcal{P}(G^{der},X^+)$: There exists a perfectoid space $S_{K^p}^0$ over $\C_p$ such that \[S_{K^p}^0\sim \varprojlim_{K_p}\Sh_{K_pK^p}^0(G,X)^{ad}.\]}
	
	Then the two statements are equivalent
	\[\mathcal{P}(G,X)\Longleftrightarrow \mathcal{P}(G^{der},X^+).\]
	
\end{corollary}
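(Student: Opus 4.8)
The plan is to prove the equivalence $\mathcal{P}(G,X)\Longleftrightarrow \mathcal{P}(G^{der},X^+)$ by applying Propositions \ref{P:connected} and \ref{P:perf-induction} to the particular projective system $(S_K)_K=\big(\Sh_{K_pK^p}(G,X)\big)_{K_p}$, which carries a continuous right action of $\Gamma:=G(\Q_p)$ as explained in the preceding subsection. The profinite set $\pi$ will be $\Al(K^p)/\Al(K^p)^0=G(\Q_p)/G_0$, the space of geometric connected components (relative to the chosen component $X^+$), with distinguished point $e$ the image of $X^+\times\{e\}$; the stabilizer of $e$ is $\Al(K^p)^0$, and the fiber $S_e$ over $e$ is exactly $\Sh^0_{K^p}(G,X)$, with $S_e/(K_p\cap \Al(K^p)^0)=\Sh^0_{K_pK^p}(G,X)$. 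All the hypotheses of the two propositions (local finite type over $\C_p$, the quotient compatibilities $S_L/(K/L)\simeq S_K$, transitivity of the $\Gamma$-action on $\pi$ with open orbits) are furnished by the theory of geometric connected components reviewed in subsection 3.2, so the two propositions are directly applicable.

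For the forward implication $\mathcal{P}(G,X)\Rightarrow\mathcal{P}(G^{der},X^+)$: assume the perfectoid space $S_{K^p}$ with $S_{K^p}\sim\varprojlim_{K_p}\Sh_{K_pK^p}(G,X)^{ad}$ exists. Apply Proposition \ref{P:connected} with $\mathcal{S}=S_{K^p}$: part (1) gives the $\Gamma$-action, part (2) produces the $\Gamma$-equivariant morphism of perfectoid spaces $\mathcal{S}\ra\pi$, and part (3) says the fiber $\mathcal{S}_e$ over $e$ is a perfectoid space with $\mathcal{S}_e\sim\varprojlim_{K_p}\big(S_e/(K_p\cap\Al(K^p)^0)\big)^{ad}=\varprojlim_{K_p}\Sh^0_{K_pK^p}(G,X)^{ad}$. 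Setting $S^0_{K^p}:=\mathcal{S}_e$ then establishes $\mathcal{P}(G^{der},X^+)$; note that, as emphasized after Corollary \ref{C:connected} and in Remark \ref{R:connected}, the resulting connected object depends only on the triple $(G^{ad},G^{der},X^+)$, which is why the statement is phrased in terms of $G^{der}$.

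For the reverse implication $\mathcal{P}(G^{der},X^+)\Rightarrow\mathcal{P}(G,X)$: assume a perfectoid space $S^0_{K^p}$ with $S^0_{K^p}\sim\varprojlim_{K_p}\Sh^0_{K_pK^p}(G,X)^{ad}$ exists. Apply Proposition \ref{P:perf-induction} with $\mathcal{S}_e=S^0_{K^p}$: it yields a right $\Delta$-action (here $\Delta=\Al(K^p)^0$) on $\mathcal{S}_e$, and constructs a perfectoid space $\mathcal{S}=\mathcal{S}_e\times_{\C_p}\pi$ equipped with a $\Gamma$-action and a $\Gamma$-equivariant map $\mathcal{S}\ra\pi$ with $\mathcal{S}\sim\varprojlim_{K_p}S_{K_p}^{ad}=\varprojlim_{K_p}\Sh_{K_pK^p}(G,X)^{ad}$. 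Setting $S_{K^p}:=\mathcal{S}$ gives $\mathcal{P}(G,X)$. In fact by Proposition \ref{P:connected}(3) and Proposition \ref{P:perf-induction} the two constructions are mutually inverse, so one even gets a canonical identification $S_{K^p}\simeq[\Al(K^p)\times S^0_{K^p}]/\Al(K^p)^0$, compatible with the Hecke action of $G(\Q_p)$.

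The main point to check carefully, rather than a genuine obstacle, is that the abstract framework of the paragraph preceding Proposition \ref{P:connected} is correctly instantiated: one must verify that $\pi=G(\Q_p)/G_0$ with its right $G(\Q_p)$-action is a profinite set on which the action is continuous and transitive with open orbits under open compact subgroups (this is part of the theory of connected components, since $G(\Q_p)/G_0$ is a quotient of the profinite $\pi_0$ of the Shimura variety), and that $S_e=\Sh^0_{K^p}(G,X)$ with the stated finite-level quotient descriptions $S_e/(K_p\cap\Al(K^p)^0)=\Sh^0_{K_pK^p}(G,X)$ — these identifications were recorded in subsection 3.2 via the equivariant map $\Sh_{K^p}(G,X)_\C\ra\Al(K^p)/\Al(K^p)^0$. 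Once this bookkeeping is in place, both implications are formal consequences of Propositions \ref{P:connected} and \ref{P:perf-induction}, with no further input needed.
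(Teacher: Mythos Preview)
Your proposal is correct and follows essentially the same approach as the paper: the paper's own proof consists of the single sentence ``By the above Propositions \ref{P:connected} and \ref{P:perf-induction}, the following corollary is immediate,'' and your write-up simply unpacks this, applying Proposition \ref{P:connected} for $\mathcal{P}(G,X)\Rightarrow\mathcal{P}(G^{der},X^+)$ and Proposition \ref{P:perf-induction} for the converse, exactly as intended. One small notational slip: having declared $\Gamma=G(\Q_p)$, the stabilizer $\Delta$ of $e$ inside $\Gamma$ is $G_0$, not $\Al(K^p)^0$ (the latter is the stabilizer in the larger group $\Al(K^p)$); since $\Al(K^p)/\Al(K^p)^0=G(\Q_p)/G_0$ this does not affect the argument, but you should write $K_p\cap G_0$ rather than $K_p\cap\Al(K^p)^0$ for consistency with your choice of $\Gamma$.
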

\begin{remark}\label{R}
In \cite{Mo} 2.10, there is a list (a)-(e) of properties of a statement $\mathcal{P}(G,X)$ for a Shimura datum $(G, X)$. Let $\mathcal{P}(G,X)$ be as in the above corollary. Then the results of \cite{Sch3} imply that it satisfies (a) and (b) of \cite{Mo} 2.10. Our results in this paper imply that $\mathcal{P}(G,X)$ satisfies (c) and (e) of loc. cit.. Finally, it is not hard to verify that the statement  $\mathcal{P}(G,X)$ also satisfies (d) of loc. cit. on products, cf. Lemma \ref{L:product}. Thus $\mathcal{P}(G,X)$ satisfies the list (a)-(e) of \cite{Mo} 2.10.
\end{remark}

Now we assume that $(G, X)$ is of abelian type. By Definition \ref{D:abel} (2), there exists a Shimura datum $(G_1,X_1)$ of Hodge type, together with a central isogeny $G_1^{der}\ra G^{der}$ which induces an isomorphism $(G_1^{ad},X^{ad}_1)\st{\sim}{\ra}(G^{ad},X^{ad})$. These data put us into the situation of the last subsection by setting $(G,X)=(G_2,X_2)$. Fix open compact subgroups $K^p\subset G(\A_f^p)$ and $K_1^p\subset G_1(\A_f^p)$ such that the isogeny induces a map $K_1^p\cap G_1^{der}(\A_f^p)\ra K^p\cap G^{der}(\A_f^p)$. Since $(G_1,X_1)$ is of Hodge type, by Theorem~\ref{thm}, there is a perfectoid Shimua variety $S_{K_1^p}(G_1,X_1)$ such that \[S_{K_1^p}(G_1,X_1)\sim \varprojlim_{K_{1p}}\Sh_{K_{1p}K_1^p}(G_1,X_1)^{ad}.\] Applying Corollary \ref{C:connected}, we get the connected perfectoid Shimura variety $S_{K^p_1}^0(G_1,X_1)$ equipped with an action of $\Al(K^p_1)^0=(\Gamma_0^1)^\wedge\big(\tr{rel}. \tau(G_1^{der})\big)$, where $\Gamma_0^1=\tr{Im}\Big([G^{der}_1(\Q)_+\cap K_1^p]\ra G^{ad}_1(\Q)^+\Big)$, such that \[S_{K^p_1}^0(G_1,X_1)\sim \varprojlim_{K_{1p}}\Sh_{K_{1p}K^p_1}^0(G_1,X_1)^{ad}.\]Moreover, there is an action of $\Al(K^p_1)$ on $S_{K_1^p}(G_1,X_1)$, compatible with the action of $\Al(K^p_1)^0$ on $S_{K^p_1}^0(G_1,X_1)$.
 Consider the group
\[\Delta=\varprojlim_{K_{1p}}\frac{\tr{Im}\Big([G^{der}(\Q)_+\cap \pi(K_{1p}^{der})K^p]\ra G^{ad}(\Q)^+\Big)}{\tr{Im}\Big([G^{der}_1(\Q)_+\cap K_{1p}K_1^p]\ra G^{ad}_1(\Q)^+\Big)}.\]
By Proposition \ref{P:delta}, this is a finite group. For $K_{1p}$ sufficiently small, $\Delta$ acts freely on $\Sh_{K_{1p}K^p_1}^0(G_1,X_1)^{ad}$. Therefore, it also acts freely on $S_{K_1^p}^0(G_1,X_1)$. We want to take the quotient \[S_{K_1^p}^0(G_1,X_1)/\Delta.\] The following proposition says that such a quotient $S_{K^p}^0(G, X)=S_{K_1^p}^0(G_1,X_1)/\Delta$ indeed exists, cf. Corollary \ref{C:Quotient}.
\begin{proposition}\label{P:abelian-connected}
	There exists a perfectoid space $S_{K^p}^0(G, X)$ over $\C_p$, such that \[S_{K^p}^0(G, X)\sim \varprojlim_{K_p}\Sh^0_{K_pK^p}(G,X)^{ad}.\]
 We have a finite \'etale Galois morphism  \[S_{K_1^p}^0(G_1,X_1)\ra S_{K^p}^0(G, X)\]
	with Galois group $\Delta$.
\end{proposition}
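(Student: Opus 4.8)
The plan is to realise $S^0_{K^p}(G,X)$ as the quotient $S^0_{K^p_1}(G_1,X_1)/\Delta$ by a direct application of Corollary~\ref{C:Quotient} (which in turn rests on Proposition~\ref{prop} and Corollary~\ref{C:quotient}). With notation as in the last paragraph of subsection~\ref{S}, I take the two inverse systems indexed by the open compact subgroups $K_{1p}\subset G_1(\Q_p)$ given by $X_{K_{1p}}:=(\Gamma_1\setminus X^+)^{ad}$ on top and $Y_{K_{1p}}:=(\Gamma_2\setminus X^+)^{ad}$ below, where $\Gamma_1=\phi_1(K_{1p}K^p_1\cap G_1^{der}(\Q)_+)$ and $\Gamma_2=\phi_2(\pi(K_{1p}^{der})K^p\cap G^{der}(\Q)_+)$, together with the finite \'etale Galois covers $X_{K_{1p}}\ra Y_{K_{1p}}$ of Galois group $\Delta(K_{1p})=\Gamma_2/\Gamma_1$ obtained by analytifying the covers $\Gamma_1\setminus X^+\ra\Gamma_2\setminus X^+$. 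By Corollary~\ref{C:connected} and Remark~\ref{R:connected}(2) one has $X:=S^0_{K^p_1}(G_1,X_1)\sim\varprojlim_{K_{1p}}X_{K_{1p}}$, and $X$ is perfectoid.

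Next I would verify the hypotheses of Proposition~\ref{prop}. Each $\Gamma_1\setminus X^+$ and $\Gamma_2\setminus X^+$ is a scheme of finite type over $\C_p$, hence its adic space is locally noetherian; the transition maps of both systems are finite \'etale maps between connected Shimura varieties, in particular finite; and the cover map $X_{K_{1p}}\ra Y_{K_{1p}}$ is finite \'etale and Galois with group $\Delta(K_{1p})$, compatibly with the transition maps. By Proposition~\ref{P:delta} the group $\Delta=\varprojlim_{K_{1p}}\Delta(K_{1p})$ is \emph{finite}, so there is $K_{1p}'$ with $\Delta(K_{1p})=\Delta$ for all $K_{1p}\subset K_{1p}'$; restricting the two systems to this cofinal set of indices, all the Galois groups become the constant finite group $\Delta$, which then acts freely on $X$ (as it does so on each $X_{K_{1p}}$ for $K_{1p}$ small), putting us exactly in the situation preceding Corollary~\ref{C:Quotient}.

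The one genuinely substantive point is the cartesianity required in Proposition~\ref{prop}: that for $K_{1p}\subset K_{1p}'$ the square with top row $X_{K_{1p}}\ra X_{K_{1p}'}$ and bottom row $Y_{K_{1p}}\ra Y_{K_{1p}'}$ is cartesian. Both $X_{K_{1p}}\ra Y_{K_{1p}}$ and the base change $Y_{K_{1p}}\times_{Y_{K_{1p}'}}X_{K_{1p}'}\ra Y_{K_{1p}}$ are torsors under the same constant group $\Delta$, and the universal property of the fibre product furnishes a $\Delta$-equivariant $Y_{K_{1p}}$-morphism from the first to the second, which is therefore an isomorphism; this is precisely the ``$\Delta$-torsor'' compatibility spelled out at the end of subsection~\ref{S}. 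I expect this to be the main obstacle, but only in a bookkeeping sense: once the Galois groups have stabilised it is formal, and it is exactly here that the finiteness of $\Delta$ from Proposition~\ref{P:delta} is indispensable --- without it the maps $X_{K_{1p}}\ra Y_{K_{1p}}$ would not be covers by a single fixed group and Corollary~\ref{C:Quotient} would not apply.

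Finally, Corollary~\ref{C:Quotient} produces an affinoid perfectoid covering of $X$ by $\Delta$-stable opens, so that by Corollary~\ref{C:quotient} the quotient $Y:=X/\Delta$ exists as a perfectoid space over $\C_p$, the map $X\ra Y$ is a finite \'etale Galois cover with group $\Delta$ (Proposition~\ref{P:quotient}(2)), and $Y\sim\varprojlim_{K_{1p}}Y_{K_{1p}}$. By Remark~\ref{R:connected}(2) and the discussion of subsection~\ref{S}, the system $(\Gamma_2\setminus X^+)$ is cofinal in $(\Sh^0_{K_pK^p}(G,X))_{K_p}$, so $\varprojlim_{K_{1p}}Y_{K_{1p}}$ and $\varprojlim_{K_p}\Sh^0_{K_pK^p}(G,X)^{ad}$ have the same limit in the sense of Definition~\ref{def}; setting $S^0_{K^p}(G,X):=Y$ then gives both the asserted equivalence $S^0_{K^p}(G,X)\sim\varprojlim_{K_p}\Sh^0_{K_pK^p}(G,X)^{ad}$ and the finite \'etale Galois cover $S^0_{K^p_1}(G_1,X_1)\ra S^0_{K^p}(G,X)$ with Galois group $\Delta$.
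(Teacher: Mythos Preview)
Your proposal is correct and follows essentially the same approach as the paper: both set up the two inverse systems $X_{K_{1p}}\to Y_{K_{1p}}$ from the end of subsection~\ref{S}, invoke the finiteness of $\Delta$ from Proposition~\ref{P:delta} to stabilise the Galois groups, and then apply Corollary~\ref{C:Quotient} (built on Proposition~\ref{prop}) to obtain the perfectoid quotient together with the required $\sim$-relation. If anything, you are more careful than the paper in spelling out the cartesianity check via the $\Delta$-torsor argument and the cofinality identification $\varprojlim_{K_{1p}}Y_{K_{1p}}\simeq\varprojlim_{K_p}\Sh^0_{K_pK^p}(G,X)^{ad}$, both of which the paper leaves implicit.
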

\begin{proof}
	We verify that the conditions in Proposition \ref{prop} and Corollary \ref{C:Quotient} hold. By construction, the action of $\Delta$ on $S_{K_1^p}^0(G_1,X_1)$ comes from a system of finite \'etale Galois cover $X_{K_{1p}}\ra Y_{K_{1p}}$ with Galois group $\Delta(K_{1p})$, such that \[\Sh_{K_1^p}^0(G_1, X_1)=\varprojlim_{K_{1p}}X_{K_{1p}},\quad \Delta=\varprojlim_{K_{1p}}\Delta(K_{1p}),\] see the last paragraph in the subsection \ref{S}. By the description there, for each $K_{1p}\subset G_1(\Q_p)$, we have a $\Delta(K_{1p})$-torsor $X_{K_{1p}}\ra Y_{K_{1p}}$ of varieties over $\C_p$. By GAGA, the morphism of the associated adic spaces $X_{K_{1p}}^{ad}\ra Y_{K_{1p}}^{ad}$ is a $\Delta(K_{1p})$-torsor. As $S_{K^p_1}^0(G_1,X_1)\sim \varprojlim_{K_{1p}}X_{K_{1p}}^{ad}$, and $\Delta$ is finite, we see that the conditions in Proposition \ref{prop} and Corollary \ref{C:Quotient} hold true. Therefore,  $S_{K^p}^0(G, X):=S_{K_1^p}^0(G_1,X_1)/\Delta$ is a perfectoid space, \[S_{K^p}^0(G, X)\sim\varprojlim_{K_{1p}}Y_{K_{1p}}^{ad}=\varprojlim_{K_p}\Sh^0_{K_pK^p}(G,X)^{ad},\] and we have a finite \'etale Galois morphism $S_{K_1^p}^0(G_1,X_1)\ra S_{K^p}^0(G, X)$ with Galois group $\Delta$.
\end{proof}

Finally, we can prove the following theorem, which asserts that Shimura varieties of abelian type with infinite level at $p$ are perfectoid. In the next subsection we will discuss the Hodge-Tate period map.
\begin{theorem}\label{S:thm}
	Assume that $(G,X)$ is an abelian type Shimura datum.
There is a perfectoid space $S_{K^p}=S_{K^p}(G,X)$ over $\C_p$ such that
\[S_{K^p}\sim \varprojlim_{K_p}\Sh_{K_pK^p}(G,X)^{ad},\]where $\Sh_{K_pK^p}(G,X)^{ad}$ is the adic space associated to $\Sh_{K_pK^p}$ over $\C_p$.
\end{theorem}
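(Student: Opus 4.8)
The plan is to derive the theorem from Scholze's result on Hodge type Shimura varieties (Theorem~\ref{thm}) together with the \'etale-quotient and induction constructions of Sections~2 and~3, in four steps. First, since $(G,X)$ is of abelian type, I fix a Shimura datum $(G_1,X_1)$ of Hodge type and a central isogeny $G_1^{der}\to G^{der}$ inducing an isomorphism $(G_1^{ad},X_1^{ad})\st{\sim}{\lra}(G^{ad},X^{ad})$ (one may take the datum of Remark~\ref{R:abelian}, though that is not needed), and I choose $K_1^p\subset G_1(\A_f^p)$ so that the isogeny carries $K_1^p\cap G_1^{der}(\A_f^p)$ into the fixed level $K^p\cap G^{der}(\A_f^p)$. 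This places us in the situation of subsection~\ref{S} with $(G_2,X_2)=(G,X)$.

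Second, Theorem~\ref{thm} provides the perfectoid Shimura variety $S_{K_1^p}(G_1,X_1)\sim\varprojlim_{K_{1p}}\Sh_{K_{1p}K_1^p}(G_1,X_1)^{ad}$, and Corollary~\ref{C:connected} extracts from it the open and closed connected perfectoid subspace $S^0_{K_1^p}(G_1,X_1)$ with $S^0_{K_1^p}(G_1,X_1)\sim\varprojlim_{K_{1p}}\Sh^0_{K_{1p}K_1^p}(G_1,X_1)^{ad}$, carrying its natural action of $\mathcal{A}(K_1^p)^0$. Third, and this is the heart of the argument, I form the group $\Delta$ attached to $(G_1,X_1)\to(G,X)$ and the chosen levels. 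By Proposition~\ref{P:delta} it is \emph{finite}, and after shrinking $K_{1p}$ it acts freely on $\Sh^0_{K_{1p}K_1^p}(G_1,X_1)$, hence freely on $S^0_{K_1^p}(G_1,X_1)$; the action arises from a compatible tower of finite \'etale $\Delta(K_{1p})$-torsors $X^{ad}_{K_{1p}}\to Y^{ad}_{K_{1p}}$ of locally noetherian adic spaces over $\C_p$ with $\varprojlim_{K_{1p}}\Delta(K_{1p})=\Delta$. Proposition~\ref{prop} and Corollary~\ref{C:Quotient} then apply --- this is exactly Proposition~\ref{P:abelian-connected} --- to give a perfectoid space $S^0_{K^p}(G,X):=S^0_{K_1^p}(G_1,X_1)/\Delta$ over $\C_p$, a finite \'etale Galois cover $S^0_{K_1^p}(G_1,X_1)\to S^0_{K^p}(G,X)$ with group $\Delta$, and $S^0_{K^p}(G,X)\sim\varprojlim_{K_p}\Sh^0_{K_pK^p}(G,X)^{ad}$; in the language of the introduction this is the statement $\mathcal{P}(G^{der},X^+)$.

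Fourth, I pass from the connected space to the full Shimura variety by Corollary~\ref{C:P}: applying Proposition~\ref{P:perf-induction} with $\Gamma=\mathcal{A}(K^p)$, $\pi=\mathcal{A}(K^p)/\mathcal{A}(K^p)^0$, the stabilizer $\mathcal{A}(K^p)^0$ in the role of ``$\Delta$'', and $\mathcal{S}_e=S^0_{K^p}(G,X)$, one obtains a perfectoid space $S_{K^p}=S_{K^p}(G,X)$ over $\C_p$, realized as $[\mathcal{A}(K^p)\times S^0_{K^p}(G,X)]/\mathcal{A}(K^p)^0$, with $S_{K^p}\sim\varprojlim_{K_p}\Sh_{K_pK^p}(G,X)^{ad}$, as claimed.

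I expect no serious obstacle at this final stage: the genuinely difficult inputs are already isolated --- Scholze's Hodge type theorem, the existence of \'etale quotients for free actions of finite groups on perfectoid spaces (Proposition~\ref{P:quotient}, resting on the Kedlaya--Liu descent of perfectoidness along faithfully finite \'etale covers), and above all the \emph{finiteness} of $\Delta$ (Proposition~\ref{P:delta}, proved via the $p$-integral formalism at an auxiliary prime $\ell\neq p$). Granting these, the theorem is a matter of assembly. The one point still requiring care is organizational: one must check that $\Delta$ acts freely already at finite level and that the tower of finite \'etale covers is compatible with the transition maps, so that Corollary~\ref{C:Quotient} legitimately identifies $S^0_{K_1^p}(G_1,X_1)/\Delta$ with $\varprojlim_{K_p}\Sh^0_{K_pK^p}(G,X)^{ad}$ --- this is what makes the third step the crux.
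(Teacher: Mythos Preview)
Your proposal is correct and follows essentially the same approach as the paper: the paper's proof is the single sentence ``This is a direct consequence of our Propositions~\ref{P:abelian-connected} and~\ref{P:perf-induction}'', and your four steps are precisely an unpacking of those two propositions (together with their inputs --- Scholze's theorem, Corollary~\ref{C:connected}, the finiteness of $\Delta$, and Corollary~\ref{C:Quotient}). The organizational caveat you flag at the end is exactly what the paper handles inside the proof of Proposition~\ref{P:abelian-connected}.
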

\begin{proof}
This is a direct consequence of our Propositions \ref{P:abelian-connected} and \ref{P:perf-induction}.
\end{proof}
We remark that, by \cite{SW} Proposition 2.4.5, the perfectoid space $S_{K^p}$ in the above theorem does not depend on the choices of the Hodge type Shimura datum $(G_1, X_1)$ and the prime to $p$ level $K_1^p$.

Going through the arguments that we used, combined with the results of \cite{MS} section 3, we get the following corollary.
\begin{corollary}\label{C:shim}
Let the situation be as in Corollary \ref{C:P}. Then to prove the statement $\mathcal{P}(G, X)$, it suffices to prove the case for a Shimura datum $(G, X)$ with $G^{der}$ simply connected.
\end{corollary}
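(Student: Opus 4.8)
The plan is to reduce the statement $\mathcal{P}(G,X)$ for a general Shimura datum to the case where $G^{der}$ is simply connected, using Corollary \ref{C:P} together with the results of \cite{MS} section 3 on connected Shimura data. First I would recall, following \cite{MS} section 3, that the statement $\mathcal{P}(G^{der},X^+)$ depends only on the adjoint datum $(G^{ad}, X^{ad})$ together with the covering $G^{der}\to G^{ad}$, in the sense that the connected Shimura variety $\Sh^0_{K^p}(G,X)$ is built from the triplet $(G^{ad}, G^{der}, X^+)$ exactly as in subsection 3.1. So by Corollary \ref{C:P}, proving $\mathcal{P}(G,X)$ is the same as proving $\mathcal{P}(G^{der}, X^+)$, and the latter is really a statement attached to the pair $(G^{ad}, G^{der})$ and a connected component $X^+$.

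The key step is then to compare $G^{der}$ with the simply connected cover. Let $G^{sc}\to G^{der}$ be the simply connected central cover, so that $(G^{ad}, G^{sc}, X^+)$ is again a triplet of the type considered in subsection 3.1.2 (via \cite{D} 2.1.1), and there is a Shimura datum, say $(G_2, X_2)$, with $G_2^{der} = G^{sc}$ — concretely one can take $G_2 = G^{sc}$ itself if it admits a datum, or more robustly invoke \cite{MS} section 3 to produce a convenient datum whose derived group is $G^{sc}$. Now I would apply the construction of subsection 3.2: the central isogeny $G^{sc} = G_2^{der}\to G^{der}$ inducing the identity on adjoint data places us in the setting of Proposition \ref{P:delta} (with the roles $(G_1,X_1) \leftrightarrow (G_2,X_2)$ and $(G_2,X_2)\leftrightarrow (G,X)$ suitably matched). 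For a sufficiently small prime-to-$p$ level, we obtain a \emph{finite} group $\Delta$ acting freely on the connected Shimura variety attached to $G^{sc}$, with quotient the connected Shimura variety attached to $G^{der}$, at every finite level $K_p$.

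Granting $\mathcal{P}(G_2, X_2)$ — equivalently $\mathcal{P}(G^{sc}, X^+)$ — in the simply connected case, Corollary \ref{C:P} gives a perfectoid space $S^0_{K^p_2}$ with $S^0_{K^p_2}\sim\varprojlim_{K_p}\Sh^0_{K_pK^p_2}(G_2,X_2)^{ad}$, equipped with a free action of the finite group $\Delta$ coming from the system of finite \'etale Galois covers at finite level. By Proposition \ref{prop} and Corollary \ref{C:Quotient}, the quotient $S^0_{K^p} := S^0_{K^p_2}/\Delta$ exists as a perfectoid space and satisfies $S^0_{K^p}\sim\varprojlim_{K_p}\Sh^0_{K_pK^p}(G,X)^{ad}$; this is exactly the argument of Proposition \ref{P:abelian-connected}, now run with $(G_1,X_1)$ replaced by $(G_2,X_2)$ (which need not be of Hodge type, but that hypothesis was only used to \emph{input} $\mathcal{P}(G_1,X_1)$ via Theorem \ref{thm}, and here we are instead \emph{assuming} the simply connected case). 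This establishes $\mathcal{P}(G^{der}, X^+)$, hence $\mathcal{P}(G,X)$ by Corollary \ref{C:P}.

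The main obstacle I expect is the bookkeeping needed to guarantee that one can actually attach a Shimura datum $(G_2, X_2)$ with $G_2^{der}$ simply connected to the given adjoint datum, and to verify that the relevant group $\Delta$ is finite in this generality — this is where \cite{MS} section 3 (in the spirit of \cite{D} 2.7.10--2.7.11 and the finiteness arguments behind Proposition \ref{P:delta}) does the real work. Once the finiteness of $\Delta$ and the free action at finite level are in hand, the perfectoid part is purely formal, being a verbatim application of Proposition \ref{prop}, Corollary \ref{C:Quotient}, Proposition \ref{P:connected} and Proposition \ref{P:perf-induction}. One should also remark, as in the paragraph following Theorem \ref{S:thm}, that the resulting $S_{K^p}$ is independent of the auxiliary choice of $(G_2,X_2)$ and of $K_2^p$ by \cite{SW} Proposition 2.4.5.
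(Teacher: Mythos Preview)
Your proposal is correct and follows essentially the same approach as the paper. The paper's own proof is a single sentence (``Going through the arguments that we used, combined with the results of \cite{MS} section 3''), and you have faithfully unpacked exactly those arguments: Corollary~\ref{C:P} to pass to the connected datum, \cite{MS} section~3 to produce a Shimura datum with simply connected derived group covering the given one, Proposition~\ref{P:delta} for the finiteness of $\Delta$, and then the quotient machinery of Proposition~\ref{prop}, Corollary~\ref{C:Quotient} and Proposition~\ref{P:perf-induction} exactly as in the proof of Proposition~\ref{P:abelian-connected} and Theorem~\ref{S:thm}.
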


Let the situation be as in Theorem \ref{S:thm}. Recall that we have fixed the prime to $p$ level $K^p$. Now let $K^p$ vary, we get a family of perfectoid Shimura varieties $(S_{K^p})_{K^p}$ over $\C_p$. As usual, we get a prime to $p$ Hecke action on this family of perfectoid spaces. More precisely, if $(K^p)'\subset K^p$ be another open compact subgroup, we get a natural morphism $S_{(K^p)'}\ra S_{K^p}$, which is a finite \'etale morphism of perfectoid spaces. For any $\gamma\in G(\A_f^p)$, we get an isomorphism of perfectoid spaces $S_{\gamma^{-1}K^p\gamma}\ra S_{K^p}$. In particular, for $\gamma\in G(\A_f^p)$, we get a prime to $p$ Hecke correspondence of perfectoid spaces
\[\xymatrix{
	& S_{K^p\cap \gamma^{-1}K^p\gamma}\ar[ld]_{p_1}\ar[rd]^{p_2}&\\
	S_{K^p}& &S_{K^p},
	}\]
where $p_1$ is the natural projection, $p_2$ is the composite of the natural projection $S_{K^p\cap \gamma^{-1}K^p\gamma}\ra S_{ \gamma^{-1}K^p\gamma}$ with the isomorphism $S_{\gamma^{-1}K^p\gamma}\simeq S_{K^p}$.

We would like to give a corollary on completed cohomology. Let $(G,X)$ be a Shimura datum such that the associated Shimura varieties $\Sh_K$ are proper (i.e. $G$ is anisotropic modulo center). Fix a tame level $K^p\subset G(\A_f^p)$ and an integer $n\geq 1$. We consider the following cohomology groups
\[H^i(K^p,\Z/p^n\Z)=\varinjlim_{K_p}H^i_{\tr{\'et}}(\Sh_{K_pK^p,\ov{\Q}},\Z/p^n\Z),\quad \wt{H}^i(K^p)=\varprojlim_m\varinjlim_{K_p}H^i_{\tr{\'et}}(\Sh_{K_pK^p,\ov{\Q}},\Z/p^m\Z).\]
\begin{corollary}\label{C:cohomology}
	Let $(G,X)$ be a Shimura datum of abelian type such that the associated Shimura varieties are proper, e.g. the non PEL type quaternionic Shimura vaieties in Example \ref{E:quaternion}. With the above notations, for $i> \tr{dim}\,\Sh_K$, we have
	\[H^i(K^p,\Z/p^n\Z)=\wt{H}^i(K^p)=0.\]
\end{corollary}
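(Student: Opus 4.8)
The plan is to deduce the vanishing of $\wt{H}^i(K^p)$ (and hence of its reductions $H^i(K^p,\Z/p^n\Z)$, which are quotients of it) from the fact that the perfectoid Shimura variety $S_{K^p}$ of Theorem \ref{S:thm} is a perfectoid space of dimension $\dim \Sh_K$, together with Scholze's vanishing theorem for cohomology of proper perfectoid spaces in degrees above the dimension. First I would recall that, in the proper case, Scholze's primitive comparison theorem (from \cite{Sch3}) identifies the completed cohomology $\wt{H}^i(K^p)\otimes_{\Z_p}\mathcal{O}_{\C_p}$ with $H^i_{\tr{\'et}}(S_{K^p},\mathcal{O}_{S_{K^p}}^+/p)$ up to bounded $p$-power torsion, where $H^i_{\tr{\'et}}$ is computed on the perfectoid space $S_{K^p}$ as a limit of the \'etale cohomologies of the finite level adic Shimura varieties. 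Concretely: since $S_{K^p}\sim \varprojlim_{K_p}\Sh_{K_pK^p}(G,X)^{ad}$, one has $H^i_{\tr{\'et}}(S_{K^p},\F_p)=\varinjlim_{K_p}H^i_{\tr{\'et}}(\Sh_{K_pK^p}(G,X)^{ad},\F_p)$, and by the comparison between \'etale cohomology of a variety over $\ov{\Q}_p$ and of its associated adic space over $\C_p$, this equals $\varinjlim_{K_p}H^i_{\tr{\'et}}(\Sh_{K_pK^p,\ov{\Q}},\F_p)=H^i(K^p,\F_p)$.

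The key step is then the geometric input: $S_{K^p}$ is a perfectoid space which is \emph{proper} over $\C_p$ (because each $\Sh_{K_pK^p}$ is, as $G$ is anisotropic mod center) and has cohomological dimension $\leq \dim\Sh_K = \dim S_{K^p}$ for the \'etale topology with torsion coefficients. I would invoke Scholze's result (cf. \cite{Sch3}, and the general statement for proper perfectoid spaces) that $H^i_{\tr{\'et}}(S_{K^p},\F_p)=0$ for $i>\dim S_{K^p}$. Combined with the identification above, this gives $H^i(K^p,\F_p)=0$ for $i>\dim\Sh_K$. Then a short d\'evissage handles $\Z/p^n\Z$-coefficients: the short exact sequences $0\to\F_p\to\Z/p^{n}\Z\to\Z/p^{n-1}\Z\to 0$ and induction on $n$ give $H^i(K^p,\Z/p^n\Z)=0$ for $i>\dim\Sh_K$; passing to the limit over $n$ (the system being Mittag-Leffler, indeed the transition maps are eventually between zero groups) yields $\wt{H}^i(K^p)=\varprojlim_n H^i(K^p,\Z/p^n\Z)=0$ in the same range.

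The main obstacle I anticipate is making precise the comparison $H^i(K^p,\F_p)\cong H^i_{\tr{\'et}}(S_{K^p},\F_p)$ in the abelian type case: one needs that $S_{K^p}$ as constructed here (via the finite quotient $S^0_{K^p_1}(G_1,X_1)/\Delta$ and Deligne's induction in Proposition \ref{P:perf-induction}) genuinely computes the colimit of the finite-level cohomologies, i.e. that the \'etale site of $S_{K^p}$ is the limit of the \'etale sites of the $\Sh_{K_pK^p}(G,X)^{ad}$. This follows from the general compatibility of \'etale cohomology with the relation $\sim$ of Definition \ref{def} for qcqs adic spaces with affinoid pro-\'etale presentations, essentially \cite{Sch2} Corollary 7.19 / the invariance results in \cite{SW}; since our $S_{K^p}$ is honest and the transition maps are finite \'etale, this applies. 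An alternative, perhaps cleaner route, avoiding re-deriving this in the abelian type setting: note that $S^0_{K^p_1}(G_1,X_1)$ is a proper perfectoid space (Hodge type, Scholze) so its \'etale cohomology vanishes above the dimension, hence so does that of the finite quotient $S^0_{K^p}(G,X)=S^0_{K^p_1}(G_1,X_1)/\Delta$ by the finite \'etale (hence cohomologically trivial in high degrees, via the Hochschild–Serre spectral sequence for the finite group $\Delta$) cover, and finally that of $S_{K^p}$, which is a disjoint union (over $\pi_0$) of copies of such quotients by Corollary \ref{C:connected}/Proposition \ref{P:perf-induction}; then transfer back to finite levels. Either way the real content is Scholze's cohomological bound for proper perfectoid spaces, and everything else is formal.
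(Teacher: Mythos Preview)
Your proposal is correct and follows essentially the same route as the paper: the argument is borrowed from Scholze (Corollary~6.2 of \cite{Sch5}, Theorem~IV.2.1 of \cite{Sch3}), the only new input being Theorem~\ref{S:thm} that $S_{K^p}\sim\varprojlim_{K_p}\Sh_{K_pK^p}^{ad}$ now holds for abelian type. The paper phrases the key step slightly differently than you do: rather than invoking a black-box statement ``$H^i_{\tr{\'et}}(S_{K^p},\F_p)=0$ for $i>\dim$'', it passes directly to $\mathcal{O}^+/p^n$-coefficients, uses \cite{Sch1}~Corollary~7.18 to get $\varinjlim_{K_p}H^i_{\tr{\'et}}(\Sh_{K_pK^p}^{ad},\mathcal{O}^+/p^n)=H^i_{\tr{\'et}}(S_{K^p},\mathcal{O}^+_{S_{K^p}}/p^n)$, then the almost equality with analytic cohomology and the \v{C}ech bound from a finite affinoid cover (this is where properness enters). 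Your d\'evissage from $\F_p$ to $\Z/p^n$ is fine but unnecessary once you work with $\mathcal{O}^+/p^n$ throughout; conversely, the passage from vanishing of $\mathcal{O}^+/p^n$-cohomology back to $\Z/p^n$-cohomology uses the primitive comparison at finite levels, which you correctly identify. Your alternative route via $S^0_{K^p_1}(G_1,X_1)\to S^0_{K^p}(G,X)$ and Hochschild--Serre for $\Delta$ is not needed and the paper does not take it.
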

\begin{proof}
Identical to the proof of Corollary 6.2 in \cite{Sch5}; see also Theorem IV. 2.1 in \cite{Sch3} and Theorem 17.2 in \cite{Sch6}. The key point is that under our assumption, by Theorem \ref{S:thm} $S_{K^p}\sim \varprojlim_{K_p}\Sh_{K_pK^p}^{ad}$, which gives
\[\varinjlim_{K_p}H^i_{\tr{\'et}}(\Sh_{K_pK^p}^{ad},\mathcal{O}^+_{\Sh_{K_pK^p}^{ad}}/p^n)=H^i_{\tr{\'et}}(S_{K^p},\mathcal{O}^+_{S_{K^p}}/p^n),\]
cf. \cite{Sch1} Corollary 7.18. Then, the \'etale cohomology on the right hand is almost equal to the coherent cohomology $H^i_{\tr{an}}(S_{K^p},\mathcal{O}^+_{S_{K^p}}/p^n)$, and the latter admits the standard bound $\tr{dim}\,\Sh_K$ for the cohomological dimension. We refer to the proofs of Corollary 6.2 in \cite{Sch5} and of Theorem IV. 2.1 in \cite{Sch3} for more details.
\end{proof}
We can also give a generalization of Theorem IV. 3.1 of \cite{Sch3} to the above situation, which says roughly that all Hecke eigenvalues appearing in $\wt{H}^i(K^p)$ come via $p$-adic interpolation from Hecke eigenvalues in the space of classical automorphic forms. However, we will not do this task at this moment. As remarked in the introduction, we will prefer to first extend our Theorem \ref{S:thm} to some suitable compactifications (e.g. the minimal compactification). Then we will generalize our Corollary \ref{C:cohomology} and the Theorem IV. 3.1 of \cite{Sch3} in that setting.

\subsection{The Hodge-Tate period map}
As in the last subsection, let $(G,X)$ be a Shimura datum of abelian type. We take a Shimura datum $(G_1, X_1)$ of Hodge type, together with a central isogeny $G_1^{der}\ra G^{der}$ which induces an isomorphism $(G_1^{ad},X^{ad}_1)\st{\sim}{\ra}(G^{ad},X^{ad})$. Let $K^p, K_1^p$ be the prime to $p$ levels as in the last subsection. Recall that for a Shimura datum $(G,X)$, we can associate to it a conjugacy class $\{\mu\}$ of cocharacters $\mu: \G_m\ra G_{\C_p}$, which is in fact defined over the local reflex field inside $\C_p$. Fix a choice $\mu\in\{\mu\}$, and let $P_\mu$ be the parabolic subgroup of $G_{\C_p}$ which stabilizes the filtration opposite to the usual Hodge filtration attached to $\mu$. One can also define it over $\C$ first directly as \[ P_\mu=\{g\in G_{\C}|\,\lim_{t\ra 0}ad(\mu(t))g \,\mathrm{exists} \},\] where $ad(h)g=hgh^{-1}$ for any $h, g\in G_\C$. Then this parabolic is defined over the local reflex field $E$, and we can pull back it to a parabolic subgroup of $G$ over $\C_p$.  Set \[\Fl_G=(G_{\C_p}/P_\mu)^{ad},\] the adic space associated to the flag variety $G_{\C_p}/P_\mu$ over $\C_p$. For our Shimura datum of abelian type $(G, X)$ and the datum $(G_1, X_1)$ of Hodge type as above, the isomorphism $G^{ad}\simeq G_1^{ad}$ induces an isomorphism of the $p$-adic flag varieties $\Fl_G\simeq \Fl_{G_1}$. By \cite{CS}, there exists a $G_1(\Q_p)$-equivariant Hodge-Tate period map
\[S_{K_1^p}(G_1, X_1)\ra \Fl_{G_1}\]of adic spaces over $\C_p$. In the Siegel case $(G_1, X_1)=(\GSp_{2g}, S^\pm)$, this map sends a $\C_p$-point of $S_{K_1^p}(G_1, X_1)$ to the Hodge-Tate filtration of its associated abelian variety $A$ over $\C_p$ (cf. \cite{Sch3} III.3)
\[0\ra \Lie A(1)\ra T_p(A)\otimes_{\Z_p}\C_p\ra (\Lie A^\vee)^\vee\ra0.\]Here $A^\vee$ is the dual abelian variety of $A$. In the general Hodge type datum case, one should also consider the additional Hodge tensors defining $G$.

Recall the group $\Al(K_1^p)$ defined in the subsection 3.2. By construction, we have a natural projection $\Al(K_1^p)\ra G_1(\Q_p)$. Let $\Al(K_1^p)$ act on $\Fl_{G_1}$ via this projection. Then the Hodge-Tate period map $S_{K_1^p}(G_1, X_1)\ra \Fl_{G_1}$ is $\Al(K_1^p)$-equivariant. We note that the group $\Delta$ defined as in the last subsection acts trivially on $\Fl_{G_1}$. Let $M_\mu\subset P_\mu$ be the centralizer of $\mu$, which is a Levi subgroup of $P_\mu$. Let $\tr{Rep}\,M_\mu$ be the category of algebraic representations of $M_\mu$ over $\C_p$. The Shimura variety with level $K_pK^p$ (over $\C_p$) associated to the datum $(G, X)$ will be denoted simply by $\Sh_{K_pK^p}$, and the perfectoid Shimura varieties will be denoted by $S_{K^p}$ and $S^0_{K^p}$. For expositional simplicity, we assume that the maximal $\Q$-anisotropic $\R$-split subtorus $Z_s$ of the center $Z$ of $G$ is trivial. (Otherwise, we shall assume instead that all representations we consider below have trivial restrictions to $Z_s$, cf. \cite{Mi1} III.8.)
\begin{theorem}\label{HT}
\begin{enumerate}
\item There is a $G(\Q_p)$-equivariant map of adic spaces
\[\pi_{\emph{HT}}: S_{K^p}\lra \Fl_G,\]compatible with the construction in \cite{CS} in the case $(G, X)$ is of Hodge type. The map $\pi_{\emph{HT}}$ is invariant for the prime to $p$ Hecke action on $S_{K^p}$, and does not depend on any other choices.
\item The pullbacks of automorphic vector bundles over finite level Shimura varieties to $S_{K^p}$ can be understood by the map $\pi_{\emph{HT}}$. More precisely, there is an isomorphism of tensor functors \[\mathrm{Rep}\,M_\mu\ra \{G(\A_f)-\mathrm{equiv.\, vector\, bundles\, on\,} S_{K^p}\}\] given by
    \[\xymatrix{
    \mathrm{Rep}\,M_\mu\ar[r]\ar[d]&\{G(\Q_p)-\mathrm{equiv.\, vector\, bundles\, on\,} \Fl_G\}\ar[d]^{\pi_{\emph{HT}}^\ast}\\
\{\mathrm{Auto.\, vector\, bundles\, on\,} \Sh_{K_pK^p}\}\ar[r] &\{G(\A_f)-\mathrm{equiv.\, vector\, bundles\, on\,} S_{K^p}\}.
 }\]Here, the upper horizontal (resp. left vertical)  arrow is the usual construction of vector bundles (resp. automorphic vector bundles) on $\Fl_G$ (resp. $\Sh_{K_pK^p}$), the lower horizontal arrow is the composition of the GAGA functor (which identifies vector bundles on $\Sh_{K_pK^p}$ and on $\Sh_{K_pK^p}^{ad}$) and the pullback functor $f^\ast_{K_p}$ associated to the projection $f_{K_p}: S_{K^p}\ra \Sh_{K_pK^p}^{ad}$, and finally, the right vertical arrow is the pullback functor $\pi_{\emph{HT}}^\ast$ associated to the Hodge-Tate period map $\pi_{\emph{HT}}$.
\end{enumerate}
\end{theorem}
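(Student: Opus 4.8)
The plan is to reduce everything to the Hodge type case, where the Hodge–Tate period map and its relation to automorphic vector bundles have already been established in \cite{CS} (and \cite{Sch3}), and then to descend along the finite \'etale cover $S_{K_1^p}^0(G_1,X_1) \to S_{K^p}^0(G,X)$ with Galois group $\Delta$, together with the induction formalism of Proposition \ref{P:perf-induction}. Concretely: first I would recall from \cite{CS} the $G_1(\Q_p)$-equivariant map $\pi_{\mathrm{HT}}^{(1)}\colon S_{K_1^p}(G_1,X_1)\to \Fl_{G_1}$, and observe (as noted just before the statement of Theorem~\ref{HT}) that $\Al(K_1^p)$ acts on $\Fl_{G_1}$ through its projection to $G_1(\Q_p)$, that $\pi_{\mathrm{HT}}^{(1)}$ is $\Al(K_1^p)$-equivariant, and crucially that the finite group $\Delta$ — which sits in $\Al(K_1^p)^0$ and lies in the kernel of the projection to $G_1(\Q_p)$ — acts trivially on $\Fl_{G_1}\simeq\Fl_G$. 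Restricting $\pi_{\mathrm{HT}}^{(1)}$ to the open and closed perfectoid subspace $S_{K_1^p}^0(G_1,X_1)\subset S_{K_1^p}(G_1,X_1)$ (Corollary~\ref{C:connected}) gives an $\Al(K_1^p)^0$-equivariant map $S_{K_1^p}^0(G_1,X_1)\to \Fl_{G_1}$ which, by the triviality of the $\Delta$-action on the target, factors through the quotient $S_{K^p}^0(G,X)=S_{K_1^p}^0(G_1,X_1)/\Delta$ (Proposition~\ref{P:abelian-connected}); here one uses that $S_{K^p}^0(G,X)$ represents the \'etale sheaf quotient (Proposition~\ref{P:quotient}(2), Corollary~\ref{C:quotient}), so a $\Delta$-invariant morphism out of $S_{K_1^p}^0(G_1,X_1)$ descends uniquely. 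This produces $\pi_{\mathrm{HT}}^0\colon S_{K^p}^0(G,X)\to \Fl_G$, equivariant for the residual $\Al(K^p)^0$-action.

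Next I would globalize from the connected component to $S_{K^p}$ using the induction $S_{K^p}\simeq [\Al(K^p)\times S_{K^p}^0(G,X)]/\Al(K^p)^0$ of Proposition~\ref{P:perf-induction} (equivalently Proposition~\ref{P:connected}(3)): since $\Fl_G$ carries an $\Al(K^p)$-action through $\Al(K^p)\to G(\Q_p)$ extending the $\Al(K^p)^0$-action compatible with $\pi_{\mathrm{HT}}^0$, the map $\Al(K^p)\times S_{K^p}^0(G,X)\to \Fl_G$, $(\gamma,s)\mapsto \gamma\cdot\pi_{\mathrm{HT}}^0(s)$, is invariant under the $\Al(K^p)^0$-action $(\gamma',s)\gamma=(\gamma^{-1}\gamma',s\gamma)$ and hence descends to $\pi_{\mathrm{HT}}\colon S_{K^p}\to\Fl_G$, which is $G(\Q_p)$-equivariant by construction. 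Compatibility with \cite{CS} in the Hodge type case is immediate since then one may take $(G_1,X_1)=(G,X)$, $K_1^p=K^p$, $\Delta$ trivial. Independence of the choices of $(G_1,X_1)$ and $K_1^p$ follows from the uniqueness of the perfectoid space $S_{K^p}$ (\cite{SW} Proposition 2.4.5) together with the uniqueness, again by \cite{SW} Proposition 2.4.5, of a morphism from a perfectoid space to a fixed adic space compatible with a family of morphisms at finite level — indeed $\pi_{\mathrm{HT}}$ is, at each finite level $\Sh_{K_pK^p}$, the de Rham/Hodge–Tate comparison map, which is intrinsic. Invariance under the prime-to-$p$ Hecke action is then formal: the Hecke correspondence diagram at the end of subsection 3.3 is built from the transition maps $S_{(K^p)'}\to S_{K^p}$ and isomorphisms $S_{\gamma^{-1}K^p\gamma}\to S_{K^p}$, all of which commute with $\pi_{\mathrm{HT}}$ by the same uniqueness argument (and already do at finite level).

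For part (2): the left vertical functor, $V\mapsto \mathcal{E}_{K_pK^p}(V)$ from $\mathrm{Rep}\,M_\mu$ to automorphic vector bundles on $\Sh_{K_pK^p}$, is the classical construction via the compact dual / Borel embedding, and the upper horizontal functor $V\mapsto \mathcal{W}(V)$ from $\mathrm{Rep}\,M_\mu$ to $G(\Q_p)$-equivariant vector bundles on $\Fl_G=(G_{\C_p}/P_\mu)^{ad}$ is the associated-bundle construction along $P_\mu\twoheadrightarrow M_\mu$ (any $M_\mu$-representation inflated to $P_\mu$ yields a $G$-equivariant bundle on $G/P_\mu$). In the Hodge type case the commutativity $f_{K_p}^\ast\mathcal{E}_{K_pK^p}(V)\simeq \pi_{\mathrm{HT}}^\ast\mathcal{W}(V)$, as an isomorphism of tensor functors, is exactly the content of the corresponding result in \cite{CS} (the relative Hodge–Tate decomposition identifies the pullback of the Hodge filtration bundle with the pullback of the tautological filtration on the flag variety). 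To deduce the abelian type case I would pull back the whole square along the finite \'etale cover $S_{K_1^p}^0(G_1,X_1)\to S_{K^p}^0(G,X)$ (and then along the induction to $S_{K^p}$): on $S_{K_1^p}^0(G_1,X_1)$ the identity $f_{K_{1p}}^\ast\mathcal{E}(V)\simeq (\pi_{\mathrm{HT}}^{(1)})^\ast\mathcal{W}(V)$ holds by \cite{CS}, it is $\Delta$-equivariant because $\Delta$ acts trivially on $\Fl_G$ and acts on the source via deck transformations compatibly with the automorphic bundles (the automorphic bundle on $\Sh_{K_pK^p}$ pulls back to the one on $\Sh_{K_{1p}K_1^p}$ since $M_\mu$ only depends on $G^{ad}$, equivalently on $\mu^{ad}$), and by faithfully flat (finite \'etale) descent along $S_{K_1^p}^0(G_1,X_1)\to S_{K^p}^0(G,X)$ — using that vector bundles and their isomorphisms descend — the isomorphism descends to $S_{K^p}^0(G,X)$, and then spreads to $S_{K^p}$ by the induction. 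Functoriality in $V$ and compatibility with tensor products are inherited from the Hodge type case because descent is monoidal.

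The main obstacle I expect is the bookkeeping around the group $\Delta$: one must check carefully that $\Delta$ genuinely acts trivially on $\Fl_{G_1}\simeq \Fl_G$ (this is where the centrality of the isogeny $G_1^{der}\to G^{der}$ and the fact that $\Delta$ lies in the kernel of $\Al(K_1^p)^0\to G_1(\Q_p)$ enter), and that the automorphic vector bundles on $\Sh_{K_pK^p}$ and $\Sh_{K_{1p}K_1^p}$ are compatibly related under the cover $\Sh_{K_{1p}K_1^p}^0\to\Sh_{K_pK^p}^0$ — i.e. that the relevant $M_\mu$ is the same for the two data and the \'etale descent datum on the Hodge type side is $\Delta$-equivariant. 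Once these compatibilities are pinned down, the rest is an application of \cite{SW} Proposition 2.4.5 (for uniqueness of maps of perfectoid spaces) together with finite \'etale descent (Proposition~\ref{P:quotient}, Corollary~\ref{C:quotient}) and the induction of Propositions~\ref{P:connected} and~\ref{P:perf-induction}.
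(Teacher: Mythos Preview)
Your proposal is correct and follows essentially the same route as the paper: restrict the Caraiani--Scholze map to the connected component $S_{K_1^p}^0(G_1,X_1)$, use that $\Delta$ acts trivially on $\Fl_{G_1}\simeq\Fl_G$ to descend to $S_{K^p}^0(G,X)$, then spread out via the induction $S_{K^p}\simeq[\Al(K^p)\times S_{K^p}^0]/\Al(K^p)^0$; for part (2), reduce to connected components and invoke \cite{CS} on the Hodge type cover. Your write-up is in fact more explicit than the paper's about the descent mechanisms (Proposition~\ref{P:quotient}, Corollary~\ref{C:quotient}) and about how the automorphic bundle comparison passes through the finite \'etale quotient.

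One point to tighten: your argument for independence of the choices $(G_1,X_1)$ and $K_1^p$ via \cite{SW} Proposition~2.4.5 does not quite work as stated. That proposition gives uniqueness of the perfectoid space $S_{K^p}$ in the relation $\sim$, not uniqueness of an arbitrary morphism out of it, and $\pi_{\mathrm{HT}}$ does not factor through any finite level $\Sh_{K_pK^p}^{ad}$ (so there is no ``family of morphisms at finite level'' to which to appeal). The paper instead invokes Remark~\ref{R:abelian}: among Hodge type data $(G_1,X_1)$ admissible for $(G,X)$ there is one with maximal derived group, through which any other factors; comparing two choices via this common cover shows the resulting $\pi_{\mathrm{HT}}$'s agree. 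You should replace your uniqueness appeal by this dominance argument (or, alternatively, by an intrinsic characterization of $\pi_{\mathrm{HT}}$ via the de Rham property of the tautological local system, as in Hansen's approach mentioned after the proof).
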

\begin{proof}
Both are deduced from Theorem 2.1.3 in \cite{CS}. 

For (1), we note that, when restricting the map $S_{K_1^p}(G_1, X_1)\ra \Fl_{G_1}$ to the subspace $S_{K_1^p}^0(G_1, X_1)$, it is $\Delta$-equivariant, as the group action of $\Delta$ on $S_{K_1^p}^0(G_1, X_1)$ is induced by the Hecke action. Since $\Delta$ acts trivially on $\Fl_{G_1}$, the map $S_{K_1^p}^0(G_1, X_1)\ra \Fl_{G_1}$  factors through $S_{K^p}^0$. Applying the $\Al(K^p)$-action, we get an extension of $S_{K^p}^0\ra \Fl_{G}=\Fl_{G_1}$ to a map $\pi_{\tr{HT}}: S_{K^p}\lra \Fl_G$, which is $G(\Q_p)$-equivariant. By construction, $\pi_{\tr{HT}}$ is invariant for the prime to $p$ Hecke action on $S_{K^p}$, and it does not depend on the choices of Hodge type Shimura datum $(G_1, X_1)$ and the prime to $p$ level $K_1^p$, cf. Remark \ref{R:abelian}.

For (2), we may reduce the problem to the corresponding one for automorphic vector bundles over connected Shimura varieties (cf. \cite{Mi1}). Then it follows from the result of \cite{CS} for the case of $(G_1, X_1)$.
\end{proof}

At this point, we remark that in \cite{Han} Hansen has recently constructed the Hodge-Tate period map for general Shimura varieties, by working on the diamonds associated to Shimura varieties, and by applying the theorem of Liu-Zhu that the tautological $\Q_p$-local systems on Shimura varieties are de Rham (cf. \cite{LZ} Theorem 1.2). When the Shimura datum $(G, X)$ is of abelian type, our Hodge-Tate period map above $\pi_{\tr{HT}}$ coincides with Hansen's after passing to diamonds, cf. subsection 1.2 of \cite{Han}.

To conclude this subsection, we would like to discuss an example of the Hodge-Tate period map, which is related to the one studied in section 6 of \cite{Sch7}. Let $F$ and $D$ be as in Example \ref{E:quaternion}, and $\Sh_{K_pK^p}^{ad}$ be the adic quaterionic Shimura varieties which are not of PEL (Hodge) type. Fix a tame level $K^p$. For simplicity, assume that $p$ is inert in $F$. Suppose that $D$ is ramified at $p$ and split at only one archimedean place $\infty_F$. Then $\tr{dim}\Sh_K=1$, and the associated $p$-adic flag variety is just the $p$-adic projective line $\mathbb{P}^1_{\C_p}$. Let $S_{K^p}$ be the associated perfectoid Shimura curve of abelian type, and 
\[\pi_{\tr{HT}}: S_{K^p}\ra \mathbb{P}^1_{\C_p}\] be the Hodge-Tate period map constructed by Theorem \ref{HT}. On the other hand, by our assumption that $D$ is ramified at $p$, the theorem of Cerednik (cf. \cite{BZ} Corollary 3.4) on $p$-adic uniformization of Shimura curves implies that
\[S_{K^p}\simeq G(F)\setminus \mathcal{M}_{\tr{Dr}, \infty, \C_p}\times G(\A_{F,f}^p)/K^p,\]where $G=(D')^\times$ is the multiplicative group of another quaternion algebra $D'$ which is locally isomorphic to $D$ at all places outside $p$ and $\infty_F$, and such that $D'_p$ is split while $D'_{\infty_F}$ is ramified. The space $\mathcal{M}_{\tr{Dr}, \infty, \C_p}$ is the perfectoid Drinfeld space associated to $D_p$ over $\C_p$ (cf. \cite{SW} 6.3). By \cite{SW} Proposition 7.1.1, there is a Hodge-Tate period map
\[\pi_{\tr{HT}}^{\mathcal{M}}: \mathcal{M}_{\tr{Dr}, \infty, \C_p}\ra \mathbb{P}^1_{\C_p},\]which induces a map
\[
\pi_{\tr{HT}}^{\Sh}: S_{K^p}\ra \mathbb{P}^1_{\C_p}\] by definition and the construction of $p$-adic uniformization. The following claim confirms the concern in Remark 6.4 of \cite{Sch7}.
\begin{claim}
We have the identity of morphisms from $S_{K^p}$ to $\mathbb{P}^1_{\C_p}$
\[\pi_{\emph{HT}}=\pi_{\emph{HT}}^{\Sh}.\]
\end{claim}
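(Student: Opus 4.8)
The plan is to strip away the group actions, lift the desired identity to the Hodge-type perfectoid Shimura variety $S_{K_1^p}(G_1,X_1)$, and there recognize both maps as ``the Hodge--Tate filtration of a $p$-divisible group'', which over the uniformized locus is furnished by the Drinfeld tower. First, both $\pi_{\tr{HT}}$ and $\pi_{\tr{HT}}^{\Sh}$ are maps $S_{K^p}\to\mathbb{P}^1_{\C_p}=\Fl_G$ that are $G(\Q_p)$-equivariant and invariant for the prime-to-$p$ Hecke action: for $\pi_{\tr{HT}}$ this is Theorem~\ref{HT}(1), and for $\pi_{\tr{HT}}^{\Sh}$ it follows from the compatibility of the Cerednik uniformization with these actions (the factor $G(\A_{F,f}^p)/K^p$ carries the prime-to-$p$ Hecke action, and $G(\Q_p)=D_p^\times$ acts on $\mathcal{M}_{\tr{Dr},\infty,\C_p}$ compatibly with $\pi_{\tr{HT}}^{\mathcal M}$ by \cite{SW}~7.1.1). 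Since $S_{K^p}\simeq[\Al(K^p)\times S_{K^p}^0]/\Al(K^p)^0$ by Corollary~\ref{C:connected} and Proposition~\ref{P:perf-induction}, it suffices to prove the identity after restriction to $S_{K^p}^0$. As $S_{K^p}^0=S_{K_1^p}^0(G_1,X_1)/\Delta$ with $\Delta$ finite (Proposition~\ref{P:abelian-connected}), pulling back along this finite \'etale cover reduces us to an equality of two maps $S_{K_1^p}^0(G_1,X_1)\to\mathbb{P}^1_{\C_p}$: the one coming from $\pi_{\tr{HT}}$ is, by construction, the restriction of the Caraiani--Scholze map of \cite{CS}, while the one coming from $\pi_{\tr{HT}}^{\Sh}$ is the period map attached to the $p$-adic uniformization of the Hodge-type datum $(G_1,X_1)$ --- which is legitimate because $p$-adic uniformization depends only on $G^{\tr{ad}}$ and the local datum at $p$, both unchanged by the central isogeny $G_1^{der}\to G^{der}$.

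The heart of the matter is thus a statement about the Hodge-type Shimura variety $S_{K_1^p}(G_1,X_1)$, and here one exploits that $D$ is ramified at $p$. As in the Cerednik--Drinfeld situation, the \emph{entire} adic space $\Sh_{K_{1p}K_1^p}(G_1,X_1)^{ad}$ is $p$-adically uniformized by the Rapoport--Zink space attached to the local datum at $p$, whose infinite-level tower is, up to twisting by prime-to-$p$ and central data, built from $\mathcal{M}_{\tr{Dr},\infty,\C_p}$. Under this identification the universal abelian variety $A$, with its $\mathcal{O}_D$-action, polarization and Hodge tensors, has $p$-divisible group $A[p^\infty]$ identified --- together with all of its structure --- with the universal $p$-divisible group on the Drinfeld tower. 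Now the Caraiani--Scholze map sends a $\C_p$-point $x$ to the Hodge--Tate filtration on $T_p(A_x)\otimes_{\Z_p}\C_p$ placed inside $\Fl_{G_1}$ via the tensors defining $G_1$ (in the Siegel case this is the filtration recalled before Theorem~\ref{HT}), while $\pi_{\tr{HT}}^{\mathcal M}$ of \cite{SW}~Proposition~7.1.1 sends a point to the Hodge--Tate filtration on the Tate module of the corresponding $p$-divisible group inside $\mathbb{P}^1_{\C_p}$; these are the same filtration on the same $\C_p$-vector space. It then remains only to check that the identification $\Fl_{G_1}\simeq\Fl_G\simeq\mathbb{P}^1_{\C_p}$ used to define $\pi_{\tr{HT}}$ agrees with the one used for the Drinfeld period domain in \cite{SW}~7.1, a routine unwinding: both are governed by the same minuscule cocharacter $\mu$ of the generic fibre $\mathrm{GL}_2$ of $D_p^\times$, and on the Drinfeld side the period domain is by construction the space of lines in $\tr{Lie}$ of the universal formal $\mathcal{O}_D$-module, which is precisely $\Fl_G$.

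To conclude, one descends the equality back down. The identification of $A[p^\infty]$ with the Drinfeld $p$-divisible group is equivariant for the $p$-adic Hecke action, hence for the finite group $\Delta$ of Proposition~\ref{P:delta}; since $\Delta$ acts trivially on $\Fl_G$, the equality just obtained on $S_{K_1^p}^0(G_1,X_1)$ descends to $S_{K^p}^0$. Finally, by the $\Al(K^p)$-equivariance and prime-to-$p$ Hecke invariance recorded in the first step, equality on $S_{K^p}^0$ propagates to all of $S_{K^p}$.

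I expect the real difficulty to be concentrated in the middle step. Although it is morally clear that the Caraiani--Scholze period map is ``computed by the Hodge--Tate filtration of the underlying $p$-divisible group'' and therefore agrees over the uniformized locus with the local Gross--Hopkins/Rapoport--Zink period morphism, making this precise requires carefully matching the two constructions of the $p$-divisible group, the extra Hodge tensors with the Rapoport--Zink level/PEL structure, and the several identifications of flag varieties --- via $\mu$, via the adjoint isomorphism of Remark~\ref{R:abelian}, and via the Drinfeld period domain. A related subtlety is that $S_{K^p}(G,X)$ is a $\Delta$-quotient built from $(G_1,X_1)$, whereas Cerednik's theorem is phrased via the auxiliary quaternion algebra $D'$; one must check that the Rapoport--Zink uniformization of $(G_1,X_1)$ and the $D'$-uniformization of $(G,X)$ are compatible, which again rests on the principle that $p$-adic uniformization sees only the adjoint group and the local data, but deserves to be written out carefully.
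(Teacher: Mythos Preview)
Your overall strategy---reduce to connected components, lift to an auxiliary Hodge-type datum $(G_1,X_1)$, and there compare the Caraiani--Scholze map with the Drinfeld period map via the Hodge--Tate filtration of the $p$-divisible group---is sound and is essentially what the paper does. The key difference is in the \emph{choice} of $(G_1,X_1)$, and this choice is where the paper's argument is substantially cleaner.

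You work with a generic Hodge-type $(G_1,X_1)$ coming from the definition of ``abelian type'', carry along a possibly nontrivial finite group $\Delta$, and then need (i) a $p$-adic uniformization of $\Sh(G_1,X_1)$ by the Drinfeld tower, (ii) a careful matching of Hodge tensors with Rapoport--Zink/PEL structure, and (iii) compatibility of this uniformization with the Cerednik $D'$-uniformization of $(G,X)$. You correctly flag all three as subtleties ``deserving to be written out carefully'', but you do not actually carry them out; in particular your justification ``$p$-adic uniformization depends only on $G^{ad}$ and the local datum'' is a heuristic, not a theorem you can cite for a general Hodge-type $(G_1,X_1)$.

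The paper dissolves all three difficulties at once by a judicious choice of auxiliary datum: it takes a CM extension $E/F$ and sets $G_1$ to be the unitary group attached to $D\otimes_F E$ as in Boutot--Zink. This buys three things simultaneously. First, $G_1^{der}=G^{der}$ on the nose, so $\Delta$ is \emph{trivial} and $S_{K_1^p}^0(G_1,X_1)=S_{K^p}^0(G,X)$; no quotient or descent is needed. Second, $(G_1,X_1)$ is of \emph{PEL} type, so the Caraiani--Scholze Hodge--Tate map is literally the Hodge--Tate filtration of the universal abelian variety's $p$-divisible group---no extra Hodge tensors to match. Third, Boutot--Zink already provides a global $p$-adic uniformization of these unitary Shimura curves by the \emph{same} Drinfeld tower $\mathcal{M}_{\tr{Dr},\infty,\C_p}$, so the comparison with $\pi_{\tr{HT}}^{\mathcal M}$ is immediate from the moduli descriptions on both sides. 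Thus every step you mark as delicate becomes either vacuous or a direct citation. Your route is not wrong, but the paper's shows that exercising the freedom in the choice of $(G_1,X_1)$ (cf.\ Remark~\ref{R:abelian}) is the efficient move here.
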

\begin{proof}
Take a totally imaginary CM extension $E$ of $F$, and consider the simple unitary group $G_1$ over $\Q$ associated to $D\otimes_FE$ (cf. \cite{BZ} p. 47, where it is denoted by $G^\bullet$). $G_1$ sits in the following exact sequence
\[1\ra Z\ra D^\times\times Z_E\ra G_1\ra 1,\]where $Z=Res_{F|\Q}\G_{m,F}, Z_E=Res_{E|\Q}\G_{m,E}$, the map $Z\ra D^\times\times Z_E$ is given by $f\mapsto (f,f^{-1})$, and $D^\times\times Z_E\ra G_1$ is given by $(d, k)\mapsto d\otimes k$. Write $(G_2, X_2)$ as the Shimura datum (of abelian type) associated to $D$. After choosing a morphism $h_E: \mathbb{S}\ra Z_{E, \R}$, we get the conjugacy class $X_1$ of morphisms $\mathbb{S}\ra G_{1\R}$ by the using the above sequence. Then $(G_1, X_1)$ forms a PEL type Shimrua datum. Moreover, we have $G_1^{der}=G_2^{der}$. We choose a prime to $p$ level $K_1^p\subset G_1(\A_f^p)$ constructed from $K_2^p=K^p$ by (3.10) of \cite{BZ}.  By the proof of Theorem \ref{HT}, we can use the Hodge-Tate period map for $S_{K_1^p}(G_1, X_1)$ to define the Hodge-Tate period map
\[\pi_{\tr{HT}}: S_{K_2^p}(G_2, X_2)=S_{K^p}\ra \mathbb{P}^1_{\C_p}.\] In fact, since $G_1^{der}=G_2^{der}$, we have the equality of geometric connected components \[\Sh_{K_{1p}K_1^p}^0(G_1, X_1)_{\C_p}=\Sh_{K_{2p}K_2^p}^0(G_2, X_2)_{\C_p}\] for compatible $K_{1p}K_1^p$ and $K_{2p}K_2^p$. That is, the finite group $\Delta$ in the last subsection is trivial, and \[S_{K_1^p}^0(G_1,X_1)=S_{K_2^p}^0(G_2, X_2).\] For the unitary Shimura curves $\Sh_{K_{1p}K_1^p}^0(G_1, X_1)_{\C_p}$, we have also the global $p$-adic uniformization by $\mathcal{M}_{\tr{Dr}, \infty, \C_p}$ (cf. \cite{BZ} 1.51 and (3.8)). Then, we can reduce the claim to the corresponding identity for the perfectoid unitary Shimura curve $S_{K_1^p}(G_1, X_1)$. For the latter case, one sees easily that the identity holds, by the definition of the Hodge-Tate period map in the PEL case and the construction of $p$-adic uniformization.
\end{proof}

\section{Application to moduli spaces of polarized K3 surfaces}
In this section, we investigate some special (and very interesting) examples of Shimura varieties of abelian type, namely, the Shimura varieties associated to the orthogonal group $\SO(V)$, with $V$ a $\Q$-vector space equipped with a non degenerate quadratic form $Q$ such that $V_\R$ has signature $(2,n)$ for some integer $n\geq 1$. These Shimura varieties appear in Kudla's program on special cycles and their generating series, see \cite{Ku} for example. The case that $n=19$ is closely related to moduli spaces of polarized K3 surfaces. In particular, we can apply results of the last section to prove that the moduli spaces of polarized K3 surfaces with infinite level at $p$ are perfectoid.

\subsection{An example: Shimura varieties of orthogonal type}

Let $R$ be a commutative ring in which 2 is invertible, and $(L,Q)$ be a non-degenerate quadratic space over $R$. We have the associated Clifford algebra $C:=C(L)$ over $R$, equipped with an embedding $L\hookrightarrow C$ and a natural decomposition $C=C^+\oplus C^-$, so that $C^+$ is a sub-algebra of $C$. We define a reductive group scheme $G_1:=\GSpin(L,Q)$ over $R$ as follows. For any $R$-algebra $S$, 
\[G_1(S)=\{x\in (C_S^+)^\times|\, x(L_S)x^{-1}=L_S\}.\] Let $G=\SO(L,Q)$ be the special orthogonal group. Then there is an exact sequence of group schemes over $R$:
\[0\ra \G_m\ra G_1\ra G\ra 0,\]where the natural morphism of group schemes $G_1\ra G$ is given by $g\mapsto (v\mapsto gvg^{-1})$. On the other hand, there is a canonical character, the spinor norm, $\nu: G_1\ra \G_m$. Let $G_1'$ be the kernel of $\nu$, which is usually denoted by $\Spin(L,Q)$. Then $G_1'=G_1^{der}$ is the derived subgroup of $G_1$, which is simply connected, and we have the following exact sequence of groups over $R$
\[0\ra G_1'\ra G_1\ra \G_m\ra 0.\] Moreover, the morphism $G_1\ra G$ induces the following exact sequence of groups over $R$
\[0\ra \mu_2\ra G_1'\ra G\ra 0.\]

Consider the case $R=\Q$. We denote $L=V$ as a vector space over $\Q$. Assume that the quadratic space $(V,Q)$ has signature $(2,n)$ over $\R$ for some $n\geq 1$.  We have the reductive groups $G_1$ and $G$ over $\Q$. Let $X$ be the space of oriented negative definite 2-planes in $V_\R$. The points of $X$ correspond to certain Hodge structures of weight 0 on the vector space $V$, polarized by $Q$, cf. \cite{MP0} 3.1. The pairs $(G_1,X)$ and $(G,X)$ form Shimura data, both of which have reflex field $\Q$ since $n\geq1$. 

For any $\delta\in (C^+)^\times$ such that $\delta^\ast=-\delta$, the form \[(x, y)_\delta\mapsto \tr{tr}(x\delta y^\ast)\]on $C^+$ is symplectic, non-degenerate. Here $\ast$ is the canonical involution on $C$. Thus this form $(\cdot,\cdot)_\delta$ induces an closed embedding of group schemes $G_1\hookrightarrow \GSp(C^+, (\cdot,\cdot)_\delta)$, and an embedding of Shimura data $(G_1, X)\hookrightarrow (\GSp(C^+, (\cdot,\cdot)_\delta), S^{\pm})$. Accordingly, the Shimura datum $(G_1,X)$ is of Hodge type, and $(G,X)$ is of abelian type.

Let $K_1\subset G_1(\A_f)$ be an open compact subgroup. We will assume that $K_1$ is of the form $K_{1p}K^p_1$ with $K_{1p}\subset G_1(\Q_p)$ and $K^p_1\subset G_1(\A_f^p)$. As before, we assume that $K^p_1$ is chosen to be sufficiently small and fixed. We will only let the level $K_{1p}$ at $p$ vary. Let $K\subset G(\A_f)$ be the image of $K_1$ under the above map $G_1\ra G$. It has the form as $K_pK^p$ with $K_p, K^p$ the images of $K_{1,p}, K^p_1$ respectively. Now we get the associated Shimura varieties $\Sh_{K_1}(G_1,X), \Sh_{K}(G,X)$ over $\Q$, and a morphism \[\Sh_{K_1}(G_1,X)\ra \Sh_{K}(G,X).\] We have identifications of complex analytic varieties:
\[\Sh_{K_1}(G_1,X)_\C^{an}=G_1(\Q)_+\setminus X^+\times G_1(\A_f)/K_1,\]
\[\Sh_{K}(G,X)_\C^{an}=G(\Q)_+\setminus X^+\times G(\A_f)/K.\]Since $H^1(k, \G_m)=0$ for any field $k$ of characteristic zero (Hilbert's Theorem 90), we find that the map $\Sh_{K_1}(G_1,X)\ra \Sh_{K}(G,X)$ is a finite \'etale Galois cover with Galois group
\[\Delta(K_{1p})=\Q^\times\setminus\alpha^{-1}(K)/K_1,\]where $\alpha: G_1\ra G $ is the morphism $g\mapsto (v\mapsto gvg^{-1})$ as before, cf. \cite{Mi4} Lemma 4.13. If $K_{1p}'\subset K_{1p}$ is another open compact subgroup, we have the following commutative diagram of morphisms
\[\xymatrix{\Sh_{K_{1p}'K^p_1}(G_1,X)\ar[r]\ar[d]& \Sh_{K_{1p}K^p_1}(G_1,X)\ar[d]\\
	\Sh_{K_p'K^p}(G,X)\ar[r]&\Sh_{K_pK^p}(G,X).
	} \]Moreover, we have a surjective morphism of finite groups $\Delta(K_{1p}')\ra \Delta(K_{1p})$.
Since the weight of $(G,X)$ is defined over $\Q$, by Theorem 3.31 of \cite{Mi2}, the associated Shimura varieties are moduli spaces of some abelian motives. For the Shimura varieties of Hodge type associated to $(G_1, X)$, we have the following list when $n$ is small.
\begin{examples}
	\begin{enumerate}
		\item	In the case that $n$ is small, $(G_1,X)$ is of PEL type: the varieties $\Sh_{K_1}(G_1,X)$ are
		\begin{itemize}
			\item $n=1$, modular curves and Shimura curves,
			\item $n=2$, Hilbert modular surfaces and their quaternionic versions,
			\item $n=3$, Siegel threefolds and their quaternionic versions. 
		\end{itemize}
		\item In the next subsection, we will investigate the case $n=19$ (for $(G, X)$).
	\end{enumerate}
\end{examples}

We keep the notations as above. Consider the varieties $\Sh_{K_1}(G_1,X), \Sh_{K}(G,X)$ over $\C_p$. Recall the spinor norm $\nu: G_1\ra \G_m$. We have the following description of the sets of connected components:
\[\begin{split} \pi_0(\Sh_{K_1}(G_1,X))&=  G_1(\Q)_+\setminus G_1(\A_f)/K_1\\&=\Q^\times \R^\times_{>0}\setminus \A^\times/\nu(K_1)\\&= \Q_{>0}\setminus \A^\times_f/\nu(K_1), \end{split}\]
\[ \pi_0(\Sh_{K}(G,X))= G(\Q)_+\setminus G(\A_f)/K.\]In this special case, the morphism $\alpha$ induces a surjection $G_1(\A_f)\twoheadrightarrow G(\A_f)$ and thus a surjection $\pi_0(\Sh_{K_1}(G_1,X))\twoheadrightarrow \pi_0(\Sh_{K}(G,X))$.
One sees that $\pi_0(\Sh_{K_1}(G_1,X))$ is a $\Delta(K_{1p})$-torsor over $\pi_0(\Sh_{K}(G,X))$.
The map on the connected Shimura varieties
\[\Sh^0_{K_1}(G_1,X)\ra \Sh_{K}^0(G,X) \]
is an isomorphism, since \[\alpha\Big(K_1\cap G_1(\Q)_+\Big)=K\cap G(\Q)_+.\] Using the notation of of the last section, we have
$\Delta=\{id\}$.
By GAGA, the associated map between the corresponding adic spaces
\[\Sh^0_{K_1}(G_1,X)^{ad}\ra \Sh_{K}^0(G,X)^{ad}\] is an isomorphism. Let $K_{1p}$ vary. We get the connected perfectoid Shimura variety $S^0_{K_1^p}(G_1,X)$ such that
\[S^0_{K_1^p}(G_1,X)\sim \varprojlim_{K_{1p}}\Sh^0_{K_{1p}K_1^p}(G_1,X)^{ad}.\]
Moreover,  
we get the perfectoid space $S^0_{K^p}(G,X)$ which is isomorphic to $S^0_{K_1^p}(G_1,X)$ such that
\[S^0_{K^p}(G,X)\sim  \varprojlim_{K_{p}}\Sh^0_{K_{p}K^p}(G,X)^{ad}.\] 
By Proposition \ref{P:perf-induction}, we find the perfectoid Shimura variety
$S_{K^p}(G,X)$ such that \[S_{K^p}(G,X)\sim  \varprojlim_{K_{p}}\Sh_{K_{p}K^p}(G,X)^{ad}.\]

\subsection{Moduli spaces of polarized K3 surfaces and the period map}
In this subsection, we specialize our Shimura varieties in the last subsection further to the case $n=19$, i.e. the case of orthogonal Shimura varieties associated to $\SO(2,19)$. We will discuss the relation with moduli spaces of polarized K3 surfaces, cf. \cite{MP} sections 2 and 4, \cite{Ri2} section 6.

Let $U$ be the hyperbolic lattice over $\Z$ of rank 2, and $E_8$ be the positive quadratic lattice associated to the Dynkin diagram of type $E_8$. Set $N=U^{\oplus3}\oplus E_8^{\oplus2}$, which is a self-dual lattice. Let $d\geq 1$ be an integer. Choose a basis $e, f$ for the first copy of $U$ in $N$ and set
\[L_d=\lan e-df\ran^\bot\subset N.\]This is a quadratic lattice over $\Z$ of discriminant $2d$ and rank 21 (in \cite{Ri2} it is denoted by $L_{2d}$ ). Let $V_d=L_d\otimes\Q$ and $L_d^\vee\subset V_d$ be the dual lattice. Set \[G=\SO(V_d),\] which is isomorphic to the special orthogonal group over $\Q$ of signature $(2, 19)$. Let $K\subset G(\A_f)$ be an open compact subgroup which stabilizes $L_{d,\wh{\Z}}$ and acts trivially on $L_{d,\wh{\Z}}^\vee/L_{d,\wh{\Z}}$. Such compact opens are called \emph{admissible}. We fix an open compact subgroup $K^p$ from now on. We only consider open compact subgroups $K_p\subset G(\Q_p)$ which is contained in the discriminant kernel of $L_{d,\Z_p}$ (i.e. the maximal subgroup of $G(\Q_p)$ which stabilizes $L_{d,\Z_p}$ and acts trivially on $L_{d,\Z_p}^\vee/L_{d,\Z_p}$) with finite index. In particular, $K_pK^p$ is admissible, cf. \cite{Ri2} 5.3. For the reductive group $G$, we have the associated Shimura varieties $\Sh_{K_pK^p}$, which are defined over $\Q$. 

Let $k$ be a field. Recall that a K3 surface $X$ over $k$ is a projective smooth surface over $k$ such that $\Omega^2_{X/k}\simeq \mathcal{O}_X$ and $H^1(X,\mathcal{O}_X)=0$. Recall also that (cf. \cite{Ri2} Definition 1.1.5) a K3 surface $X$ over a scheme $S$ is a proper smooth morphism $f: X\ra S$ of schemes whose geometric fibers are K3 surfaces. We can extend this definition to algebraic spaces: a proper smooth algebraic space $f: X\ra S$ over a scheme $S$ is called a K3 surface if there is an \'etale cover $S'\ra S$ such that the pullback of $f$ to $S'$ is a K3 surface in the above sense.
A primitive polarization (resp. quasi-polarization) on a K3 surface $f: X\ra S$ as above is a global section $\lambda\in \mathrm{Pic}_{X/S}(S)$ such that for every geometric point $\ov{s}$ of $S$ the section $\lambda_{\ov{s}}\in \mathrm{Pic}_{X_{\ov{s}}/k(\ov{s})}(k(\ov{s}))$ is a primitive polarization (quasi-polarization) of $X_{\ov{s}}$. 

Let $\M_{2d}$ (resp. $\M^\ast_{2d}$) be the moduli spaces of K3 surfaces $f: X\ra S$ together with a primitive polarization $\xi$ (resp. quasi-polarization) of degree $2d$ over $\Q$ (in \cite{MP} section 2, these spaces are denoted by $\M_{2d}^\circ$ and $\M_{2d}$ respectively). These are Deligne-Mumford stacks of finite type over $\Q$. The natural map $\M_{2d}\ra \M_{2d}^\ast$ is an open immersion. Moreover, $\M_{2d}$ is separated and smooth of dimension 19 over $\Q$, cf. \cite{Ri2} Theorem 4.3.3, Proposition 4.3.11 and \cite{MP} Proposition 2.2.

Let $(f: \mathcal{X}\ra \M_{2d}, \xi)$ be the universal object over $\M_{2d}$. For any prime $\ell$, we consider the second relative \'etale cohomology $H_\ell^2$ of $\mathcal{X}$ over $\M_{2d}$. This is a lisse $\Z_\ell$-sheaf of rank 22 equipped with a perfect symmetric Poincar\'e pairing $\lan,\ran: H_\ell^2\times H_\ell^2\ra \Z_\ell(-2)$. The $\ell$-adic Chern class $\ch_\ell(\xi)$ of $\xi$ is a global section of the Tate twist $H^2_\ell(1)$ that satisfies $\lan \ch_\ell(\xi),\ch_\ell(\xi)\ran=2d$.  The product \[H^2_{\wh{\Z}}=\prod_{\ell}H_\ell^2\] is a lisse $\wh{\Z}$-sheaf, and the Chern classes of $\xi$ can be put together to get the Chern class $\ch_{\wh{\Z}}(\xi)$ in $H^2_{\wh{\Z}}(1)$. Recall that we have the quadratic lattice $N$ of rank 22 over $\Z$.
\begin{definition}\label{D:level}
Consider the \'etale sheaf over $\M_{2d}$ whose sections over any scheme $T\ra \M_{2d}$ are given by
\[ I(T)=\{ \eta: N\otimes\wh{\Z}\st{\sim}{\ra} H^2_{\wh{\Z},T}(1)\,\tr{isometries},\,\tr{with}\,\eta(e-df)=\ch_{\wh{\Z}}(\xi) \}.\] Let $K\subset G(\A_f)$ be an admissible open compact subgroup. Then $I$ admits a natural action by the constant sheaf of groups $K$. A section $\ov{\eta}\in H^0(T,I/K)$ is called a $K$-level structure over $T$ (in \cite{Ri2} 5.3 it is called a full $K$-level structure). 
\end{definition}
As before, we assume that $K$ is of the form $K_pK^p$.
Let $\M_{2d,K_pK^p}$ (resp. $\M_{2d,K_pK^p}^\ast$) be the relative moduli problem over $\M_{2d}$ (resp. $\M_{2d}^\ast$) which parametrizes $K_pK^p$-level structures. For $K^p$ small enough, these are smooth algebraic spaces. Moreover, the maps 
\[\M_{2d, K_pK^p}\ra \M_{2d},\quad \M_{2d,K_pK^p}^\ast\ra \M_{2d}^\ast\] are finite \'etale. For another admissible $K'=K_p'K^{p'}\subset K$, we have natural finite \'etale projections \[\M_{2d,K'}\ra \M_{2d,K},\quad \M^{\ast}_{2d,K'}\ra \M^\ast_{2d,K}\] as algebraic spaces over $\M_{2d}, \M_{2d}^\ast$ respectively.  When $K'$ is a normal subgroup of $K$, these projections are Galois with Galois group $K/K'$.

For any prime $\ell$, we have the primitive cohomology sheaf \[P_\ell=\lan \ch_\ell(\xi)\ran^\bot \subset H_\ell^2.\]Let $H_B^2$ and $H_{dR}^2$ be the second relative Betti and de Rham cohomology respectively of the universal K3 surface $\mathcal{X}\ra \M_{2d,K_pK^p,\C}^\ast$. We have also the primitive cohomology sheaves
\[P_B=\lan \ch_B(\xi)\ran^\bot \subset H_B^2,\quad P_{dR}=\lan \ch_{dR}(\xi)\ran^\bot\subset H_{dR}^2.\]
Consider $\wt{\M}_{2d,K_pK^p}^\ast\ra \M_{2d,K_pK^p}^\ast$,  the two-fold finite \'etale cover parameterizing isometric trivializations $\det(L_d)\otimes\Z_2\st{\sim}{\ra} \det(P_2)$  of the determinant of the primitive 2-adic cohomology of the universal quasi-polarized K3 surface. We can identify $\wt{\M}_{2d,K_pK^p}^\ast$ with the the space of isometric trivializations $\det(L_d)\st{\sim}{\ra} \det(P_B)$
of the determinant of the primitive Betti cohomology. There is a Hodge-de Rham filtration $F^\bullet P_{dR}$ on $P_{dR}$,  and we have a natural isometric trivialization $\eta: \tr{disc}(L_d)\st{\sim}{\ra} \tr{disc}(P_B)$ and the the tautological trivialization $\beta: \det(L_d)\st{\sim}{\ra}\det(P_B)$. The tuple $(P_B,F^\bullet P_{dR},\eta,\beta)$ gives rise to a natural period map
\[\wt{\M}_{2d,K_pK^p,\C}^\ast\ra \Sh_{K_pK^p,\C},\] cf. \cite{MP} Propositions 4.2 and 3.3. There is a section map $\M_{2d,K_pK^p,\C}\subset \M_{2d,K_pK^p,\C}^\ast\ra \wt{\M}_{2d,K_pK^p,\C}^\ast$, whose composition with the above period map gives us the period map \[\M_{2d,K_pK^p,\C}\lra \Sh_{K_pK^p,\C}.\]
\begin{theorem}\label{Torelli}
The period map
\[\M_{2d,K_pK^p,\C}\lra \Sh_{K_pK^p,\C}\]is defined over $\Q$. Moreover, it is an open immersion.
\end{theorem}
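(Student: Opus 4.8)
The plan is to reduce Theorem~\ref{Torelli} to two classical inputs about K3 surfaces: the algebraicity of the Kuga--Satake construction, which yields the descent of the period map to $\Q$, and the global Torelli theorem over $\C$ together with infinitesimal Torelli, which together yield the open immersion. Since being an open immersion can be tested after the faithfully flat base change $\Q\hookrightarrow\C$, these two assertions can be handled separately.

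I would first treat the descent to $\Q$. The period morphism over $\C$ sends a point to the polarized Hodge structure on the primitive cohomology $P_B$ of the corresponding K3 surface together with its level structure, and the point is that this datum is \emph{motivic}. Concretely, using the relative Clifford algebra of the primitive cohomology $P$ and the two-fold cover $\wt{\M}_{2d,K_pK^p}^\ast$ appearing before the theorem, one constructs in families over $\Q$, following Rizov and Madapusi Pera (cf.\ \cite{Ri2} and \cite{MP} \S4), the relative Kuga--Satake abelian scheme, whose associated $\GSpin(V_d)$-motive realizes the period datum. This produces a morphism over $\Q$ to a Siegel modular variety that factors through the Hodge type Shimura variety $\Sh_{K_{1p}K_1^p}(G_1,X)$ for $G_1=\GSpin(V_d)$, and hence, after composing with $\Sh_{K_{1p}K_1^p}(G_1,X)\to\Sh_{K_pK^p}$, through the canonical model $\Sh_{K_pK^p}$ over its reflex field $\Q$. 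Over $\C$ this algebraic construction recovers the Hodge-theoretic Kuga--Satake variety, so it agrees with the Hodge-theoretic period map of the statement; hence the period map is defined over $\Q$. Alternatively one can argue via absolute Hodge cycles as in \cite{D}: the Kuga--Satake correspondence is absolutely Hodge, hence Galois-equivariant, hence defined over $\Q$.

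Next I would prove the open immersion statement over $\C$. Both $\M_{2d,K_pK^p,\C}$ and $\Sh_{K_pK^p,\C}$ are smooth of dimension $19$. The period map is étale: by infinitesimal Torelli for K3 surfaces its differential, which via Kodaira--Spencer is a cup-product map, is an isomorphism, and since source and target have the same dimension this forces étaleness. The period map is injective on $\C$-points: this is the global Torelli theorem for polarized complex K3 surfaces (Piatetski-Shapiro--Shafarevich, with the customary refinements), to the effect that a Hodge isometry between primitive cohomologies compatible with the polarization classes which can be chosen to respect the ample cones is induced by a unique isomorphism of polarized K3 surfaces; the $K_pK^p$-level structure, together with the restriction to the genuinely polarized locus $\M_{2d}$ rather than the quasi-polarized locus $\M_{2d}^\ast$, removes the remaining ambiguities. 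Finally, an étale morphism of finite type algebraic spaces over a field of characteristic zero which is injective on geometric points is radicial, hence a monomorphism, hence an open immersion (the analogue of \cite{EGA} IV, 17.9.1 for algebraic spaces); this completes the argument.

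The main obstacle is the injectivity input, that is, the global Torelli theorem: it is exactly here that one must invoke the full classical theory of periods of K3 surfaces, including the description of the ample cone in terms of $(-2)$-classes and the surjectivity of the period map onto the relevant period domain. One must also be careful about the quasi-polarized versus polarized distinction --- the open immersion genuinely fails on $\M_{2d,K_pK^p}^\ast$, where the period map contracts the locus on which the quasi-polarization is not ample --- and about the stacky and level-structure bookkeeping. All of these verifications are carried out in detail in \cite{MP} (Corollaries 4.4 and 4.15), to which I would refer for the routine points.
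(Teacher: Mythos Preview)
Your proposal is correct and follows essentially the same approach as the paper: the paper's own proof is simply a pair of citations to Rizov \cite{Ri1} (Theorem 3.9.1 for descent via Kuga--Satake, Proposition 2.4.6 for the open immersion via global Torelli), together with the parallel references to \cite{MP} Corollaries 4.4 and 4.15 that you also invoke. Your write-up just unpacks what those citations contain.
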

\begin{proof}
The first assertion follows from \cite{Ri1} Theorem 3.9.1, and the second follows from loc. cit. Proposition 2.4.6, which is essentially the global Torelli theorem for K3 surfaces: see loc. cit. for more references therein. 
\end{proof}

In the case $K_p=K_{L_d,p}$,  see also \cite{MP} Corollary 4.4 for the above first assertion, where $K_{L_d}\subset G(\wh{\Z})$ is the largest subgroup which acts trivially on the discriminant $L^\vee_d/L_d$, i.e. the largest admissible open compact subgroup. For the second assertion, in the case $K_p=K_{L_d,p}$, see \cite{MP} Corollary 4.15. For the general case of open compact subgroups $K_p\subset K_{L_d,p}$, we note that the following diagram is cartesian:
\[\xymatrix{ \M_{2d, K_pK^p,\C}\ar[r]\ar[d]& \Sh_{K_pK^p,\C}\ar[d]\\
	\M_{2d, K_{L_d,p}K^p,\C}\ar[r]& \Sh_{K_{L_d,p}K^p,\C}.}\]
As a corollary, we see that for $K^p$ small enough, $\M_{2d, K_pK^p}$ is a scheme for any open compact $K_p\subset K_{L_d,p}$. 

\subsection{The perfectoid moduli spaces of polarized K3 surfaces}
Let the notations be as in the last subsection. Fix an embedding $\Q\subset \C_p$. We consider spaces and varieties over $\C_p$. Let $\M_{2d,K_pK^p}^{ad}$ (resp. $\Sh_{K_pK^p}^{ad}$) be the adic space associated to $\M_{2d,K_pK^p}\times\C_p$ (resp. $\Sh_{K_pK^p}\times\C_p$) for $K_p$ and $K^p$ as above. By Theorem \ref{Torelli}, we get an open subsystem $\M_{2d,K_pK^p}^{ad}\subset \Sh_{K_pK^p}^{ad}$. Applying Lemma \ref{L:sub} and Theorem~\ref{S:thm}, we get the following corollary.
\begin{corollary}
There is a perfectoid space $\M_{2d, K^p}$ over $\C_p$ such that
\[\M_{2d,K^p}\sim \varprojlim_{K_p}\M_{2d, K_pK^p}^{ad}.\]
\end{corollary}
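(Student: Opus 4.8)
The plan is to deduce this directly from the general framework already built in the paper, so essentially no new work is needed beyond assembling the pieces. First I would recall the setup: by Theorem~\ref{Torelli}, for each admissible $K_pK^p$ the period map realizes $\M_{2d,K_pK^p}$ as an open subscheme of $\Sh_{K_pK^p}$, and moreover this is compatible with the transition maps, since the cartesian diagram at the end of subsection~4.2 shows $\M_{2d,K_pK^p}=\M_{2d,K_{L_d,p}K^p}\times_{\Sh_{K_{L_d,p}K^p}}\Sh_{K_pK^p}$ for $K_p\subset K_{L_d,p}$. Passing to the associated adic spaces over $\C_p$ via GAGA, we obtain an open immersion $\M_{2d,K_pK^p}^{ad}\hookrightarrow \Sh_{K_pK^p}^{ad}$ for each $K_p$, and these form an open subsystem in the sense of Lemma~\ref{L:sub}: the required base-change identity $\M_{2d,K_pK^p}^{ad}\simeq \M_{2d,K_p'K^p}^{ad}\times_{\Sh_{K_p'K^p}^{ad}}\Sh_{K_pK^p}^{ad}$ for $K_p\subset K_p'$ follows from the analogous identity on schemes (which itself follows from the cartesian square in subsection~4.2).

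Next I would invoke Theorem~\ref{S:thm}: since $(G,X)=(\SO(V_d),X)$ is of abelian type, there is a perfectoid space $S_{K^p}$ over $\C_p$ with $S_{K^p}\sim \varprojlim_{K_p}\Sh_{K_pK^p}(G,X)^{ad}$. Now apply Lemma~\ref{L:sub} with $X=S_{K^p}$, $X_i=\Sh_{K_pK^p}^{ad}$ and $Y_i=\M_{2d,K_pK^p}^{ad}$ the open subsystem. The lemma produces an open subspace $\M_{2d,K^p}\subset S_{K^p}$ with $\M_{2d,K^p}\sim \varprojlim_{K_p}\M_{2d,K_pK^p}^{ad}$; and its final sentence guarantees that since $S_{K^p}$ is perfectoid over the perfectoid field $\C_p$ and the $\M_{2d,K_pK^p}^{ad}$ form an open subsystem, $\M_{2d,K^p}$ is again perfectoid over $\C_p$. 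That yields exactly the asserted statement.

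There is essentially no hard step here: the content has all been front-loaded into Theorem~\ref{S:thm} (perfectoidness of abelian-type Shimura varieties) and into the global Torelli theorem (Theorem~\ref{Torelli}, giving the open immersion). The only point requiring a line of care is verifying the open-subsystem hypothesis of Lemma~\ref{L:sub}, i.e. that the open immersions $\M_{2d,K_pK^p}^{ad}\hookrightarrow \Sh_{K_pK^p}^{ad}$ are compatible under the transition maps in the strong sense that each is the pullback of the one at level $K_{L_d,p}K^p$; this is precisely the cartesian square recorded at the end of subsection~4.2, transported to adic spaces by GAGA. Once that is noted, the corollary is immediate. One could also remark, as the paper does afterward, that the resulting open immersion $\M_{2d,K^p}\subset S_{K^p}$ of perfectoid spaces lets one restrict the Hodge--Tate period map $\pi_{\mathrm{HT}}$ of Theorem~\ref{HT} to $\M_{2d,K^p}$.
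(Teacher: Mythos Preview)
Your proposal is correct and follows essentially the same approach as the paper: invoke Theorem~\ref{Torelli} to get the open subsystem $\M_{2d,K_pK^p}^{ad}\subset \Sh_{K_pK^p}^{ad}$, then apply Theorem~\ref{S:thm} and Lemma~\ref{L:sub}. The paper's proof is a single sentence citing exactly these three ingredients; your version simply spells out the verification of the open-subsystem hypothesis via the cartesian square at the end of subsection~4.2.
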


By construction, we have an open immersion of perfectoid spaces over $\C_p$
\[\M_{2d,K^p}\subset S_{K^p}, \]where $S_{K^p}$ is the perfectoid Shimura variety with prime to $p$ level $K^p$ such that $S_{K^p}\sim \varprojlim_{K_p} \Sh_{K_pK^p}^{ad}$. In particular, the restriction on $\M_{2d,K^p}$ of the Hodge-Tate period map $\pi_{\tr{HT}}$ for $S_{K^p}$ gives rise to a Hodge-Tate period map
\[\pi_{\tr{HT}}: \M_{2d,K^p}\ra \Fl_G,\]which can be understood by the Kuga-Satake construction for K3 surfaces (cf. \cite{Ri1} 5.2), and the Hodge-Tate period map for the perfectoid Shimura variety $S_{K_1^p}(G_1,X)$ (cf. \cite{CS}) for the GSpin Shimura datum $(G_1, X)$. Here as before, $K_1^p\subset G_1(\A_f^p)$ is a suitable prime to $p$ level structure, compatible with $K^p$.

In the proof of the corollary above, the key ingredients that we used are perfectoid Shimura varieties of abelian type and the global Torelli theorem for K3 surfaces. With our Theorem \ref{S:thm} at hand, once we have a suitable global Torelli theorem, we can prove that some other moduli spaces of polarized higher dimensional Calabi-Yau varieties with infinite level at $p$ are perfectoid. For example, we can treat the case of moduli spaces of cubic fourfolds exactly as here, see 5.13 and 5.14 of \cite{MP}. In particular, we get some new interesting examples of perfectoid spaces.

\end{document}